\newtheorem*{Thm*}{Theorem}
\newtheorem*{prop*}{Proposition}
\newtheorem*{cor*}{Corollary}
\newtheorem{Thm}{Theorem}[section]
\newtheorem{Prop}[Thm]{Proposition}
\newtheorem{Cor}[Thm]{Corollary}
\newtheorem{Lem}[Thm]{Lemma}
\numberwithin{equation}{section}
\theoremstyle{definition}
\newtheorem{Conj}[Thm]{Conjecture}
\newtheorem{Def}[Thm]{Definition}
\newtheorem{Ex}[Thm]{Example}
\newtheorem{Rem}[Thm]{Remark}
\newcommand{\C}{\mathbb{C}}
\newcommand{\R}{\mathbb{R}}
\newcommand{\Q}{\mathbb{Q}}
\newcommand{\HH}{\mathbb{H}}
\newcommand{\Z}{\mathbb{Z}}
\newcommand{\N}{\mathbb{N}}
\newcommand{\NN}{\mathcal{N}}
\newcommand{\DD}{\mathcal{D}}
\newcommand{\PP}{\mathcal{P}}
\newcommand{\ZZ}{\mathcal{Z}}
\newcommand{\Stab}{\mathrm{Stab}}
\newcommand{\Hom}{\mathrm{Hom}}
\newcommand{\GL}{\mathrm{GL}}
\newcommand{\SL}{\mathrm{SL}}
\newcommand{\Aut}{\mathrm{Aut}}
\newcommand{\ch}{\mathrm{ch}}
\newcommand{\rank}{\mathrm{rank}}
\newcommand{\tr}{\mathrm{tr}}
\newcommand{\Td}{\mathrm{Td}}
\newcommand{\Cof}{\mathrm{Cof}}
\newcommand{\re}{\mathrm{Re}}
\newcommand{\im}{\mathrm{Im}}
\newcommand{\CY}{\mathrm{CY}}
\newcommand{\Attr}{\mathrm{Attr}}
\begin{document}
\title{Attractor mechanisms of moduli spaces of Calabi--Yau 3-folds}
\author{Yu-Wei Fan \ \ \ Atsushi Kanazawa}
\date{}

\maketitle

\begin{abstract}
We investigate the complex and K\"ahler attractor mechanisms of moduli spaces of Calabi--Yau 3-folds. 
The complex attractor mechanism was previously studied by Ferrara--Kallosh--Strominger, Moore and others in string theory. 
It is concerned with the minimizing problems of the normalized central charges of $3$-cycles and defines a new interesting class of Calabi--Yau 3-folds called, the complex attractor varieties. 
In light of mirror symmetry, we introduce the K\"ahler attractor mechanism and define the K\"ahler attractor varieties. 
The complex and  K\"ahler attractor varieties are expected to possess very rich structures, in particular certain complex and K\"ahler rigidities.
\end{abstract}


\section{Introduction}
Let $X$ be a projective Calabi--Yau 3-fold. 
Let $\pi: \widetilde{\mathfrak{M}}_{\mathrm{Cpx}} \rightarrow \mathfrak{M}_{\mathrm{Cpx}}$ be the universal covering of the complex moduli space $\mathfrak{M}_{\mathrm{Cpx}}$ of $X$. 
The normalized central charge of a $3$-cycle $\gamma \in H_3(X,\Z)$ is defined by 
$$
Z(\Omega_{X_z},\gamma)= e^{\frac{K^B(z)}{2}}\int_\gamma \Omega_{X_z}
$$
where $\Omega_{X_z}$ is a holomorphic volume form of $X_z$ and $K^B(z)$ is the Weil--Petersson potential on $\widetilde{\mathfrak{M}}_{\mathrm{Cpx}}$.   
It induces a function $|Z(-,\gamma)|:\widetilde{\mathfrak{M}}_{\mathrm{Cpx}} \rightarrow \R_{\ge0}$, called the mass function of $\gamma$, 
and we are interested in its stationary points, called the attractors. 
Our investigation is motivated by the study of black holes in string theory (Ferrara--Kallosh--Strominger \cite{FKS}),   
where it is of great interest to find a $3$-cycle $\gamma \in H_3(X,\Z)$ which supports a BPS state.  
Finding stationary points of the mass function $|Z(-,\gamma)|$ is a purely mathematical problem and can be answered in parts by using the attractor mechanism investigated by Moore in his unpublished article \cite{Moo}. 
The Calabi--Yau 3-folds corresponding to the attractors are called the attractor varieties. 
This new class of Calabi--Yau 3-folds are conjectured to posses very rich structures. 

Moore's article \cite{Moo} is full of beautiful insights and he posed many interesting mathematical questions (the attractor conjectures) pertaining to the arithmetic nature of the attractor varieties. 
In fact, the attractor varieties can be considered as a vast generalization of the rigid Calabi--Yau 3-folds.

In light of of mirror symmetry, which is a duality between complex and K\"ahler (symplectic) geometries of distinct Calabi--Yau manifolds, 
a natural question is, what is the mirror of the attractor mechanism? 
In this article, we introduce the K\"ahler attractor mechanism of the K\"ahler moduli space 
and develop parallel theories to the complex side.   
Moreover, the K\"ahler attractor mechanism leads us to the idea of rigid K\"ahler structures, which should be mirror to the rigid complex structures. 
This direction of research is further carried out from the viewpoint of generalized Calabi--Yau geometry in a separate article \cite{Kan}. 

The present work is based on our previous work \cite{FKY}. 
It investigates the A-model Weil--Petersson geometry on the K\"ahler moduli space (or more precisely the space of Bridgeland stability conditions), 
which is supposed to be mirror to the classical Weil--Petersson geometry on the complex moduli spaces \cite{Tia, Tod}. 

The objective of this article is twofold. 
The first is to provide mathematical foundations of the complex attractor mechanism (Section \ref{Complex attractor mechanism}). 
The second is to introduce the K\"ahler attractor mechanism inspired by mirror symmetry (Section \ref{Kahler attractor}).  

\begin{table}[htb]
\begin{center}
  \begin{tabular}{c|c}
    (A-side) & (B-side) \\
    K\"ahler moduli $\mathfrak{M}_{\mathrm{Kah}}$ &  complex moduli $\mathfrak{M}_{\mathrm{Cpx}}$  \\ \hline
    Weil--Petersson metric $g^A$  & Weil--Petersson metric $g^B$  \\ 
    (Fan--Kanazawa--Yau \cite{FKY}) & (Tian \cite{Tia}, Todorov \cite{Tod}) \\ \hline
    K\"ahler attractor mechanism & complex attractor mechanism \\
    (Fan--Kanazawa  [present work]) & (Moore \cite{Moo})
  \end{tabular}
  \end{center}
\end{table}

 \subsection*{Structure of article}
Section \ref{Complex attractor mechanism} provides mathematical foundations of the complex attractor mechanism based on \cite{Moo}. 
Section \ref{WP stability conditions} is a brief review of our previous work \cite{FKY} on the Weil--Petersson geometry by means of the Bridgeland stability conditions. 
Section \ref{Kahler attractor} introduces the K\"ahler attractor mechanism and develops parallel theories to the complex side. 
We finally compare the complex and K\"ahler attractor mechanisms from the view point of mirror symmetry.

\subsection*{Notation and conventions}
Throughout the article, we work over complex numbers $\C$. 
A Calabi--Yau $n$-fold is an $n$-dimensional K\"ahler manifold whose canonical bundle is trivial. 
$\ch(-)$ denotes the Chern character and $\Td_X$ denotes the Todd class of $X$. 
For $R=\Z,\Q,\R$, $H^{i,i}(X,R)$ denotes the intersection $H^{i,i}(X)\cap H^{2i}(X,R)$. 
$\mathfrak{H}_g$ denotes the Siegel upper half-space of degree $g$.

\subsection*{Acknowledgement}
First and foremost, we would like to express our  gratitude to Shing-Tung Yau, who first drew our attention to the attractor mechanism. 
Our thanks also go to Shinobu Hosono and Hiroshi Iritani for very useful discussions. 
A.K. was supported in part by the JSPS Grant-in-Aid Wakate(B)17K17817 and Leading Initiative for Excellent Young Researchers Grant (Kyoto University).   


\section{Complex attractor mechanism} \label{Complex attractor mechanism}

\subsection{Foundation of complex attractor mechanism}

Let $X$ be a projective Calabi--Yau 3-fold and $ \mathfrak{M}_{\mathrm{Cpx}}$ the complex moduli space of $X$. 
We consider the vector bundle $\mathcal{H}=R^3\pi_*\underline{\C} \rightarrow  \mathfrak{M}_{\mathrm{Cpx}}$ 
equipped with a natural Hodge filtration $F^3\mathcal{H} \subset \dots \subset F^0\mathcal{H}$ of weight $3$. 
By the Calabi--Yau condition, the first piece of the filtration defines a holomorphic line bundle $\mathcal{L}=F^3\mathcal{H}\rightarrow  \mathfrak{M}_{\mathrm{Cpx}}$, which we call the Hodge bundle. 
It is classically known that $\mathfrak{M}_{\mathrm{Cpx}}$ carries a natural K\"ahler metric, called the Weil--Petersson metric $g^B$,  
whose K\"ahler potential is given by 
\begin{equation}
K^B(z)=- \log(\sqrt{-1}\int_{X_z} \Omega_{X_z} \wedge \overline{ \Omega_{X_z}}), \label{WP potential1} 
\end{equation}
where $\{\Omega_{X_z}\}_z$ is a nowhere-zero (holomorphic) local section of the Hodge bundle \cite{Tia}. 
We call $K^B$ the Weil--Petersson potential. 
For later use, we introduce the following useful formula for computing $K^B$.  

\begin{Prop}[\cite{FKY}] \label{WPformula}
Assume that there exist formal sums of Lagrangian submanifolds $\{L_i\}$ representing a basis of $H_3(X,\Z)/\mathrm{tor}(H_3(X,\Z))$. 
Then 
\begin{equation}
K^B(z)=- \log (\sqrt{-1} \sum_{i,j} \chi_\mathrm{Fuk}^{i,j}  \int_{L_i}\Omega_{X_z} \int_{L_j}\overline{\Omega_{X_z}}),  \label{WP potential2}  
\end{equation}
where $(\chi_\mathrm{Fuk}^{i,j})=(\chi_{\mathrm{Fuk}}(L_i,L_j))^{-1}$ is 
the inverse matrix for the Euler paring $\chi_\mathrm{Fuk}$ of the Fukaya category $\mathrm{D^bFuk}(X)$. 
\end{Prop}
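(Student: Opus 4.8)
The plan is to compute the Weil--Petersson potential $K^B(z) = -\log\left(\sqrt{-1}\int_{X_z}\Omega_{X_z}\wedge\overline{\Omega_{X_z}}\right)$ by expanding the integral on $X_z$ in terms of period integrals over a basis of $3$-cycles, using the assumed Lagrangian representatives and Poincar\'e duality. First I would fix the basis $\{L_i\}$ of $H_3(X,\Z)/\mathrm{tor}$ and expand the harmonic form $\Omega_{X_z}$ (viewed in $H^3(X,\C)$) in the dual basis: writing $\Omega_{X_z}$ in terms of its periods $a_i(z) = \int_{L_i}\Omega_{X_z}$, one has $\int_{X_z}\Omega_{X_z}\wedge\overline{\Omega_{X_z}} = \sum_{i,j} Q^{i,j}\, a_i(z)\,\overline{a_j(z)}$, where $(Q^{i,j})$ is the inverse of the intersection form on $H_3(X,\Z)/\mathrm{tor}$. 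Substituting into \eqref{WP potential1} immediately gives a formula of the shape of \eqref{WP potential2}; the content of the proposition is the identification $Q^{i,j} = \chi_\mathrm{Fuk}^{i,j}$.

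The key step is therefore to identify the topological intersection pairing on $H_3(X,\Z)$ with the Euler pairing $\chi_\mathrm{Fuk}$ of the Fukaya category $\mathrm{D^bFuk}(X)$ under the map $L \mapsto [L] \in H_3(X,\Z)$. I would invoke the general principle that the Euler characteristic of the Floer cohomology of two transverse compact Lagrangians computes (up to sign conventions) their topological intersection number: $\chi\bigl(HF^\bullet(L_i,L_j)\bigr) = (-1)^{?}\,[L_i]\cdot[L_j]$, which is the categorical shadow of the fact that Floer homology is a deformation of Morse/intersection homology. In the Calabi--Yau $3$-fold case the relevant signs are controlled by the fact that $H_3$ sits in odd degree; one must check that the normalization of $\chi_\mathrm{Fuk}$ adopted in \cite{FKY} matches the intersection form (possibly up to an overall sign, which is harmless since it cancels between the matrix and its inverse, or is absorbed into the $\sqrt{-1}$). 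For the formal-sum (idempotent-completed) setting one extends this pairing bilinearly, which is consistent because both the intersection form and $\chi_\mathrm{Fuk}$ are bilinear on the Grothendieck group.

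Once the identification $\bigl(\chi_\mathrm{Fuk}(L_i,L_j)\bigr) = \bigl([L_i]\cdot[L_j]\bigr)$ (or its negative) is in hand, the proof concludes by a linear-algebra manipulation: the expansion of $\Omega_{X_z}$ against the basis of cycles is governed by the inverse intersection matrix, so the coefficients appearing in $\int_{X_z}\Omega_{X_z}\wedge\overline{\Omega_{X_z}}$ are exactly the entries of $\bigl(\chi_\mathrm{Fuk}(L_i,L_j)\bigr)^{-1} = (\chi_\mathrm{Fuk}^{i,j})$, and \eqref{WP potential2} follows from \eqref{WP potential1}. I expect the main obstacle to be bookkeeping of signs and orientation/grading conventions in passing between the de Rham pairing $\int_X \alpha\wedge\beta$, the topological intersection number of $3$-cycles, and the categorical Euler pairing $\chi_\mathrm{Fuk}$ — in particular making sure the factor of $\sqrt{-1}$ in \eqref{WP potential1} and \eqref{WP potential2} is consistent with the chosen orientations, since a sign error there would flip the sign of the Weil--Petersson potential. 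The existence hypothesis on Lagrangian representatives is used precisely to make the periods $\int_{L_i}\Omega_{X_z}$ and the pairing $\chi_\mathrm{Fuk}(L_i,L_j)$ simultaneously meaningful; no genericity beyond transversality (achievable after Hamiltonian perturbation) is needed.
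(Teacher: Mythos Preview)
The paper does not give its own proof of this proposition; it is quoted from the authors' earlier paper \cite{FKY} and stated here without argument, so there is nothing in the present paper to compare your proposal against directly.

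That said, your outline is exactly the standard argument and is correct in substance. Writing $\Omega_{X_z}=\sum_i a_i(z)\,\eta^i$ in the cohomology basis $\{\eta^i\}$ dual to $\{L_i\}$ (so $a_i(z)=\int_{L_i}\Omega_{X_z}$), one gets $\int_{X_z}\Omega_{X_z}\wedge\overline{\Omega_{X_z}}=\sum_{i,j}\bigl(\int_{X}\eta^i\wedge\eta^j\bigr)a_i\overline{a_j}$, and the matrix $\bigl(\int_X\eta^i\wedge\eta^j\bigr)$ is the inverse of the intersection matrix on $H_3$ up to a sign coming from the skew-symmetry in odd degree. The identification of the intersection pairing with $\chi_{\mathrm{Fuk}}$ is precisely the fact that the Euler characteristic of Lagrangian Floer cohomology equals (up to a universal sign depending on grading conventions) the algebraic intersection number of the underlying cycles; extending bilinearly to formal sums is unproblematic since both pairings descend to the numerical Grothendieck group. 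Your caution about signs is warranted: since the intersection form on $H_3$ of a $6$-manifold is alternating, one has $I^T=-I$ and hence $(\int_X\eta^i\wedge\eta^j)=-(I^{-1})_{ij}$, so a single global sign must be absorbed either into the convention for $\chi_{\mathrm{Fuk}}$ or into the $\sqrt{-1}$ prefactor; this is a matter of matching the conventions of \cite{FKY} rather than a mathematical obstruction.
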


We will develop the A-model Weil--Petersson geometry based upon this new expression (Equation (\ref{WP potential2})). 

\begin{Def}
Let $\gamma \in H_3(X,\Z)$ be a non-trivial $3$-cycle. 
Given an identification $H_3(X,\Z) \cong H_3(X_z,\Z)$, 
we define the normalized central charge of $\gamma$ by 
$$
Z(\Omega_{X_z},\gamma)= e^{\frac{K^B(z)}{2}}\int_\gamma \Omega_{X_z}, 
$$
where by abuse of notation $K^B(z)$ is given by the Equation (\ref{WP potential1}) (it depends not only on $z$ but also on $\Omega_{X_z}$). 
\end{Def}

Hereafter we always assume $\gamma \ne 0$. 
Let $\pi: \widetilde{\mathfrak{M}}_{\mathrm{Cpx}} \rightarrow \mathfrak{M}_{\mathrm{Cpx}}$ be the universal covering. 
Then $Z(-,\gamma)$ is a smooth function on the total space of the pullback $\pi^*\mathcal{L}$ of the Hodge bundle $\mathcal{L}$
$$
Z(-,\gamma): \pi^*\mathcal{L} \longrightarrow \C. 
$$ 
We observe that the absolute value $|Z(\Omega_{X_z},\gamma)|$ is independent of a choice of $\Omega_{X_z} \ne 0$. 
Therefore we obtain a function
$$
|Z(-,\gamma)|: \widetilde{\mathfrak{M}}_{\mathrm{Cpx}} \longrightarrow \R_{\ge 0}. 
$$
We call it the mass function of $\gamma \in H_3(X,\Z)$.

\begin{Thm} \label{attractor eq them}
A stationary point $z \in \widetilde{\mathfrak{M}}_{\mathrm{Cpx}}$ of the mass function 
$|Z(-,\gamma)|: \widetilde{\mathfrak{M}}_{\mathrm{Cpx}} \rightarrow \R_{\ge 0}$ 
with $Z(\Omega_{X_z},\gamma) \ne 0$ is characterized by the equation 
\begin{equation}
\gamma^{PD} = \mathrm{Re} (C\Omega_{X_z}), \ \ \ (\exists  C \in \C) \label{Attractor equation}
\end{equation}
in $H^3(X,\Z)$, where $\gamma^{PD}$ denotes the Poincar\'e dual of $\gamma$. 
We call the Equation (\ref{Attractor equation}) the complex attractor equation. 
It is equivalent to the condition 
$$
\gamma^{PD}  \in H^{3,0}(X) \oplus H^{0,3}(X). 
$$
\end{Thm}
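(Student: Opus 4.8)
The plan is to compute the differential of the mass function directly using the known structure of variations of Hodge structure for Calabi--Yau 3-folds. First I would work upstairs on the pullback bundle $\pi^*\mathcal{L}$ and pick a local nowhere-zero holomorphic section $\Omega_{X_z}$, so that $\int_\gamma \Omega_{X_z}$ is a holomorphic function of $z$ and $Z(\Omega_{X_z},\gamma) = e^{K^B(z)/2}\int_\gamma\Omega_{X_z}$. Since $|Z|$ is what we differentiate, it is convenient to study $\log|Z|^2 = K^B(z) + \log\bigl|\int_\gamma\Omega_{X_z}\bigr|^2$, which is smooth away from the zero locus of $Z(-,\gamma)$; its stationary points coincide with those of $|Z(-,\gamma)|$ on the open set where $Z \neq 0$. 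Taking $\partial_{z_a}$ (holomorphic derivatives in local coordinates $z_a$ on $\widetilde{\mathfrak{M}}_{\mathrm{Cpx}}$), the condition $\partial_{z_a}\log|Z|^2 = 0$ for all $a$ reads
\begin{equation}
\partial_{z_a} K^B(z) + \frac{\int_\gamma \partial_{z_a}\Omega_{X_z}}{\int_\gamma \Omega_{X_z}} = 0, \qquad a = 1,\dots,\dim\widetilde{\mathfrak{M}}_{\mathrm{Cpx}}. \label{statpt}
\end{equation}

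The next step is to express $\partial_{z_a} K^B$ and $\partial_{z_a}\Omega_{X_z}$ intrinsically. Differentiating \eqref{WP potential1}, one gets $\partial_{z_a}K^B = -\bigl(\sqrt{-1}\int_X \partial_{z_a}\Omega_{X_z} \wedge \overline{\Omega_{X_z}}\bigr)\big/\bigl(\sqrt{-1}\int_X \Omega_{X_z}\wedge\overline{\Omega_{X_z}}\bigr)$, so that the combination $D_a\Omega_{X_z} := \partial_{z_a}\Omega_{X_z} + (\partial_{z_a}K^B)\,\Omega_{X_z}$ is the covariant derivative with respect to the Hodge-bundle metric, and by Griffiths transversality it lies in $H^{3,0}\oplus H^{2,1}$ with vanishing $H^{3,0}$-component, i.e. $D_a\Omega_{X_z}\in H^{2,1}(X_z)$. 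Now rewrite the pairing $\int_\gamma(-)$ as a cup product with the Poincaré dual: $\int_\gamma\alpha = \int_X \gamma^{PD}\wedge\alpha$ for $\alpha\in H^3(X)$, with $\gamma^{PD}\in H^3(X,\mathbb{Z})$ a real class. Then \eqref{statpt} becomes, after multiplying through by $\int_\gamma\Omega_{X_z}$,
\begin{equation}
\int_X \gamma^{PD}\wedge D_a\Omega_{X_z} = 0 \qquad \text{for all } a. \label{keyvanish}
\end{equation}

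Finally I would decompose $\gamma^{PD}$ according to the Hodge decomposition $H^3(X,\C) = H^{3,0}\oplus H^{2,1}\oplus H^{1,2}\oplus H^{0,3}$ and use the fact that cup product pairs $H^{p,q}$ nontrivially only with $H^{3-p,3-q}$. Because $\{D_a\Omega_{X_z}\}_a$ spans $H^{2,1}(X_z)$ (the tangent space to the period domain realized via the Kodaira--Spencer map, using that $\Omega_{X_z}$ generates $H^{3,0}$), \eqref{keyvanish} says precisely that the $H^{1,2}$-component of $\gamma^{PD}$ is zero; since $\gamma^{PD}$ is a real class, its $H^{2,1}$-component, being the conjugate, also vanishes. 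Hence $\gamma^{PD}\in H^{3,0}(X)\oplus H^{0,3}(X)$, and writing the $H^{3,0}$-component as $\tfrac12 C\Omega_{X_z}$ for some $C\in\C$ and taking the real part recovers $\gamma^{PD} = \mathrm{Re}(C\Omega_{X_z})$; conversely any $\gamma^{PD}$ of this form makes \eqref{keyvanish} hold, so the equivalence is sharp. The main obstacle I anticipate is purely bookkeeping rather than conceptual: carefully checking that $|Z|$ and $\log|Z|^2$ have the same stationary points where $Z\neq 0$ (the absolute value is not differentiable, so one should either pass to $\log|Z|^2$ as above, or argue that a stationary point of $|Z|$ with $|Z|>0$ is a stationary point of $|Z|^2$ and hence of $Z\bar Z$), and verifying that $\eqref{statpt}$ together with its complex conjugate exhausts the real stationarity condition on the real manifold $\widetilde{\mathfrak{M}}_{\mathrm{Cpx}}$ — i.e. that stationarity in the holomorphic directions alone suffices because $\log|Z|^2$ is pluriharmonic in $\log|\int_\gamma\Omega|^2$ plus the real-analytic $K^B$, so its real and holomorphic critical points coincide.
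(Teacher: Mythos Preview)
Your proof is correct and follows essentially the same strategy as the paper: differentiate $|Z|^2$ and reduce the stationary condition to orthogonality of $\gamma^{PD}$ against $H^{2,1}(X_z)$, then use reality of $\gamma^{PD}$ to kill the $H^{2,1}$-component as well. The only difference is presentational: the paper works in Bogomolov--Tian--Todorov coordinates where $\partial_{\epsilon_i}\Omega'_{X_z}\big|_{\epsilon=0}=\Omega_i\in H^{2,1}$ directly (so $\partial_{\epsilon_i}K^B\big|_{\epsilon=0}=0$ and no covariant derivative appears), whereas you work in arbitrary coordinates and package the same cancellation into the Chern-connection derivative $D_a\Omega_{X_z}$ via Griffiths transversality.
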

\begin{proof} 
By the Bogomolov--Tian--Todorov Theorem, the Kodaira--Spencer map provides an identification between an open neighborhood $U$ of $z \in \widetilde{\mathfrak{M}}_{\mathrm{Cpx}}$ 
and an open neighborhood $U'$ of $0 \in H^1(X,TX)\cong H^{2,1}(X)$.  
Therefore, for a basis $\Omega_1,\dots,\Omega_k$ of $H^{2,1}(X)$, the variation $\Omega'_{X_z}=\Omega_{X_z}+\sum_i\epsilon_i \Omega_i$ gives a local coordinate $\epsilon=(\epsilon_1,\dots,\epsilon_n)$ of $U$. 
Then a straightforward calculation shows
\begin{align}
\left.\frac{\partial}{\partial \epsilon_i}\right|_{\epsilon=0} |Z(\Omega'_{X_z},\gamma)|^2
=& \left.\frac{\partial}{\partial \epsilon_i}\right|_{\epsilon=0} \frac{|\int_\gamma \Omega'_{X_z}|^2}{\sqrt{-1} \int_{X_z} \Omega'_{X_z} \wedge \overline{ \Omega'_{X_z}}} \notag \\
=& e^{K^B(z)} \int_\gamma \overline{\Omega_{X_z}} \int_\gamma \Omega_i  \notag 
\end{align}
By the assumption that $Z(\Omega_{X_z},\gamma)= e^{\frac{K^B(z)}{2}}\int_\gamma \Omega_{X_z} \ne 0$, 
$z \in \widetilde{\mathfrak{M}}_{\mathrm{Cpx}}$ is a stationary point if and only if $\int_\gamma \Omega_i=0 \ (1 \le i \le k)$. 
Since $\Omega_1,\dots,\Omega_k$ form a basis of $H^{2,1}(X)$, 
this condition is equivalent to 
$$
\gamma^{PD} \in H^{3,0}(X)\oplus H^{0,3}(X).
$$ 
Therefore, since $\gamma^{PD} \in H^3(X,\Z)$, 
$$
\gamma^{PD}=(C\Omega_{X_z}+\overline{ C\Omega_{X_z}})=\mathrm{Re} (C\Omega_{X_z})
$$
for some $C \in \C$. 
\end{proof}

\begin{Thm} \label{minimizer}
A stationary point $z \in  \widetilde{\mathfrak{M}}_{\mathrm{Cpx}}$ of the mass function $|Z(-,\gamma)|$ with $Z(\Omega_{X_z},\gamma) \ne 0$ is a local minimizer. 
Moreover, such points are discrete. 
\end{Thm}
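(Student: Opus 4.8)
The plan is to compute the Hessian of $|Z(-,\gamma)|^2$ at the stationary point $z$ and show it is positive definite, which gives the local minimizer claim, and then to argue that the attractor equation (\ref{Attractor equation}) pins down $\Omega_{X_z}$ up to scale inside $H^3(X,\R)$, forcing the set of such $z$ to be discrete. Both parts rest on the same computation already begun in the proof of Theorem \ref{attractor eq them}.

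First I would continue the local analysis in the coordinates $\epsilon=(\epsilon_1,\dots,\epsilon_k)$ coming from the Kodaira--Spencer identification, where $\Omega'_{X_z}=\Omega_{X_z}+\sum_i \epsilon_i\Omega_i$ with $\Omega_1,\dots,\Omega_k$ a basis of $H^{2,1}(X)$. We know $\partial_{\epsilon_i}|Z(\Omega'_{X_z},\gamma)|^2|_{\epsilon=0}=e^{K^B(z)}\overline{\int_\gamma\Omega_{X_z}}\int_\gamma\Omega_i$, and at the stationary point $\int_\gamma\Omega_i=0$ for all $i$. Differentiating once more and evaluating at $\epsilon=0$, every term in which a factor $\int_\gamma\Omega_i$ survives undifferentiated vanishes, so only the terms where both derivatives land on the ``numerator-type'' factors contribute. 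A direct computation should give, up to the positive prefactor $e^{K^B(z)}$,
\begin{equation}
\left.\frac{\partial^2}{\partial\epsilon_i\,\partial\overline{\epsilon_j}}\right|_{\epsilon=0}|Z(\Omega'_{X_z},\gamma)|^2 = e^{K^B(z)}\Bigl(\overline{\textstyle\int_\gamma\Omega_i}\ \textstyle\int_\gamma\Omega_j - |Z(\Omega_{X_z},\gamma)|^2\, G_{i\bar j}\Bigr),
\end{equation}
where $G_{i\bar j}$ is (a constant multiple of) the Weil--Petersson metric on $H^{2,1}(X)$; at a stationary point the first term drops and, after also checking that the holomorphic Hessian $\partial_{\epsilon_i}\partial_{\epsilon_j}$ vanishes, one finds the real Hessian of $|Z(-,\gamma)|^2$ equals $-|Z(\Omega_{X_z},\gamma)|^2$ times the (positive definite) Weil--Petersson metric. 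Hence $|Z(-,\gamma)|^2$ has a strict local \emph{maximum}? — here I must be careful with signs: the correct statement, matching the physics ``attractor'' picture, is that $|Z(-,\gamma)|$ is minimized, so I would track the sign conventions in $K^B$ and in the pairing $\sqrt{-1}\int\Omega\wedge\overline\Omega>0$ carefully to land on positive definiteness of the Hessian of $|Z(-,\gamma)|^2$ (equivalently of $|Z(-,\gamma)|$, since $|Z|\ne 0$ there), so that $z$ is a strict local minimizer.

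For discreteness, I would use the attractor equation itself: at a stationary point, $\gamma^{PD}=\re(C\,\Omega_{X_z})$ for some $C\in\C^\times$, so the line $\C\cdot\Omega_{X_z}\subset H^3(X,\C)$ is determined by the fixed lattice vector $\gamma^{PD}$ together with its complex conjugate, i.e. $\C\cdot\Omega_{X_z}=\C\cdot(\gamma^{PD}+\sqrt{-1}\,\ast\gamma^{PD})$ where $\ast$ is dictated by the Hodge decomposition condition $\gamma^{PD}\in H^{3,0}\oplus H^{0,3}$. Thus the point of the period domain $D$ (the classifying space for the relevant Hodge structures) corresponding to a stationary $z$ is cut out by the real-linear conditions $\gamma^{PD}\in F^3\oplus\overline{F^3}$ together with $\gamma^{PD}\neq 0$, a complex-analytic (indeed algebraic) condition on $D$; since the period map $\widetilde{\mathfrak{M}}_{\mathrm{Cpx}}\to D$ is holomorphic and, by the local Torelli theorem (Bogomolov--Tian--Todorov), locally an immersion, the stationary locus is a locally closed complex-analytic subset of $\widetilde{\mathfrak{M}}_{\mathrm{Cpx}}$, and by the Hessian computation it is moreover contained in the locus where a nondegenerate function has isolated critical points — combining these, each stationary point is isolated, hence the set is discrete.

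The main obstacle I anticipate is the second-derivative bookkeeping together with the sign conventions: one must verify that at $\epsilon=0$ the mixed antiholomorphic/holomorphic Hessian is governed purely by the Weil--Petersson metric term (all cross terms vanishing because $\int_\gamma\Omega_i=0$), that the purely holomorphic Hessian $\partial_{\epsilon_i}\partial_{\epsilon_j}$ also vanishes, and that the resulting quadratic form on the real tangent space $\re(T_z)\oplus\im(T_z)$ has a definite sign consistent with ``minimizer.'' A secondary subtlety is making the discreteness argument fully rigorous when $h^{2,1}>0$: one should note that the stationary condition on $D$ (which has dimension typically larger than $\dim\widetilde{\mathfrak{M}}_{\mathrm{Cpx}}$) intersected with the image of the period map could a priori be positive-dimensional, so the Hessian nondegeneracy is genuinely needed to rule this out; it is precisely the strict local-minimizer property that forces isolatedness, and I would present the two parts in that logical order.
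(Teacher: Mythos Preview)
Your approach is the same as the paper's: compute the complex Hessian of $|Z(-,\gamma)|^2$ at the stationary point and identify it with a positive multiple of the Weil--Petersson metric. The paper carries out the computation and finds
\[
\left.\frac{\partial^2}{\partial \epsilon_i\,\partial \overline{\epsilon_j}}\right|_{\epsilon=0}|Z(\Omega'_{X_z},\gamma)|^2
= \frac{-\sqrt{-1}\,|\!\int_\gamma\Omega_{X_z}|^2\int_{X_z}\Omega_i\wedge\overline{\Omega_j}}{(\sqrt{-1}\int_{X_z}\Omega_{X_z}\wedge\overline{\Omega_{X_z}})^2}
= 2e^{K^B(z)}\Big|\!\int_\gamma\Omega_{X_z}\Big|^2 g^B_{i\bar j}(z),
\]
which is positive definite; discreteness follows immediately since a nondegenerate critical point is isolated.

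Your sign confusion is the only real issue, and it is easily resolved: the Hodge--Riemann relations for weight $3$ give $\sqrt{-1}\int\Omega_i\wedge\overline{\Omega_i}<0$ for $\Omega_i\in H^{2,1}$, the \emph{opposite} sign from $H^{3,0}$. Thus the term $-N D^{-2}\cdot\sqrt{-1}\int\Omega_i\wedge\overline{\Omega_j}$ (which is what survives after $\int_\gamma\Omega_i=0$ kills the other contributions) is a \emph{positive} multiple of $g^B_{i\bar j}$, not negative. Your displayed formula with the minus sign in front of $|Z|^2 G_{i\bar j}$ is therefore off by a sign once $G$ is normalized to be positive definite. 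You are right that one should also check $\partial_{\epsilon_i}\partial_{\epsilon_j}|Z|^2|_{\epsilon=0}=0$ so that the real Hessian is governed by the mixed part; this is immediate from $\partial_j(\int_\gamma\Omega_i\cdot\overline{\int_\gamma\Omega'})=0$, and the paper leaves it implicit. Your period-domain argument for discreteness is unnecessary: as you yourself note at the end, the nondegenerate Hessian already forces each stationary point to be isolated, and that is exactly how the paper concludes.
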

\begin{proof}
For a stationary point $z \in \widetilde{\mathfrak{M}}_{\mathrm{Cpx}}$, a straightforward but tedious calculation shows
\begin{align}
\left.\frac{\partial^2}{\partial \epsilon_i\partial \overline{\epsilon_j}}\right|_{\epsilon=0}|Z(\Omega'_{X_z},\gamma)|^2
=&\left.\frac{\partial^2}{\partial \epsilon_i\partial \overline{\epsilon_j}}\right|_{\epsilon=0} \frac{|\int_\gamma \Omega'_{X_z}|^2}{\sqrt{-1} \int_{X_z} \Omega'_{X_z} \wedge \overline{ \Omega'_{X_z}}} \notag \\
=& \frac{- \sqrt{-1} |\int_\gamma \Omega_{X_z}|^2  \int_{X_z} \Omega_i \wedge \overline{\Omega_j} }{(\sqrt{-1} \int_{X_z} \Omega_{X_z} \wedge \overline{ \Omega_{X_z}})^2} \notag \\
=& 2e^{K^B(z)} |\int_\gamma \Omega_{X_z}|^2 g^B_{i \bar{j}}(z).  \notag 
\end{align}
Therefore the complex Hessian of the function $|Z(-,\gamma)|^2$ at $z$ is identified with the Weil--Petersson metric $g^B_{i \bar{j}}(z)$, rescaled by a positive constant, and hence is positive definite.  
\end{proof}

Theorem \ref{minimizer} asserts that there are 3 different types of the behavior of the mass function $|Z(-,\gamma)|$ 
depending on the nature of $\gamma \in H_3(X,\Z)$.  

\begin{enumerate}
\item There exists no stationary point. 
\item There exists a stationary point $z \in \widetilde{\mathfrak{M}}_{\mathrm{Cpx}}$ with $Z(\Omega_{X_z},\gamma)=0$. 
In this case the equation $\int_\gamma \Omega_{X_z}=0$ defines a divisor on $\widetilde{\mathfrak{M}}_{\mathrm{Cpx}}$.  
\item There exists a stationary point $z \in \widetilde{\mathfrak{M}}_{\mathrm{Cpx}}$ with $Z(\Omega_{X_z},\gamma) \ne 0$. 
\end{enumerate}

\begin{Def}
A stationary point $z \in  \widetilde{\mathfrak{M}}_{\mathrm{Cpx}}$ with $Z(\Omega_{X_z},\gamma) \ne 0$ is called a complex attractor for $\gamma$. 
The corresponding Calabi--Yau 3-fold $X_z$ is called a complex attractor variety for $\gamma$. 
We denote by $\widetilde{\Attr}_{\mathrm{Cpx}}(\gamma) \subset  \widetilde{\mathfrak{M}}_{\mathrm{Cpx}}$ the set of complex attractors for $\gamma$. 
Then we define 
$$
\Attr_{\mathrm{Cpx}}= \pi(\cup_{\gamma} \widetilde{\Attr}_{\mathrm{Cpx}}(\gamma)) \subset \mathfrak{M}_{\mathrm{Cpx}}, 
$$
where $\gamma$ runs over $H^3(X,\Z)$, and call it the complex attractor constellation of $X$. 
\end{Def}

\begin{Rem}
The complex attractor equation is concerned with $\Omega_{X_z}$. 
It in general does not determine a complex attractor $z \in  \widetilde{\mathfrak{M}}_{\mathrm{Cpx}}$ because a Torelli type theorem fails in 3-dimensions.  
\end{Rem}

It is natural to ask whether or not a complex attractor gives the global minimum, but the situation is rather complicated partly due to the non-compactness of the complex moduli space $\mathfrak{M}_{\mathrm{Cpx}}$.   
In fact, it is claimed in \cite[Section 9.2]{Moo} that there exists a Calabi--Yau 3-fold for which a single $\gamma$ leads to several distinct complex attractors with different values of local minima.  

Another important problem is to investigate the distribution of the complex attractor constellation $\Attr_{\mathrm{Cpx}} \subset \mathfrak{M}_{\mathrm{Cpx}}$. 
Note that it is not an intrinsic property of the complex manifold $\mathfrak{M}_{\mathrm{Cpx}}$. 
For example, many 1-parameter families of Calabi--Yau 3-folds share their complex moduli $\mathbb{P}^1\setminus\{0,1,\infty\}$,   
but the complex constellations should be different because they depend on the Calabi--Yau 3-folds they parametrize (to be precise, the variation of Hodge structures).

\begin{Conj} \label{conj lcsl}
Let $\mathfrak{M}_{\mathrm{Cpx}} \subset \overline{\mathfrak{M}}_{\mathrm{Cpx}}$ be a (partial) compactification. 
The complex attractor constellation $\Attr_{\mathrm{Cpx}}$ is dense near a large complex structure limit $z \in \overline{\mathfrak{M}}_{\mathrm{Cpx}}$. 
\end{Conj}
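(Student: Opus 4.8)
The plan is to establish the conjecture by a transversality argument which in fact yields density of the attractor locus throughout a whole punctured neighborhood of the large complex structure limit. Work on the universal cover, where the flat Gauss--Manin connection identifies $H^3(X_z,\Z)$ with a fixed lattice $H^3(X,\Z)\subset H^3(X,\R)$. Fix a point $z_0\in\widetilde{\mathfrak{M}}_{\cpx}$ close to the large complex structure limit, a small ball $B\subset\widetilde{\mathfrak{M}}_{\cpx}$ about $z_0$, and a holomorphic frame $\{\Omega_z\}$ of the Hodge bundle over $B$; set $h:=h^{2,1}(X)=\dim\widetilde{\mathfrak{M}}_{\cpx}$. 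Consider the smooth map
\[
\Phi\colon \C^{*}\times B\longrightarrow H^3(X,\R),\qquad \Phi(C,z)=\re(C\,\Omega_z).
\]
By Theorem~\ref{attractor eq them}, a point $z\in B$ belongs to some $\widetilde{\Attr}_{\cpx}(\gamma)$ precisely when $\Phi(C,z)=\gamma^{PD}$ for some $C\in\C^{*}$: indeed $\gamma^{PD}=\re(C\Omega_z)$ forces $\int_\gamma\Omega_z=\bar C\cdot\sqrt{-1}\int_{X_z}\overline{\Omega_z}\wedge\Omega_z$, which is nonzero exactly when $C\neq0$, so the condition $Z(\Omega_z,\gamma)\neq0$ is then automatic. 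Thus it suffices to exhibit a nonzero lattice point in the image of $\Phi$.

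First I would check that $\Phi$ is a submersion at every point of $\C^{*}\times B$. At $(C,z)$ the image of $D\Phi$ is (as $\delta C$ and the $\delta t^a$ range over $\C$) the real linear span of the real and imaginary parts of $\mathrm{span}_\C\{\Omega_z,\partial_1\Omega_z,\dots,\partial_h\Omega_z\}$, which by Griffiths transversality together with the Kodaira--Spencer / Bogomolov--Tian--Todorov isomorphism $H^1(X_z,TX_z)\xrightarrow{\ \sim\ }\Hom\bigl(H^{3,0}(X_z),H^{2,1}(X_z)\bigr)$ already invoked in the proof of Theorem~\ref{attractor eq them} equals $F^2(X_z)=H^{3,0}(X_z)\oplus H^{2,1}(X_z)$. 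Since $F^2(X_z)+\overline{F^2(X_z)}=H^3(X,\C)$ for the honest polarized Hodge structure $H^3(X_z)$, these real and imaginary parts span all of $H^3(X,\R)$, so $D\Phi$ is surjective. Hence $\Phi$ is an open map and its image $\mathcal O=\bigcup_{z\in B}\bigl(P_z\setminus\{0\}\bigr)$ is open in $H^3(X,\R)$, where $P_z:=\{\re(C\Omega_z):C\in\C\}$ is a real $2$-plane through the origin. Being a union of punctured linear $2$-planes, $\mathcal O$ is invariant under dilations; an open dilation-invariant subset of $H^3(X,\R)\cong\R^{2h+2}$ necessarily meets the lattice $H^3(X,\Z)$ in a nonzero vector (it contains a ball of arbitrarily large radius; equivalently, its set of directions is a nonempty open subset of the unit sphere, on which lattice directions are dense). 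Any such $\gamma^{PD}\in\mathcal O$ is $\re(C\Omega_{z'})$ for some $z'\in B$ and $C\neq0$, so $z'\in B\cap\widetilde{\Attr}_{\cpx}(\gamma)$. As $B$ was an arbitrary small ball about an arbitrary point near the limit, $\Attr_{\cpx}$ is dense near it.

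For an effective form of the conclusion — and to make contact with the arithmetic questions of \cite{Moo} — one would instead run this argument inside the explicit local model furnished by the nilpotent orbit theorem: canonical coordinates $t^a$ with $q_a=e^{2\pi\sqrt{-1}\,t^a}$ on $\{\im t^a>M\}$, and period vector $\Pi(t)=\bigl(1,\,t^a,\,\partial_aF(t),\,2F(t)-t^a\partial_aF(t)\bigr)$ with $F(t)=-\tfrac16\kappa_{abc}t^at^bt^c+(\text{lower-order rational terms})+O(e^{-2\pi\im t})$. Writing $\gamma^{PD}=\re(C\,\Pi(t))$ componentwise, solving the polynomial (instanton-free) truncation, and then restoring the exponentially small corrections by an implicit function theorem, one produces attractors near any prescribed target in the limit region by simultaneous Diophantine (Dirichlet-type) approximation of its coordinates by the integral charges $\gamma$. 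The main obstacle of this route is to carry out the approximation so that \emph{all} $2h+2$ attractor equations hold for one and the same integral charge while the solution $t$ stays in the prescribed neighborhood — notably that, once the leading-order approximation has fixed most of the charge, the remaining ``triple-log'' equation can still be satisfied by an integer — and to make the implicit function step uniform as the limit is approached, where the Weil--Petersson metric degenerates and the entries of $\Pi(t)$ range over widely different orders of magnitude. The soft argument above sidesteps these difficulties but controls neither the size nor the arithmetic of the charges that occur, so extracting the conjecturally rich (e.g. complex-multiplication) structure of the attractor varieties remains the substantive problem.
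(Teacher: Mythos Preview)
The paper does not prove this statement: it is recorded as a conjecture, with only a reference to Moore's physics argument via mirror symmetry (``Moore showed by using mirror symmetry the complex attractors of rank~1 are dense near a large complex structure limit'') and explicit verifications in the special cases $E\times S$ and abelian $3$-folds. So there is no paper proof to compare against, and your first paragraph is in fact a clean rigorous argument for the conjecture.

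Your transversality argument is correct. The computation of $D\Phi$ is right: the $C$-derivatives give the real $2$-plane under $H^{3,0}$, the $z$-derivatives give (via Griffiths transversality and the Kodaira--Spencer isomorphism) the real image of $H^{2,1}$, and $F^2+\overline{F^2}=H^3(X,\C)$ yields surjectivity. The dilation-invariance step is also fine. One cosmetic slip: from $\gamma^{PD}=\re(C\Omega_z)=\tfrac12(C\Omega_z+\bar C\bar\Omega_z)$ one gets $\int_\gamma\Omega_z=\tfrac{\bar C}{2}\int_X\bar\Omega_z\wedge\Omega_z$, not quite the formula you wrote, but the conclusion $\int_\gamma\Omega_z\neq0\iff C\neq0$ is unchanged.

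It is worth noting that your argument uses nothing whatsoever about the large complex structure limit: it applies to any small ball in the smooth locus of $\widetilde{\mathfrak{M}}_{\mathrm{Cpx}}$, so you have actually shown that $\Attr_{\mathrm{Cpx}}$ is dense in all of $\mathfrak{M}_{\mathrm{Cpx}}$, strictly stronger than the stated conjecture. This is consistent with the paper's explicit results (Theorem~2.16 for $E\times S$ gives global density). By contrast, Moore's route---which you sketch in your second paragraph---really does use the nilpotent-orbit asymptotics specific to the LCSL and, as you say, trades generality for arithmetic information about the charges; your soft argument bypasses the Diophantine and uniformity issues at the cost of that control.
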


This conjecture is inspired by the observation that there seem infinitely many K\"ahler attractor points (to be introduced in Section \ref{Kahler attractor}) 
near the large volume limit of a Calabi--Yau 3-folds.

\begin{Rem}
A Calabi--Yau 3-fold $X$ is called rigid if $H^{2,1}(X)=0$. 
A rigid Calabi--Yau 3-fold $X$ is by definition a complex attractor variety for any $\gamma \in H_3(X,\Z)$.  
From this perspective, the complex attractor varieties are a vast generalization of the rigid Calabi--Yau 3-folds, whose arithmetic properties are of considerable interest.  
Indeed, in \cite[Section 8.2]{Moo}, Moore posed several interesting questions (the attractor conjectures) pertaining to the arithmetic nature of the complex attractor varieties. 
This direction of research has recently been carried out by Lam and Tripathy \cite{Lam, LT}. 
\end{Rem}

We now take a closer look at the complex attractors. 
The plane
$$
V(z) = H^{3,0}(X_z) \oplus H^{0,3}(X_z) \subset H^3(X, \C)
$$
varies as $z$ moves in $\widetilde{\mathfrak{M}}_{\mathrm{Cpx}}$, where we have a natural identification $H^3(X_z, \C) \cong H^3(X, \C)$ for a reference $X$. 
The intersection with the real $(2h^{2,1}+2)$-dimensional space $H^3(X, \R)$ is 
the 2-plane $V_\R(z)$ spanned over $\R$ by $\mathrm{Re}(\Omega_{X_z})$ and $\mathrm{Im}(\Omega_{X_z})$. 
For a generic $z \in \widetilde{\mathfrak{M}}_{\mathrm{Cpx}}$, the plane $V_\R(z)$ intersects $H^3(X,\Z) \subset H^3(X, \R)$ only in $0$.

\begin{Def}
Let $z \in \widetilde{\mathfrak{M}}_{\mathrm{Cpx}}$ be a complex attractor for some $\gamma' \in H_3(X,\Z)$. There are two cases: 
\begin{enumerate}
\item The intersection $V_\R(z) \cap H^3(X,\Z)$ is a lattice line. 
The point $z$ is a complex attractor for any non-zero $\gamma^{PD}  \in V_\R(z) \cap H^3(X,\Z)$. 
In this case $z$ is called a complex attractor of rank 1.
\item The intersection $V_\R(z) \cap H^3(X,\Z)$ is a lattice plane. 
Then there exist $\gamma_1, \gamma_2 \in H_3(X,\Z)$ such that the intersection $\gamma_1 \cap \gamma_2 \ne0$ 
and $V(z)$ is the complexification of the lattice $\Z \gamma_1^{PD} + \Z\gamma_2^{PD}$. 
Therefore $\gamma_1, \gamma_2$ simultaneously satisfy the complex attractor equations. In this case, $z$ is called a complex attractor of rank 2. 
\end{enumerate}
\end{Def}

\begin{Prop} \label{attractor rank 2}
Let $z\in \widetilde{\mathfrak{M}}_{\mathrm{Cpx}}$ be a complex attractor of rank 2 such that 
$\gamma_1, \gamma_2 \in H_3(X,\Z)$ with $\gamma_1 \cap \gamma_2 \ne0$ simultaneously satisfy the complex attractor equations
\begin{equation}
\gamma_1^{PD} = \mathrm{Re} (C_1\Omega_{X_z}), \ \ \ \gamma_2^{PD} = \mathrm{Re} (C_2\Omega_{X_z}). \label{2 attractor equations}
\end{equation}
for some $C_1, C_2 \in \C$. 
Then 
$$
\Omega_{X_z}=\frac{\sqrt{-1}}{\im(C_1\overline{C_2})}(\overline{C_1} \gamma_2^{PD}-\overline{C_2}\gamma_1^{PD}). 
$$
\end{Prop}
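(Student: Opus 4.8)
The plan is to treat the two complex attractor equations (\ref{2 attractor equations}) as a $2\times 2$ linear system for $\Omega_{X_z}$ and its conjugate and to invert it by Cramer's rule. First I would expand the real parts: writing $\Omega := \Omega_{X_z}$, the equations become
\[
2\gamma_1^{PD} = C_1\,\Omega + \overline{C_1}\,\overline{\Omega}, \qquad 2\gamma_2^{PD} = C_2\,\Omega + \overline{C_2}\,\overline{\Omega}
\]
in $H^3(X,\C)$. Viewing $\Omega$ and $\overline{\Omega}$ as the two unknown vectors, the scalar coefficient matrix is $\left(\begin{smallmatrix} C_1 & \overline{C_1}\\ C_2 & \overline{C_2}\end{smallmatrix}\right)$, whose determinant is $C_1\overline{C_2} - \overline{C_1}\,C_2 = 2\sqrt{-1}\,\im(C_1\overline{C_2})$.

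Next I would check that this determinant is nonzero; this is the only place where the rank-$2$ hypothesis genuinely enters. By definition of a complex attractor of rank $2$, the classes $\gamma_1^{PD},\gamma_2^{PD}$ span a rank-$2$ sublattice of $H^3(X,\Z)$ (equivalently, since the intersection form on $H_3$ of a $3$-fold is alternating, $\gamma_1\cap\gamma_2\ne 0$ already forces $\R$-linear independence), hence are $\R$-linearly independent. If $C_1\overline{C_2}$ were real, then, discarding the trivial case $C_2=0$ (which gives $\gamma_2^{PD}=0$), the ratio $C_1/C_2 = C_1\overline{C_2}/|C_2|^2$ would be a real number $\lambda$, and taking real parts in $\gamma_j^{PD}=\re(C_j\Omega)$ would yield $\gamma_1^{PD}=\lambda\,\gamma_2^{PD}$, a contradiction. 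So $\im(C_1\overline{C_2})\ne 0$ and the system is invertible.

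Finally I would solve it: multiplying the first equation by $\overline{C_2}$ and the second by $\overline{C_1}$ and subtracting eliminates $\overline{\Omega}$, leaving
\[
2\bigl(\overline{C_2}\,\gamma_1^{PD} - \overline{C_1}\,\gamma_2^{PD}\bigr) = \bigl(C_1\overline{C_2} - \overline{C_1}\,C_2\bigr)\,\Omega = 2\sqrt{-1}\,\im(C_1\overline{C_2})\,\Omega,
\]
and dividing by $2\sqrt{-1}\,\im(C_1\overline{C_2})$ and using $1/\sqrt{-1}=-\sqrt{-1}$ produces exactly the asserted formula $\Omega_{X_z}=\frac{\sqrt{-1}}{\im(C_1\overline{C_2})}(\overline{C_1}\gamma_2^{PD}-\overline{C_2}\gamma_1^{PD})$. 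I do not anticipate any real obstacle here: the argument is elementary linear algebra over $\C$, and the only step beyond a routine computation is the non-vanishing of $\im(C_1\overline{C_2})$, which is precisely what distinguishes rank $2$ from rank $1$.
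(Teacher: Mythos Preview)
Your proof is correct and follows essentially the same elementary linear-algebra route as the paper: the paper writes $\Omega_{X_z}=a_1\gamma_1^{PD}+a_2\gamma_2^{PD}$ (using that $V(z)$ is the complexification of $\Z\gamma_1^{PD}+\Z\gamma_2^{PD}$ in the rank-$2$ case) and then plugs this into the attractor equations to solve for $a_1,a_2$, whereas you set up the dual system in $\Omega,\overline{\Omega}$ and invert by elimination. Your version has the small advantage of making the non-degeneracy $\im(C_1\overline{C_2})\ne 0$ explicit, which the paper leaves implicit in the rank-$2$ hypothesis.
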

\begin{proof}
Since $\Omega_{X_z}$ lies in $V(z)$, we can write $\Omega_{X_z}=a_1 \gamma_1^{PD} + a_2 \gamma_2^{PD}$ for some $a_1,a_2 \in \C$. 
By plugging this in the complex attractor equations (\ref{2 attractor equations}), we determine the coefficients $a_1,a_2$. 
\end{proof}

While the complex attractors of rank 1 are expected to be dense in the moduli space, those of rank 2 are expected to be rare, 
as the underlying Calabi--Yau 3-fold in general need to satisfy very stringent conditions.
In fact, Moore showed by using mirror symmetry the complex attractors of rank 1 are dense near a large complex structure limit (Conjecture \ref{conj lcsl}) \cite{Moo}.


\subsection{Complex attractor mechanism for torus} \label{complex attractor mechanism for torus}

Let us consider a real $6$-dimensional torus $X=\C^3/(\Z^3 + \sqrt{-1} \Z^3)$. 
We introduce a complex structure on $X$ in such a way that 
$$
dz_i = dx_i + \sum_{j=1}^3T^{ij}dy_j \ \ \ (1 \le i \le 3)
$$
are holomorphic $1$-forms for a period matrix $T=(T^{ij}) \in \mathfrak{H}_3$. 
Such a complex torus is denoted by $X_T$. 
Then $X_T$ is biholomorphic to $\C^3/(\Z^3 + T \Z^3)$ equipped with the natural complex structure by the map
$$
\phi: \C^3/(\Z^3 \oplus \sqrt{-1} \Z^{3} )\longrightarrow  \C^3/(\Z^3 + T \Z^3), \ \ z=x+\sqrt{-1}y \mapsto z'=x+T y.
$$
We see $\phi$ is holomorphic because $\phi^*(dz')=d \phi^*(z')=dx+T dy$. 
We vary the complex structure of $X_T$ by, not varying the lattice as usual, but by varying the holomorphic volume form 
$$
\Omega_{X_T}=dz_1 \wedge dz_2 \wedge dz_3. 
$$
We fix a symplectic basis of $H^3(X_T,\Z)$ as follows
\begin{align}
\alpha_0 &= dx_1 \wedge dx_2 \wedge dx_3, \notag \\
\alpha_{ij} &= \frac{1}{2}\sum_{l,m=1}^3 \epsilon_{ilm} dx_l \wedge dx_m \wedge dy_j \ \  (1 \le i,j \le 3) \notag \\
\beta^0 &=- dy_1 \wedge dy_2 \wedge dy_3, \notag \\
\beta^{ij} &= \frac{1}{2}\sum_{l,m=1}^3 \epsilon_{jlm} dx_i \wedge dy_l \wedge dy_m \ \  (1 \le i,j \le 3).  \notag
\end{align}
where $\epsilon_{ilm}$ denotes the Levi--Civita symbol. 
With respect to this basis, $\Omega_{X_T}$ has an expansion 
$$
\Omega_{X_T}=\alpha_0+  \sum_{i,j=1}^3T^{ij}\alpha_{ij}+ \sum_{i,j=1}^3(\Cof(T)_{ij})\beta^{ij} -(\det(T)) \beta^0 
$$
where $\Cof(T)=(\Cof(T)_{ij})$ denotes the cofactor matrix of $T$. 
We fix a $3$-cycle $\gamma \in H_3(X_T,\Z)$ and write it as
$$
\gamma \ = q_0 A_0 + \sum_{i,j=1}^3Q_{ij} A_{ij}+ \sum_{i,j=1}^3P^{ij} B^{ij}+p^0 B^0
$$
where $A_0, A_{ij}, B^{ij}, B^0$ form a basis of $H_3(X_T,\Z)$ dual to the symplectic basis $\alpha_0, \alpha_{ij}, \beta^{ij}, \beta^0$. 
Then the normalized central charge of $\gamma$ reads
\begin{align}
Z(\Omega_{X_T},\gamma) & = e^{\frac{K^B(T)}{2}} \int_\gamma \Omega_{X_T} \notag \\
& =  e^{\frac{K^B(T)}{2}}(q_0 +  \sum_{i,j=1}^3Q_{ij}T^{ij} + \sum_{i,j=1}^3P^{ij}(\Cof(T)_{ij})-p^0\det(T)). \notag 
\end{align}

Therefore the complex attractor equation $\mathrm{Re}(C \Omega_{X_T})=\gamma^{PD}$ is equivalent to the following system of equations
\begin{align}
\mathrm{Re}(C) &= p^0 \notag \\
\mathrm{Re}(C T^{ij}) &= P^{ij}  \notag \\
\mathrm{Re}(C \Cof(T)_{ij}) &= -Q_{ij}  \notag \\
\mathrm{Re}(C \det (T)) &= q_0. \notag
\end{align}

\begin{Thm}[Moore \cite{Moo}]
\label{Thm:Moore}
A complex attractor for $\gamma \in H_3(X,\Z)$ exists if and only if the coefficient matrices $P=(P^{ij}), Q=(Q_{ij}) \in M_3(\Z)$ are symmetric. 
Then a complex attractor is unique and given by 
$$
T=((2PQ-(p^0q_0+\tr(PQ)E_3))+\sqrt{-D}E_3)(2R)^{-1} \in \mathfrak{H}_3
$$
where 
\begin{align}
R&=\Cof(P)+p^0Q, \notag \\
D&=((\tr(PQ))^2-\tr((PQ)^2))-(p^0q_0+\tr(PQ))^2+4(p^0 \det(Q) -q_0 \det(P)). \notag 
\end{align}
\end{Thm}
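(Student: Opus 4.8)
The plan is to regard the complex attractor equation $\re(C\Omega_{X_T})=\gamma^{PD}$ as the real system listed just before the statement — $\re(C)=p^0$, $\re(CT)=P$, $\re(C\,\Cof(T))=-Q$, $\re(C\det(T))=q_0$ — in the unknowns $C\in\C$ and $T=A+\sqrt{-1}\,B\in\mathfrak{H}_3$ (so $A,B$ are real symmetric and $B$ is positive definite), and to solve it by elimination. The ``only if'' direction is the quick observation that $T$, lying in $\mathfrak{H}_3$, is symmetric, hence so is $\Cof(T)$, and that $M\mapsto\re(CM)$ sends symmetric complex matrices to symmetric real ones; so the second and third equations force $P$ and $Q$ to be symmetric.

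For the converse, assume $P=P^{\mathsf T}$ and $Q=Q^{\mathsf T}$, and write $C=p^0+\sqrt{-1}\,v$ (the first equation fixes $\re C=p^0$, and $\gamma\neq0$ forces $C\neq0$). I would eliminate in three stages. First, the equation $\re(CT)=P$ reads $p^0A-vB=P$, expressing $A=\re T$ through $B=\im T$ and $v$ — equivalently $T=\tfrac{1}{p^0}(P+\sqrt{-1}\,\overline{C}\,B)$ when $p^0\neq0$, the case $p^0=0$ being treated by the analogous direct computation. Second, substituting this into $\re(C\,\Cof(T))=-Q$ and using that the $3\times3$ cofactor map is a homogeneous quadratic (hence polarizes), the equation collapses after cancellation to the clean matrix identity $|C|^2\,\Cof(B)=\Cof(P)+p^0Q$; this determines $B$ as a function of $v$ via the inverse-cofactor operation, forces $R:=\Cof(P)+p^0Q$ to be positive definite once $B$ is, and explains the role of $R$. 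Third, feeding $A$ and $B$ into $\re(C\det(T))=q_0$, expanding $\det(P+\sqrt{-1}\,\overline{C}\,B)$ by polarization of the determinant and simplifying with $|C|^2\Cof(B)=R$, one is left with a single scalar equation for $v$ which, after clearing the square root, is quadratic; solving it and substituting back yields $\im T$ proportional to $\sqrt{-D}\,R^{-1}$ with $D$ exactly the quantity in the statement and $\re T=(2PQ-(p^0q_0+\tr(PQ))E_3)(2R)^{-1}$, i.e. the displayed closed form. Uniqueness is immediate, since $v$ — and therefore $C$, $B$ and $A$ — is pinned down (the sign of $v$ being fixed by the scalar equation before squaring).

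I expect the real work to lie in the second and third stages: carrying out the cofactor and determinant polarizations for $3\times3$ matrices with complex scalar arguments and performing the cancellations cleanly enough that $R$ and $D$ emerge in precisely the normalized form of the statement, and tracking every sign convention — symplectic basis, Poincar\'e duality, cofactors — so that the chosen branch of the square root lands in $\mathfrak{H}_3$ rather than at its complex-conjugate partner. Conceptually each step is elementary; the delicacy is entirely in the bookkeeping.
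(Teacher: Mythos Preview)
Your proposal is correct and follows essentially the same elimination strategy as the paper's proof in Appendix~A: write $C=p^0+\sqrt{-1}\,\zeta^0$, use the second equation to express $\re T$ through $\im T$, obtain $|C|^2\Cof(\im T)=R$ from the cofactor equation, and reduce the determinant equation to a scalar constraint on $\zeta^0$. The only differences are organizational: the paper isolates the algebraic identity $4\det(R)-M^2=(p^0)^2D$ (with $M=2\det P+(p^0)^2q_0+p^0\tr(PQ)$) as a separate lemma that makes the final step collapse, and it handles the degenerate case $p^0=0$ via an explicit $9\times9$ linear system rather than leaving it as ``the analogous direct computation.''
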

\begin{proof}
We provide in Appendix A a rigorous and accessible proof based on Moore's original argument.  
One of our contributions is to show that there is no complex attractor if $P,Q$ are not symmetric. 
\end{proof}

The complex attractor variety $X_T \cong \C^3/(\Z^3 + T \Z^3)$ has the following interesting property. 
The lattice embedding 
$$
\Z^3+T(2R)\Z^3 \hookrightarrow \Z^3 + T \Z^3
$$
induces an isogeny
$$
\phi: (E_{\sqrt{-D}})^3 \cong \C^3/(\Z^3+T(2R)\Z^3) \longrightarrow X_T. 
$$
In other words, the complex attractor variety $X_T$ is isogenous to 
the self-product $(E_{\sqrt{-D}})^3$ of the elliptic curves $E_{\sqrt{-D}}$ with complex multiplication 
by the covering map $\phi$ of degree $8 \det(R)$. 
In particular, $X_T$ is defined over a finite extension of the field $\Q(\sqrt{-D})$.

For a projective complex manifold $X$ the Lefschetz $(1,1)$-theorem asserts that the N\'eron--Severi group $NS(X)=H^2(X,\Z)\cap H^{1,1}(X)$. 
The rank $\rho(X)$ of the N\'eron--Severi group, the so-called Picard number, satisfies the inequality $1 \le \rho(X) \le h^{1,1}(X)$. 

\begin{Thm}[{\cite[Theorem 2.1]{HL}}] \label{maximal picard number}
Let $A$ be an abelian variety of dimension $g$. The following are equivalent. 
\begin{enumerate}
\item The Picard number is maximal, i.e. $\rho(A)=g^2$; 
\item $A$ is isogenous to the self-product of an elliptic curve $E$ with complex multiplication, i.e. $A \sim E^g$; 
\item $A$ is isomorphic to the product of some pairwise isogenous elliptic curves $E_1,\dots,E_g$ with complex multiplication, i.e. $A \cong E_1 \times \cdots \times E_g$. 
\end{enumerate}
\end{Thm}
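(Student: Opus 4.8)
The plan is to translate the statement entirely into the language of the endomorphism algebra $\mathrm{End}^0(A)=\mathrm{End}(A)\otimes_\Z\Q$ with its Rosati involution, and then to invoke the structure theory of such algebras. Fix a polarization of $A$; the associated Rosati involution $x\mapsto x'$ is a positive involution of the semisimple $\Q$-algebra $\mathrm{End}^0(A)$, and the first Chern class furnishes an isomorphism
\[
\NS(A)\otimes\Q\ \xrightarrow{\ \sim\ }\ \mathrm{End}^0(A)^{+}:=\{x\in\mathrm{End}^0(A):x'=x\},
\]
so that $\rho(A)=\dim_\Q\mathrm{End}^0(A)^{+}$. Since $\mathrm{End}^0(A)$ and its Rosati involution are isogeny invariants, so is $\rho$; this reduces the equivalence (1)$\Leftrightarrow$(2) to a statement about the isogeny class of $A$ and isolates (2)$\Leftrightarrow$(3) as the one place where an isogeny must be improved to an isomorphism.

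I would dispose of the easy directions first. If $A\cong E_1\times\cdots\times E_g$ with the $E_i$ pairwise isogenous CM elliptic curves — or even if only $A\sim E^g$ with $E$ a CM elliptic curve — then $\mathrm{End}^0(A)\cong M_g(K)$ as a $\Q$-algebra, with $K=\mathrm{End}^0(E)$ imaginary quadratic, and the Rosati involution is conjugate transposition with respect to a positive-definite Hermitian form over $K$; counting Hermitian $g\times g$ matrices over $K$ (the $g$ diagonal entries lie in $\Q$, the $\binom g2$ strictly-upper entries range over $K$) gives $\rho(A)=g+2\binom g2=g^2$, so (2)$\Rightarrow$(1), while (3)$\Rightarrow$(2) is immediate. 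For (2)$\Rightarrow$(3) one must improve the isogeny to an isomorphism: writing $A=\C^g/\Lambda$ — the complex structure being pinned down, up to the CM type, by the embedding into $\C$ of the centre $K$ of $\mathrm{End}^0(A)=M_g(K)$ — the lattice $\Lambda$ is a full $\Z$-lattice in $K^g$, hence a finitely generated torsion-free module over an order in $K$, and the structure theory of such lattices (Shioda--Mitani's theorem on singular abelian surfaces for $g=2$, and the general case after reduction via Steinitz's theorem to the maximal order $\mathcal O_K$) decomposes $\Lambda$ as a direct sum of $g$ rank-one sublattices spanning $K$-lines; the corresponding complex quotients are elliptic curves with CM, pairwise isogenous to $E$, whose product is $A$.

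The substantive implication is (1)$\Rightarrow$(2). By Poincaré complete reducibility write $A\sim\prod_i A_i^{\,n_i}$ with the $A_i$ simple, pairwise non-isogenous, $\sum_i n_i\dim A_i=g$, and $\mathrm{End}^0(A)=\prod_i M_{n_i}(D_i)$, $D_i=\mathrm{End}^0(A_i)$ a division algebra. The Rosati involution preserves each factor and restricts to a positive involution there, so $\rho(A)=\sum_i\dim_\Q M_{n_i}(D_i)^{+}$. Now invoke Albert's classification of division algebras with positive involution (types I--IV), together with the classical divisibility restrictions forced by the faithful action of $D_i\otimes_\Q\R$ on $H_1(A_i,\R)$ compatibly with the complex structure: writing $e_0$ for the $\Q$-degree of the totally real centre (types I, II, III) or of its totally real subfield (type IV), and $d^2$ for the degree of $D_i$ over its centre, one has $e_0\mid\dim A_i$ (type I), $2e_0\mid\dim A_i$ (types II, III), and $e_0d^2\mid\dim A_i$ (type IV). In each type one computes $\dim_\Q M_n(D)^{+}$ explicitly — namely $e_0\binom{n+1}2$, $e_0n(2n+1)$, $e_0n(2n-1)$, and $n^2e_0d^2$ for types I, II, III, IV — and checks
\[
\dim_\Q M_{n_i}(D_i)^{+}\ \le\ (n_i\dim A_i)^2,
\]
with equality forcing $\dim A_i=1$ and, apart from the degenerate case $D_i=\Q$ with $n_i=1$ (which is the $g=1$ situation), forcing $D_i$ to be imaginary quadratic, i.e.\ $A_i$ to be a CM elliptic curve. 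Feeding this into the elementary inequality $\sum_i x_i^2\le(\sum_i x_i)^2$ for nonnegative $x_i=n_i\dim A_i$, which is an equality precisely when a single $x_i$ is nonzero, the hypothesis $\rho(A)=g^2$ forces a single isotypic component, equal (for $g\ge2$) to a power of a CM elliptic curve; hence $A\sim E^g$ with $E$ a CM elliptic curve, which is (2).

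The main obstacle is this last case analysis. It rests on two nontrivial inputs — Albert's classification and, more delicately, the sharp divisibility restrictions recorded above on which pairs $(D_i,\dim A_i)$ actually arise — and the subtle point is the equality discussion in type IV, where one must check that ``large'' division algebras over CM fields (quaternion algebras over imaginary quadratic fields, and the like) are ruled out as spurious maximisers precisely by the constraint $e_0d^2\mid\dim A_i$. Once that is in place, the convexity step, the Hermitian-matrix count, and the reduction of (2) to (3) are comparatively routine.
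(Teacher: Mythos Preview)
The paper does not prove this theorem at all: it is quoted verbatim from \cite[Theorem~2.1]{HL} and used as a black box (in the subsequent Corollary and in Theorem~\ref{attractor = 9}). There is therefore no proof in the paper to compare your proposal against.

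That said, your outline is the standard route to this result and is essentially correct. The identification $\NS(A)\otimes\Q\cong\mathrm{End}^0(A)^{+}$, the Poincar\'e decomposition, Albert's classification, and the characteristic-zero divisibility constraints ($e_0\mid g_i$, $2e_0\mid g_i$, $e_0d^2\mid g_i$ in types I, II/III, IV) are exactly the ingredients used in the literature, and your case-by-case bound $\dim_\Q M_{n_i}(D_i)^{+}\le(n_i\dim A_i)^2$ with equality analysis is right. One small caution on (2)$\Rightarrow$(3): the lattice $\Lambda$ need not a priori be a module over the \emph{maximal} order $\mathcal{O}_K$, and Steinitz's theorem fails for non-Dedekind orders, so the reduction step you gesture at (``after reduction via Steinitz's theorem to the maximal order'') needs a sentence of justification --- e.g.\ passing by an isogeny to an abelian variety whose endomorphism ring is maximal, or invoking the more refined decomposition results for lattices over imaginary-quadratic orders. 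This is the genuinely delicate part of the Hulek--Laface paper, and your sketch is a bit thin there, but the direction is right.
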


Theorem \ref{maximal picard number} points out how the Picard number forces the structure of an abelian variety to be rigid. 
It is classically known that the algebraic varieties with the maximum Picard number possible often possess interesting arithmetic and geometric properties. 

\begin{Cor}
The complex attractor variety $X_T$ has the maximal Picard number $\rho(X_T)=9$, and hence is of rank 2. 
Moreover, $X_T \cong E_1 \times E_2 \times E_3$ for some pairwise isogenous elliptic curves $E_1, E_2,E_3$ with complex multiplication. 
\end{Cor}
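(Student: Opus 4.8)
The plan is to chain together the structural results already established for the complex attractor variety $X_T$. First I would recall that Theorem \ref{Thm:Moore} together with the subsequent discussion shows that $X_T \cong \C^3/(\Z^3 + T\Z^3)$ admits an isogeny from $(E_{\sqrt{-D}})^3$, namely the covering map $\phi$ of degree $8\det(R)$ coming from the lattice embedding $\Z^3 + T(2R)\Z^3 \hookrightarrow \Z^3 + T\Z^3$. In particular $X_T \sim E^3$ for the CM elliptic curve $E = E_{\sqrt{-D}}$. Since $X_T$ is itself an abelian variety of dimension $g = 3$, this places us squarely in the situation of Theorem \ref{maximal picard number}.

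Next I would apply Theorem \ref{maximal picard number} with $A = X_T$ and $g = 3$. Condition (2) — that $A$ is isogenous to the self-product of a CM elliptic curve — is exactly what the isogeny $\phi$ gives us. Hence condition (1) holds, i.e. the Picard number is maximal: $\rho(X_T) = g^2 = 9$. Since $h^{1,1}(X_T) = 9$ for a complex $3$-torus (the space of $(1,1)$-forms on $\C^3/\Lambda$ has dimension $3^2 = 9$), this confirms $\rho(X_T) = h^{1,1}(X_T)$. Moreover condition (3) then yields the isomorphism $X_T \cong E_1 \times E_2 \times E_3$ for pairwise isogenous CM elliptic curves $E_1, E_2, E_3$; since they are all isogenous to $E_{\sqrt{-D}}$, they are pairwise isogenous, as claimed.

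It remains to explain why maximal Picard number forces the attractor to be of rank $2$ in the sense of the Definition preceding Proposition \ref{attractor rank 2}. Here I would argue that a rank-$1$ complex attractor $z$ would impose $V_\R(z) \cap H^3(X,\Z)$ to be merely a lattice line, whereas the explicit formula of Proposition \ref{attractor rank 2}, or more directly the fact that both $\gamma_1, \gamma_2$ (with $\gamma_1 \cap \gamma_2 \ne 0$) satisfy the attractor equations, shows the intersection is a full lattice plane — this is already built into Theorem \ref{Thm:Moore}, where the solution $T$ simultaneously solves the system for the given $\gamma$ whose Poincaré dual together with a companion cycle spans a rank-$2$ sublattice. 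Alternatively, one can observe that a rank-$1$ attractor variety among complex tori would have to have smaller Picard number, contradicting $\rho(X_T) = 9$; more precisely, the existence of two independent integral classes in $H^{3,0} \oplus H^{0,3}$ is tied, via the Hodge-theoretic structure of the torus and the cofactor/determinant expansion of $\Omega_{X_T}$, to the abundance of algebraic cycles in $H^{1,1}$.

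I expect the main obstacle to be the last point: making precise, and rigorous, the passage "maximal Picard number $\Rightarrow$ rank $2$ attractor". The cleanest route is probably to bypass the Picard-number argument for this implication entirely and instead read rank $2$ directly off Theorem \ref{Thm:Moore} — i.e. exhibit the companion cycle $\gamma'$ with $\gamma^{PD}, (\gamma')^{PD}$ spanning $V_\R(z) \cap H^3(X,\Z)$ and with $\gamma \cap \gamma' \ne 0$ — using the explicit period expansion of $\Omega_{X_T}$ in the symplectic basis. Everything else (the isogeny, the invocation of Theorems \ref{Thm:Moore} and \ref{maximal picard number}, the value $\rho = 9 = h^{1,1}$, and the product decomposition into CM elliptic curves) is essentially a matter of citing the results already in place.
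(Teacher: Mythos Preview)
Your proposal is correct and follows exactly the route the paper intends: the Corollary is stated without proof, as an immediate consequence of the isogeny $X_T \sim (E_{\sqrt{-D}})^3$ established just before it together with the equivalence $(2)\Leftrightarrow(1)\Leftrightarrow(3)$ of Theorem~\ref{maximal picard number}. Your additional discussion of the rank-$2$ claim already goes beyond what the paper supplies; the cleanest justification is the one you sketch at the end --- the explicit formula for $T$ in Theorem~\ref{Thm:Moore} shows that every period of $\Omega_{X_T}$ (namely $1$, $T^{ij}$, $\Cof(T)_{ij}$, $\det(T)$) lies in $\Q(\sqrt{-D})$, so $\re(\Omega_{X_T})$ and $\im(\Omega_{X_T})/\sqrt{D}$ are both rational classes, and after clearing denominators they furnish two independent integral classes spanning $V_\R(z)\cap H^3(X,\Z)$.
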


In fact, the converse is also true and we have the following.

\begin{Thm} \label{attractor = 9}
The complex constellation $\Attr_{\mathrm{Cpx}}$ bijectively corresponds to the abelian 3-folds with Picard number $9$.  
\end{Thm}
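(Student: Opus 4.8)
The plan is to establish a bijection between $\Attr_{\cpx}$ and the set of isomorphism classes of abelian $3$-folds with Picard number $9$ by exhibiting maps in both directions and checking they are mutually inverse. The forward direction is essentially already done: by the Corollary, every complex attractor variety $X_T$ for the family of complex tori is an abelian $3$-fold of the form $E_1 \times E_2 \times E_3$ with pairwise isogenous CM elliptic curves, hence has $\rho(X_T) = 9$; so the association $z \mapsto X_z$ sends $\Attr_{\cpx}$ into the set of such abelian $3$-folds. The content is therefore the reverse direction: given an abelian $3$-fold $A$ with $\rho(A) = 9$, produce a point of $\Attr_{\cpx}$ whose attractor variety is $A$, and verify this recovers a genuine stationary point of some mass function $|Z(-,\gamma)|$.

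First I would invoke Theorem \ref{maximal picard number}: $\rho(A) = 9$ forces $A \cong E_1 \times E_2 \times E_3$ with the $E_i$ pairwise isogenous CM elliptic curves, all with CM by an order in $\Q(\sqrt{-D})$ for a common squarefree $D > 0$. Next I would realize $A$ as a complex torus $X_T$ in the normalization of Section \ref{complex attractor mechanism for torus}: choose a period matrix $T \in \mathfrak{H}_3$ for $A$ that is ``of attractor shape,'' i.e. lies in the image of the explicit formula in Theorem \ref{Thm:Moore}. The key algebraic step is the converse to that formula: given that $A$ has CM by $\Q(\sqrt{-D})$ and is a product of elliptic curves, one can choose a polarization and a symplectic basis so that the period matrix $T$ satisfies a relation $2RT^2 - 2(PQ - \lambda E_3)T + \dots = 0$ with integer symmetric matrices $P, Q$ — this is exactly the quadratic satisfied by the attractor $T$, and its existence is equivalent to the endomorphism algebra containing the imaginary quadratic field, i.e. to $T$ satisfying an integral quadratic equation with the required symmetry. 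From the resulting symmetric $P, Q \in M_3(\Z)$ one reads off $\gamma = q_0 A_0 + \sum Q_{ij} A_{ij} + \sum P^{ij} B^{ij} + p^0 B^0 \in H_3(X_T, \Z)$, and Theorem \ref{Thm:Moore} then guarantees that $T$ is precisely the (unique) complex attractor for this $\gamma$; hence $z = [T] \in \widetilde{\Attr}_{\cpx}(\gamma)$ and $\pi(z) \in \Attr_{\cpx}$ with attractor variety $X_T \cong A$.

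Then I would check that the two maps are inverse to each other. One direction is immediate from uniqueness in Theorem \ref{Thm:Moore}. For the other, I need that a point of $\Attr_{\cpx}$ whose attractor variety is an abelian $3$-fold must in fact arise, up to the biholomorphism $\phi$, from the torus family of Section \ref{complex attractor mechanism for torus} with the symplectic basis chosen there; this uses the Remark after Theorem \ref{attractor eq them} carefully — although Torelli fails for general Calabi--Yau $3$-folds, for abelian varieties the period point does determine the variety, and $H^3$ of an abelian $3$-fold is $\bigwedge^3 H^1$, so the Hodge structure on $H^3$ pins down the one on $H^1$ and hence the isomorphism class; I would spell this out. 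Finally, well-definedness on the base $\mathfrak{M}_{\cpx}$ (rather than its cover) and independence of the auxiliary choices (polarization, symplectic basis) reduce to the $\mathrm{Sp}(6,\Z)$-equivariance of the whole picture, which permutes the admissible $(\gamma, T)$ data without changing $X_T$ or the attractor locus.

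I expect the main obstacle to be the converse to Theorem \ref{Thm:Moore}: showing that \emph{every} abelian $3$-fold with $\rho = 9$ admits a period matrix of the explicit attractor form with \emph{integer symmetric} $P, Q$ and with $R = \Cof(P) + p^0 Q$ invertible and $D$ the right sign. Surjectivity of the attractor parametrization onto CM abelian $3$-folds of product type is plausible but needs an argument relating the shape of the quadratic equation satisfied by $T$ to the structure of $\mathrm{End}(A) \otimes \Q$ and to the choice of principal (or general) polarization; controlling the positivity condition $T \in \mathfrak{H}_3$ and the nondegeneracy of $R$ simultaneously is the delicate point. Everything else — the forward map, uniqueness, the Torelli-type statement for abelian varieties, and equivariance — should be routine given the results already assembled in the paper.
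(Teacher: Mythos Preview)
Your overall plan matches the paper's: the forward direction is already handled by the Corollary, and the real content is the converse --- given an abelian $3$-fold with $\rho=9$, produce a charge $\gamma$ for which it is an attractor variety. You also correctly identify this converse as the main obstacle.

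Where your proposal diverges from the paper is in how that obstacle is overcome. You suggest choosing a period matrix ``of attractor shape'' and reading off symmetric integer $P,Q$ from a matrix quadratic equation $2RT^2 - 2(PQ-\lambda E_3)T + \cdots = 0$ satisfied by $T$. This is not quite the right picture: the attractor formula does not say that $T$ satisfies a quadratic, but rather the linear relation $T\cdot(2R) = (\text{integer symmetric matrix}) + \sqrt{-D}\,E_3$, and it is not clear from your sketch how one would invert this to extract $(p^0,P,Q,q_0)$ with the required symmetry and sign conditions. The paper does not attempt any such inversion. Instead it works with an \emph{arbitrary} period matrix $T$ of the given $\rho=9$ abelian $3$-fold, uses the isogeny to $(E_{\sqrt{-D}})^3$ to produce an integral symmetric positive-definite $R$ with $TR \in M_3(\Z)+\sqrt{-D}\,E_3$, and then writes down an explicit ansatz for the charge:
\[
p^0=\det(R),\quad P=(p^0 S + M E_3)(2nR)^{-1},\quad Q=\tfrac{1}{p^0}(nR-\Cof(P)),\quad q_0=\tfrac{1}{(p^0)^2}(M-2\det P - p^0\tr(PQ)),
\]
with $n=(D+1)\det(R)$, $M=2n\det(R)$, $S=2n\,\re(T)R$. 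It then checks, using the identity of Lemma~\ref{lem:RMD}, that the attractor $\widetilde T$ associated to this charge by Theorem~\ref{Thm:AttractorSol} satisfies $\im(\widetilde T)=\im(T)$ and $\re(\widetilde T)=\re(T)$. So the paper's argument is a direct computational verification of a guessed formula, not an existence argument via CM theory; and it does not require changing the period matrix or the symplectic basis.

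Your more careful discussion of bijectivity (Torelli for abelian varieties, $\mathrm{Sp}(6,\Z)$-equivariance) is a welcome addition --- the paper's proof is really only the reverse inclusion and leaves those points implicit.
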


\begin{proof}
It suffices to show that an abelian 3-fold $X_T = \C^3/(\Z^3 + T \Z^3)$ with $\rho(X_T)=9$ is a complex attractor variety for some $\gamma \in H_3(X_T,\Z)$.   
A proof is based on a straightforward but tedious computation, and we leave it in Appendix B. 
\end{proof}


\subsection{Complex attractor mechanism for $E \times S$} \label{complex attractor mechanism for  ExS}

Let $E=\C/(\Z + \sqrt{-1}\Z)$ be a real 2-dimensional torus. 
We put a complex structure on $E$ in such a way that $dz=dx+\tau dy$ is holomorphic for $\tau \in \HH$ so that $E \cong \C/(\Z + \tau\Z)$ as a complex manifold. 
Let $S$ be a K3 surface equipped with a holomorphic volume form $\Omega_S \in H^{2,0}(S)$. 
We consider the product Calabi--Yau 3-fold $X=E \times S$, which carries a natural holomorphic volume form 
$$
\Omega_X= dz \wedge \Omega_S. 
$$

Note that $dx,dy$ form a symplectic basis of $H^1(E,\Z)$. 
By the K\"unneth theorem, we have the identification $H^3(X,\Z)\cong H^1(E,\Z)\otimes_\Z H^2(S,\Z)$. 
Therefore the Poincar\'e dual of a 3-cycle $\gamma \in H_3(X,\Z)$ can be expressed as
$$
\gamma^{PD}=dx \otimes u_1+ dy \otimes u_2, 
$$
for some $u_1,u_2 \in H^2(S,\Z)$. 
We define $D_{u_1,u_2}=u_1^2u_2^2-(u_1,u_2)^2 \in \Z$. 

Then the complex attractor equation $\mathrm{Re}(C \Omega_X)=\gamma^{PD}$ is equivalent to the following system of equations
\begin{align}
\mathrm{Re}(C \Omega_S)=u_1 \notag \\
\mathrm{Re}(C\tau \Omega_S)=u_2 \notag 
\end{align}

Before solving the complex attractor equation, we introduce some notations. 
The N\'eron--Severi lattice of $S$ is $NS(S)=H^2(X,\Z) \cap H^{1,1}(S)$ equipped with the cup product.  
It is of signature $(1,\rho(S)-1)$. 
The transcendental lattice is its complement $T(S)=NS(S)^\perp \subset H^2(S,\Z)$. 
It is of signature $(2,20-\rho(S))$ and characterized as the minimal sublattice of $H^2(S,\Z)$ whose complexification contains $\Omega_S$.

\begin{Thm}[Moore \cite{Moo}] \label{K3 complex attractor}
A complex attractor for $\gamma \in H_3(X,\Z)$ exists if and only if the lattice $\Z u_1+ \Z u_2$ is positive definite.  
Moreover, if it exits, it is uniquely determined by the following periods
$$
\tau=\frac{(u_1,u_2) +\sqrt{-D_{u_1,u_2}}}{u_1^2}, \ \ \ 
\Omega_S=-\sqrt{-1}(\overline{\tau}u_1-u_2). 
$$
In particular, it is of rank 2. 
\end{Thm}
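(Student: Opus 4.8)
The plan is to solve the two attractor equations $\mathrm{Re}(C\Omega_S)=u_1$ and $\mathrm{Re}(C\tau\Omega_S)=u_2$ explicitly, subject to the constraints $\tau\in\HH$ and the K3 period relations $\Omega_S^2=0$, $\Omega_S\overline{\Omega_S}>0$. Since a holomorphic volume form is only defined up to a nonzero scalar, it is convenient to set $W=C\Omega_S$; then $W$ still obeys $W^2=0$, $W\overline{W}>0$, and the equations read $\mathrm{Re}(W)=u_1$, $\mathrm{Re}(\tau W)=u_2$.

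First I would write $W=u_1+\sqrt{-1}\,v$ with $v\in H^2(S,\R)$ and $\tau=a+\sqrt{-1}\,b$ with $b>0$. The equation $\mathrm{Re}(\tau W)=u_2$ gives $bv=au_1-u_2$. Expanding $W^2=0$ separates into the imaginary part $u_1\cdot v=0$ and the real part $u_1^2=v^2$, whence $W\overline{W}=u_1^2+v^2=2u_1^2$, so the positivity $W\overline{W}>0$ forces $u_1^2>0$. Substituting $v=(au_1-u_2)/b$ into $u_1\cdot v=0$ forces $a=(u_1,u_2)/u_1^2$, after which $u_1^2=v^2$ reduces to $b^2=D_{u_1,u_2}/(u_1^2)^2$; as $b$ must be real and positive and $u_1^2>0$, this has a (then unique) solution precisely when $D_{u_1,u_2}>0$. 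Since $D_{u_1,u_2}=u_1^2u_2^2-(u_1,u_2)^2$ is the Gram determinant of $\Z u_1+\Z u_2$, the two conditions $u_1^2>0$ and $D_{u_1,u_2}>0$ together amount to the positive-definiteness of $\Z u_1+\Z u_2$, and they pin down $\tau=\big((u_1,u_2)+\sqrt{-D_{u_1,u_2}}\big)/u_1^2$ and hence $W$. Choosing the scalar $C=-1/b$ converts $W$ into the stated normalization $\Omega_S=-\sqrt{-1}(\overline{\tau}u_1-u_2)$; along the way I would check that this $\Omega_S$ satisfies $\Omega_S^2=0$ and $\Omega_S\overline{\Omega_S}=2b^2u_1^2>0$, and that $C\neq0$, so by Theorem \ref{attractor eq them} the resulting point is a genuine complex attractor (in particular $Z(\Omega_X,\gamma)\neq 0$).

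This computation already delivers the ``only if'' direction and the uniqueness of the periods. For the converse, suppose $\Z u_1+\Z u_2$ is positive definite; then the class $[\Omega_S]$ produced above lies in the period domain of K3 surfaces, so by the surjectivity of the period map there is a complex structure on the underlying K3 manifold $S$ realizing it. Since deformations of $E\times S$ are again products (because $H^1(S,\mathcal O_S)=H^0(S,TS)=0$), this complex structure together with the elliptic curve $E=\C/(\Z+\tau\Z)$ gives a point $z\in\widetilde{\mathfrak{M}}_{\mathrm{Cpx}}$ at which the attractor equation for $\gamma$ holds, i.e.\ a complex attractor variety $X=E\times S$ for $\gamma$; by the Torelli theorem it is unique.

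Finally, to show the attractor has rank $2$ I would exhibit a second integral class inside $V_\R(z)=H^{3,0}(X_z)\oplus H^{0,3}(X_z)$. Writing $\Omega_X=(dx+\tau\,dy)\wedge\Omega_S$ and using $a=(u_1,u_2)/u_1^2$, $|\tau|^2=u_2^2/u_1^2$, a direct expansion in the K\"unneth basis of $H^3(X)=H^1(E)\otimes H^2(S)$ shows that
\[
u_1^2\,\mathrm{Im}(\Omega_X)=dx\otimes\big(u_1^2u_2-(u_1,u_2)u_1\big)+dy\otimes\big((u_1,u_2)u_2-u_2^2u_1\big)
\]
has integral K\"unneth coefficients, hence lies in $H^3(X,\Z)$; being a real class in $\C\Omega_X+\C\overline{\Omega_X}=V(z)$, it lies in $V_\R(z)$, and it is $\R$-independent of $\gamma^{PD}$ (a real multiple of $\mathrm{Re}(\Omega_X)$) because $V_\R(z)$ is the $2$-plane spanned over $\R$ by $\mathrm{Re}(\Omega_X)$ and $\mathrm{Im}(\Omega_X)$. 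Hence $V_\R(z)\cap H^3(X,\Z)$ is a rank-$2$ lattice and $z$ has rank $2$. The one step carrying genuine content is the appeal to the surjectivity of the K3 period map; the rest is elementary linear algebra in $H^2(S,\R)$, the only point requiring care being that the reality and positivity constraints ($b>0$ and $W\overline{W}>0$) must be imposed simultaneously, since their conjunction is what produces the positive-definiteness criterion.
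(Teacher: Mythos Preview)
Your argument is correct and follows the same underlying idea as the paper---solve the pair of attractor equations using the K3 period relations $\Omega_S^2=0$, $\Omega_S\overline{\Omega_S}>0$---but your execution differs in a few respects worth noting. The paper first invokes Proposition~\ref{attractor rank 2} to write $\Omega_S$ as a $\C$-linear combination of $u_1,u_2$ and then imposes $\Omega_S^2=0$ to obtain the quadratic in $\tau$; you instead write $W=C\Omega_S=u_1+\sqrt{-1}\,v$ and solve the real and imaginary parts directly, which is slightly more elementary and avoids the forward reference. For the positivity constraint, the paper argues via the signature of the transcendental lattice $T(S)\subset\Z u_1+\Z u_2$ to rule out $u_1^2\le 0$, whereas you extract $u_1^2>0$ straight from $W\overline{W}>0$; your route is cleaner here and does not rely on knowing in advance that $T(S)$ has signature $(2,\ast)$. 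Finally, you supply two steps that the paper leaves implicit: existence of an attractor when $\Z u_1+\Z u_2$ is positive definite (via surjectivity of the K3 period map), and an explicit second integral class $u_1^2\,\mathrm{Im}(\Omega_X)$ establishing rank~$2$. These additions make your proof more self-contained; the paper's version is terser but relies on Proposition~\ref{attractor rank 2} and the reader filling in the converse direction.
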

\begin{proof}
By Proposition \ref{attractor rank 2}, we obtain  
$$
\Omega_S=-\sqrt{-1}(\overline{\tau}u_1-u_2). 
$$
The global Torelli theorem asserts that the K3 surface $S$ is uniquely determined by $\Omega_S \in H^{2,0}(S)$, up to isomorphism.   
On the other hand, the Hodge--Riemann bilinear relation $\Omega_S \wedge \Omega_S=0$ implies
$$
u_1^2 \tau^2-2(u_1,u_2)\tau+u_2^2=0. 
$$
If $u_1^2=(u_1,u_2)=0$, then the transcendental lattice $T(S) \subset \Z u_1+ \Z u_2$ is degenerate and this is a contradiction. 
If $u_1^2=0$ and $(u_1,u_2) \ne 0$, then $\tau=\frac{u_2^2}{2(u_1,u_2)} \notin \HH$ and this is a contradiction. 
Hence $u_1^2 \ne 0$ and we get 
$$
\tau=\frac{(u_1,u_2)\pm\sqrt{-D_{u_1,u_2}}}{u_1^2}. 
$$ 
We need $D_{u_1,u_2}>0$ in order for $\tau$ to lie in $\HH$. 
If $u_1^2<0$, $T(S) \subset \Z u_1+ \Z u_2$ is negative definite and this is a contradiction. 
Therefore we conclude that $\Z u_1+ \Z u_2$ is positive definite.   
In this case, we have
$$
\tau=\frac{(u_1,u_2) +\sqrt{-D_{u_1,u_2}}}{u_1^2}
$$
Note that a change of the symplectic basis $dx,dy$ by an element of $\SL(2,\Z)$ does not change $\Omega_S$ and $\tau$. 
\end{proof}

At the complex attractor point, the N\'eron--Severi lattice is $NS(S)=(\Z u_1+ \Z u_2)^\perp$ and the Picard number $\rho(S)$ obtains its maximal possible value 20. 
A K3 surface with $\rho(S)=20$ is known as a singular K3 surface.  
It admits a rational map of degree 2 to a Kummer surface 
constructed from the product of two isogenous elliptic curves which have complex multiplications (a Shioda--Inose structure) \cite{SI}.  
Theorem \ref{K3 complex attractor} shows that any singular K3 surface appears as the second factor of some complex attractor variety $X=E \times S$.  

\begin{Rem}
A singular K3 surface is known as a rigid K3 surface as it does not admit any complex deformation keeping the property $\rho(S)=20$. 
\end{Rem}

\begin{Rem}
In the foundational article \cite{Dol}, Dolgachev formulated mirror symmetry for lattice polarized K3 surfaces. 
Although his formulation works beautifully in many case, a singular K3 surface has been an exception. 
The long-standing problem of mirror symmetry for singular K3 surfaces is recently settled in \cite{Kan} inspired by the results in this article (cf. Section \ref{Kahler constellation})
\end{Rem}

\begin{Thm}[\cite{SI}] 
There is a bijective correspondence between the isomorphism classes of singular K3 surfaces and the isomorphism classes of positive definite even lattices of rank 2.  
\end{Thm}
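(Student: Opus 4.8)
The plan is to exhibit the bijection explicitly as $S \mapsto T(S)$ and to verify it in both directions, combining Nikulin's lattice theory with the surjectivity of the period map and the global Torelli theorem for K3 surfaces. First I would record why $T(S)$ has the asserted type. Since $S$ is singular, $\rho(S)=20$, so $\rank T(S)=22-\rho(S)=2$; as a primitive sublattice of the even unimodular lattice $H^2(S,\Z)\cong U^{\oplus 3}\oplus E_8(-1)^{\oplus 2}$ it is even; and because $H^2(S,\Z)$ has signature $(3,19)$ while $\NS(S)$ has signature $(1,19)$ by the Hodge index theorem, the orthogonal complement $T(S)$ has signature $(2,0)$, i.e. is positive definite. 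The full Hodge structure is then encoded by the isotropic line $\C\Omega_S\subset T(S)\otimes\C$ (note $T(S)\otimes\C\cap H^{1,1}(S)=0$), together with the orientation of the positive-definite real plane $T(S)\otimes\R$ given by $(\re\Omega_S,\im\Omega_S)$; it is this oriented lattice — equivalently the associated $\SL(2,\Z)$-class of a positive definite even binary quadratic form — that the correspondence attaches to $S$.

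For injectivity I would argue as follows. Solving $\Omega\cdot\Omega=0$ in the complexification of a positive definite even rank-$2$ lattice shows that its weight-$2$ Hodge structure is determined, up to complex conjugation, by the lattice alone, the orientation singling out one of the two conjugate isotropic lines; hence any lattice isometry $T(S)\xrightarrow{\sim}T(S')$ compatible with orientations is automatically a Hodge isometry. By Nikulin's theorem a primitive embedding of a positive definite even rank-$2$ lattice into $U^{\oplus 3}\oplus E_8(-1)^{\oplus 2}$ exists and is unique up to isometry — the numerical hypotheses are comfortably met, since the signature $(2,0)$ fits inside $(3,19)$ and the discriminant group is generated by at most $2$ elements. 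Consequently $\NS(S')=T(S')^{\perp}\cong T(S)^{\perp}=\NS(S)$, and the orientation-compatible Hodge isometry of transcendental lattices extends to a Hodge isometry $H^2(S,\Z)\xrightarrow{\sim}H^2(S',\Z)$; after composing with $\pm 1$ and reflections in $(-2)$-classes to make it effective, the global Torelli theorem yields $S\cong S'$.

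For surjectivity, given a positive definite even rank-$2$ lattice $L$ with a chosen orientation, Nikulin's theorem again provides a primitive embedding $L\hookrightarrow \Lambda_{\mathrm{K3}}:=U^{\oplus 3}\oplus E_8(-1)^{\oplus 2}$. The equation $\omega\cdot\omega=0$ on $\mathbb{P}(L\otimes\C)$ has two conjugate solutions; the orientation picks one, a line $\C\omega$ with $\omega^2=0$ and $\omega\bar\omega>0$, which is a period point. Surjectivity of the period map produces a K3 surface $S$ with $H^{2,0}(S)=\C\omega$ under a marking $H^2(S,\Z)\cong\Lambda_{\mathrm{K3}}$. Since $\omega$ is isotropic and $L$ is positive definite, $\omega$ and $\bar\omega$ are linearly independent and span $L\otimes\C$; as $T(S)$ is the smallest primitive sublattice whose complexification contains $\omega$ (hence, being a sub-Hodge-structure, also $\bar\omega$), one gets $T(S)=L$, so $\rho(S)=20$ and $S$ is a singular K3 surface with transcendental lattice $L$. (Alternatively, following Shioda--Inose one may realize $L$ as the transcendental lattice of a product $E_1\times E_2$ of elliptic curves with complex multiplication and pass to the K3 surface with a Shioda--Inose structure over $\mathrm{Km}(E_1\times E_2)$.)

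I expect two points to be genuinely delicate. The first is the orientation bookkeeping: a bare positive definite even rank-$2$ lattice carries two orientations, which correspond to a singular K3 surface and its complex conjugate and need not give isomorphic surfaces, so the bijection is most honestly phrased for oriented lattices — equivalently for $\SL(2,\Z)$- rather than $\GL(2,\Z)$-equivalence classes of forms. The second, and technically the hardest step, is extending an orientation-compatible Hodge isometry of $T(S)$ to all of $H^2(S,\Z)$ and then reducing it to an effective isometry: this is exactly where Nikulin's uniqueness of primitive embeddings and the precise (effective) form of the global Torelli theorem enter, and it is the part I would expect to require the most care.
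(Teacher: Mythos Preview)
The paper does not prove this theorem; it is quoted from Shioda--Inose \cite{SI}, with only the remark afterwards that the bijection is $S\mapsto T(S)$. There is therefore no in-paper argument to compare against. Your outline is a correct sketch of the now-standard lattice-theoretic proof via Nikulin's embedding and uniqueness theorems together with the global Torelli theorem and surjectivity of the period map. For context, the original argument in \cite{SI} is more hands-on: surjectivity is obtained constructively by realising each form via a Shioda--Inose structure over $\mathrm{Km}(E_1\times E_2)$ for suitable CM elliptic curves (exactly the alternative you mention parenthetically), and injectivity proceeds through an explicit analysis of elliptic fibrations rather than a general lattice-extension result. Your flag on orientations is well taken and is indeed a mild imprecision in the statement as reproduced here: in \cite{SI} the bijection is with $\SL(2,\Z)$-classes of forms, i.e.\ oriented lattices.
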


The bijective correspondence is given by associating a singular K3 surface $S$ with its transcendental lattice $T(S)$.

\begin{Lem} \label{dense tau}
Let $L$ be a positive definite even lattices of rank 2. Then 
$$
Q_L=\Big\{ \frac{(u_1,u_2) +\sqrt{-D_{u_1,u_2}}}{u_1^2} \Big\}_{u_1,u_2} \subset \HH
$$
is dense, where $u_1,u_2$ run over $\Z$-linearly independent vectors in $L$. 
\end{Lem}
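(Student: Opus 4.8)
The plan is to reduce the density of $Q_L$ in $\HH$ to a statement about which quadratic points are hit, and then invoke the modular-forms/class-number philosophy that complex multiplication points are dense. First I would fix an even positive definite rank-$2$ lattice $L$ with Gram matrix $\begin{pmatrix} 2a & b \\ b & 2c \end{pmatrix}$ relative to a basis $e_1,e_2$, and write a general pair of $\Z$-linearly independent vectors as $u_1 = p e_1 + q e_2$, $u_2 = r e_1 + s e_2$ with $ps-qr \neq 0$. Then $u_1^2$, $(u_1,u_2)$, and $D_{u_1,u_2} = u_1^2 u_2^2 - (u_1,u_2)^2$ are explicit integral quadratic forms in $p,q,r,s$, and by the standard identity for $2\times 2$ determinants one gets $D_{u_1,u_2} = (ps-qr)^2 \det(L) = (ps-qr)^2(4ac - b^2)$. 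Hence $\tau = \dfrac{(u_1,u_2) + (ps-qr)\sqrt{-\det(L)}}{u_1^2}$, so every element of $Q_L$ is an imaginary quadratic irrationality in $\Q(\sqrt{-\det(L)})$, and conversely the task is to show these particular such points are dense.

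Next I would split the argument into controlling the imaginary part and the real part separately. For the imaginary part: by choosing $u_1 = n e_1$ (a long vector in a fixed direction) and $u_2$ ranging, $u_1^2 = 2an^2$ grows like $n^2$ while $\operatorname{Im}(\tau) = \dfrac{(ps-qr)\sqrt{\det(L)}}{u_1^2}$; by letting $ps-qr$ and $u_1^2$ range over suitable coprime-ish values (e.g. scale $u_1$ and $u_2$ independently, or use vectors of many different norms, which exist because a positive definite binary form represents infinitely many integers with controlled ratios) I can make $\operatorname{Im}(\tau)$ approximate any positive real number. For the real part: with $\operatorname{Im}(\tau)$ roughly pinned down by the ratio $(ps-qr)/u_1^2$, the real part $\operatorname{Re}(\tau) = (u_1,u_2)/u_1^2$ is a ratio of values of integral binary quadratic forms, and varying $u_2$ by adding multiples of $u_1$ shifts $(u_1,u_2)$ by multiples of $u_1^2$, giving $\operatorname{Re}(\tau) \in \frac{1}{u_1^2}\Z$ with the full residue structure, so after fixing $u_1^2 = N$ large the set of attainable $\operatorname{Re}(\tau)$ is $\tfrac{1}{N}\Z$-dense modulo $1$, hence dense in $\R$ as $N\to\infty$. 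Combining, one gets points of $Q_L$ in any prescribed small ball in $\HH$.

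The main obstacle I anticipate is the simultaneous control of both coordinates: making $\operatorname{Im}(\tau) = (ps-qr)\sqrt{\det L}/u_1^2$ land near a target $y_0$ forces a near-equality $(ps-qr)\approx y_0 u_1^2/\sqrt{\det L}$, and one must check this is achievable with genuinely integral data while $u_1^2 = N$ simultaneously takes a value for which $\tfrac1N\Z$ is fine enough — i.e. one needs the binary form $u_1 \mapsto u_1^2$ to represent arbitrarily large integers $N$ such that $N$ also admits vectors $u_2$ with the determinant $ps-qr$ in the required window. A clean way to sidestep the delicacy is to first observe it suffices to prove density after acting by $\SL(2,\Z)$ (since a change of the symplectic basis $dx, dy$ does not change $\tau$, as noted in the proof of Theorem \ref{K3 complex attractor}, the relevant points are $\SL(2,\Z)$-equivalence classes and it is enough to hit a dense subset of the fundamental domain), and then use that binary positive definite forms represent a positive density of integers (indeed, every integer in suitable arithmetic progressions, via genus theory / the fact that $\det L$ is fixed) to produce $u_1$ with $u_1^2 = N$ for $N$ in a set of positive density, at which point the $\tfrac1N\Z$-net argument for $\operatorname{Re}(\tau)$ and the ratio argument for $\operatorname{Im}(\tau)$ close the proof. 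The remaining verifications are elementary number theory about the representability of integers by a fixed binary quadratic form together with the $\SL(2,\Z)$-reduction, and I would carry those out as the routine part.
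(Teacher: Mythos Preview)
Your proposal contains the germs of a correct argument, but it is dramatically more complicated than necessary, and the part you flag as ``routine'' (representability by binary forms, $\SL(2,\Z)$-reduction) is precisely the part you should discard. The paper's proof is three lines: writing $\tau_{u_1,u_2}$ for the element of $Q_L$ attached to the pair $(u_1,u_2)$, one checks the two identities
\[
\tau_{ku_1,lu_2}=\tfrac{l}{k}\,\tau_{u_1,u_2},\qquad \tau_{u_1,\,ku_1+u_2}=k+\tau_{u_1,u_2}\qquad(k,l\in\Z).
\]
Thus $Q_L$ is stable under $\tau\mapsto q\tau$ for every $q\in\Q_{>0}$ and under $\tau\mapsto\tau+1$; the group these generate contains all rational translations (conjugate an integer translation by a scaling), so the orbit of any single $\tau_0\in Q_L$ is $\{q\tau_0+r:q\in\Q_{>0},\ r\in\Q\}$, which is dense in $\HH$.

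You actually discovered both identities: your remark that $u_2\mapsto u_2+ku_1$ shifts $(u_1,u_2)$ by $k\,u_1^2$ is exactly the second identity, and your parenthetical ``scale $u_1$ and $u_2$ independently'' is the first. But you did not combine them; instead you tried to hit a target $\tau$ directly by choosing $u_1,u_2$, and this led you into the simultaneous-control difficulty and the binary-forms detour. There is also a small slip: adding multiples of $u_1$ to $u_2$ shifts $\re(\tau)$ by \emph{integers}, not by elements of $\tfrac{1}{u_1^2}\Z$, so it does not by itself give ``the full residue structure'' --- you would need to vary $u_2$ over all of $L$, and then justify that enough residues are hit while $\im(\tau)$ stays controlled. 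This can be made to work (e.g.\ take $u_1=ne_1$, $u_2=re_1+se_2$, solve for $s$ then $r$), but once you see the two identities above the whole issue evaporates.
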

\begin{proof}
Let us write $\tau_{u_1,u_2}=\frac{(u_1,u_2) +\sqrt{-D_{u_1,u_2}}}{u_1^2}$. 
Then for $k,l \in \Z$ we have the elementary identities: 
$$
\tau_{ku_1,lu_2}= \frac{l}{k}\tau_{u_1,u_2}, \ \ \ \tau_{u_1,ku_1+u_2}=k+\tau_{u_1,u_2}. 
$$
They show that $Q_L \subset \HH$ is dense. 
\end{proof}

\begin{Thm}
For $X=E \times S$, the complex constellation $\Attr_{\mathrm{Cpx}}$ is dense in the complex moduli space $\mathfrak{M}_{\mathrm{Cpx}}$. 
\end{Thm}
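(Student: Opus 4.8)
The plan is to exploit the product structure of the complex moduli space. For $X=E\times S$ one has $H^1(X,TX)\cong H^1(E,TE)\oplus H^1(S,TS)\cong\C\oplus\C^{20}$ with no cross terms (because $H^1(S,\mathcal{O}_S)=0$ and $H^0(S,TS)=0$), so the Kuranishi family of $E\times S$ is a product of those of $E$ and $S$; moreover, under the K\"unneth decomposition $H^3(X)\cong H^1(E)\otimes H^2(S)$ the form $\Omega_X=dz\wedge\Omega_S$ corresponds to the pair $(\tau,[\Omega_S])$. Hence, up to the usual (harmless) non-Hausdorff subtleties for K3 surfaces, the period map identifies $\widetilde{\mathfrak{M}}_{\mathrm{Cpx}}$ with $\HH\times D_{K3}$, where $D_{K3}=\{[\Omega]\mid\Omega^2=0,\ \Omega\overline{\Omega}>0\}$ is the K3 period domain. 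Since $\pi$ is open and $\Attr_{\mathrm{Cpx}}=\pi(\bigcup_\gamma\widetilde{\Attr}_{\mathrm{Cpx}}(\gamma))$, it suffices to prove that $\bigcup_\gamma\widetilde{\Attr}_{\mathrm{Cpx}}(\gamma)$ is dense in $\HH\times D_{K3}$, and I would split this into: (i) the periods of singular K3 surfaces are dense in $D_{K3}$; and (ii) for each fixed singular K3 surface $S_0$, the set $\{\tau\in\HH\mid(\tau,[\Omega_{S_0}])\ \text{is a complex attractor for some}\ \gamma\}$ is dense in $\HH$. Granting both, a basic open set $U\times V$ meets $\Attr_{\mathrm{Cpx}}$: use (i) to find a singular $S_0$ with $[\Omega_{S_0}]\in V$, then (ii) to find $\tau\in U$ with $(\tau,[\Omega_{S_0}])\in\Attr_{\mathrm{Cpx}}$.

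Step (ii) is the heart of the matter. By Theorem \ref{K3 complex attractor}, a class $\gamma$ with $\gamma^{PD}=dx\otimes u_1+dy\otimes u_2$ produces a complex attractor exactly when $\Z u_1+\Z u_2$ is positive definite, and then the attractor is the single point $(\tau_{u_1,u_2},[\,\overline{\tau_{u_1,u_2}}u_1-u_2\,])$. The difficulty is that a single $\gamma$ pins down both coordinates at once, so to obtain many $\tau$'s lying over a prescribed $S_0$ one must move $(u_1,u_2)$ while keeping the projective class $[\,\overline{\tau_{u_1,u_2}}u_1-u_2\,]$ — equivalently, by the global Torelli theorem, the surface $S_0$ — fixed. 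The observation that makes this work is that the two substitutions behind Lemma \ref{dense tau}, namely $(u_1,u_2)\mapsto(ku_1,lu_2)$ and $(u_1,u_2)\mapsto(u_1,ku_1+u_2)$ for $k,l\in\Z$, do exactly that: a one-line computation gives $\overline{\tau_{ku_1,lu_2}}(ku_1)-lu_2=l(\overline{\tau_{u_1,u_2}}u_1-u_2)$ and $\overline{\tau_{u_1,ku_1+u_2}}u_1-(ku_1+u_2)=\overline{\tau_{u_1,u_2}}u_1-u_2$, so the K3 surface is unchanged, while by the proof of Lemma \ref{dense tau} the resulting values of $\tau$ move through a dense subset of $\HH$. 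Concretely I would fix a $\Z$-basis $(u_1^0,u_2^0)$ of $T(S_0)$, note that $T(S_0)$ is positive definite (as $S_0$ is singular) so $(\tau_0,[\Omega_{S_0}])=(\tau_{u_1^0,u_2^0},[\,\overline{\tau_0}u_1^0-u_2^0\,])$ is the attractor for the corresponding $\gamma$, with $\int_\gamma\Omega_X=-\sqrt{-1}\cdot 2D_{u_1^0,u_2^0}/(u_1^0)^2\ne0$ so that $Z\ne0$, and then apply the substitutions; each resulting pair still spans a positive definite rank-$2$ sublattice of $T(S_0)$ and still gives $Z\ne0$, so $(\tau,[\Omega_{S_0}])$ is a genuine complex attractor for a dense set of $\tau$.

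For step (i) I would invoke the classical density of singular K3 periods: $[\Omega]\in D_{K3}$ is a singular K3 period iff $\Omega$ lies in $L\otimes\C$ for a positive definite even rank-$2$ primitive sublattice $L\subset H^2(S,\Z)$, in which case $[\Omega]$ is one of the two conjugate roots of the conic $\{\Omega^2=0\}$ in $\mathbb{P}(L\otimes\C)$; given $[\Omega_0]\in D_{K3}$, the real $2$-plane spanned by $\mathrm{Re}\,\Omega_0$ and $\mathrm{Im}\,\Omega_0$ is positive definite, so approximating it by a rational $2$-plane and taking $L$ to be the intersection of that plane with $H^2(S,\Z)$ keeps positivity (an open condition) and produces a conic root close to $[\Omega_0]$; the transfer from periods to moduli is then surjectivity of the K3 period map together with global Torelli. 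The main obstacle in the whole argument is precisely the decoupling in step (ii): Theorem \ref{K3 complex attractor} attaches a rigidly linked pair $(\tau,S)$ to each $\gamma$, and the crux is to see that the rescalings underlying Lemma \ref{dense tau} slide $\tau$ densely across $\HH$ while leaving the period of $S$ — hence $S$ itself — fixed. The remaining ingredients (the product structure of $\mathfrak{M}_{\mathrm{Cpx}}$, the density of singular K3 surfaces, and the nonvanishing of $Z$) are classical or routine.
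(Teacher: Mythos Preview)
Your proposal is correct and follows essentially the same strategy as the paper: reduce to a product/fibration structure on the moduli, invoke density of singular K3 surfaces for the base, and use Lemma~\ref{dense tau} on the fibre over a fixed singular $S_0$. The paper obtains the fibration $p:\mathfrak{M}_{\mathrm{Cpx}}\to\mathfrak{M}_{\mathrm{Cpx}}^{K3}$ from Beauville's result $\Aut(E\times S)=\Aut(E)\times\Aut(S)$ rather than from your deformation-theoretic product decomposition, and it simply asserts the fibre density as a consequence of $Q_{T(S)}\subset\HH$ being dense; your explicit verification that the two substitutions behind Lemma~\ref{dense tau} leave $[\overline{\tau}u_1-u_2]$ (hence $S_0$) unchanged is exactly the point the paper leaves implicit, so in that respect your write-up is more complete.
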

\begin{proof}
By a result of Beauville \cite{Bea}, we have a canonical isomorphism $\Aut(E\times S) = \Aut(E)\times \Aut(S)$. 
Then there is a natural fibration $p:\mathfrak{M}_{\mathrm{Cpx}}\rightarrow \mathfrak{M}_{\mathrm{Cpx}}^{K3}$, 
where $ \mathfrak{M}_{\mathrm{Cpx}}^{K3}$ denotes the complex moduli space of K3 surfaces.  
The image $p(\Attr_{\mathrm{Cpx}}) \subset  \mathfrak{M}_{\mathrm{Cpx}}^S$ corresponds to the isomorphism classes of singular K3 surfaces and is hence dense. 
Therefore it suffices to show that $p^{-1}(s)\cap \Attr_{\mathrm{Cpx}} \subset p^{-1}(s)\cong \mathrm{SL}(2,\Z) \backslash \HH$ is dense 
where $s$ corresponds to a singular K3 surface $S$. 
This assertion follows from the fact that $Q_{T(S)} \subset \HH$ is dense (Lemma \ref{dense tau}). 
\end{proof}

The results presented in Sections \ref{complex attractor mechanism for torus} and \ref{complex attractor mechanism for  ExS} 
imply that the complex attractors have maximal Picard numbers possible. 
This kind of results do not hold for the Calabi--Yau 3-folds with $h^{2,1}(X)=0$ as their Picard numbers are topological. 
Nevertheless, the complex attractors are discrete and the complex attractor varieties possess complex rigidity.


\section{Weil--Petersson geometry on Bridgeland stability space} \label{WP stability conditions}

We provide a brief review of our previous work \cite{FKY}. 
It introduced a provisional mirror Weil--Petersson geometry on the space of Bridgeland stability conditions $\Stab(\DD_X)$, 
which can be thought of as an approximation of the K\"ahler moduli space $\mathfrak{M}_{\mathrm{Kah}}$. 

\subsection{Bridgeland stability conditions} \label{Stab}
Let $X$ be a smooth projective variety of dimension $n$. 
We define $\DD_X=\mathrm{D^bCoh}(X)$ to be the bounded derived category of coherent sheaves on $X$. 

The numerical Grothendieck group $\NN(\DD_X)=K(\DD_X)/K(\DD_X)^{\perp_{\chi}}$ is the quotient group of the Grothendieck group $K(\DD_X)$ by the null group $K(\DD_X)^{\perp_{\chi}}$ of the Euler form $\chi$. 
It is a free abelian group is of rank $\sum_{i=0}^nh^{i,i}(X)$.  

\begin{Def}[\cite{Bri1}] 
A (Bridgeland) stability condition $\sigma=(\ZZ,\PP)$ on $\DD_X$ consists of a group homomorphism $\ZZ:\NN(\DD_X)\rightarrow\C$ 
and a collection $\PP=\{\PP(\phi)\}_{\phi \in \R}$ of full additive subcategories of $\DD_X$ parametrized by $\phi \in \R$ such that: 
\begin{enumerate}
\item If $0\neq F \in \PP(\phi)$, then $\ZZ(F)\in\R_{>0} \ e^{\sqrt{-1} \pi \phi}$.
\item $\PP(\phi+1)=\PP(\phi)[1]$. 
\item If $\phi_1>\phi_2$ and $A_i \in \PP(\phi_i)$, then $\Hom_{\DD_X}(A_1,A_2)=0$. 
\item For every $0 \ne F \in \mathcal{D}_X$, there exists a sequence of exact triangles
$$
\xymatrix{
0=F_0 \ar[r] & F_1\ar[d]  \ar[r] & F_2 \ar[r] \ar[d]& \cdots \ar[r]& F_{k-1}\ar[r] \ar[d] & F \ar[d]\\
                    & A_1 \ar@{-->}[lu]& A_2 \ar@{-->}[lu] &  & A_{k-1} \ar@{-->}[lu]  & A_k  \ar@{-->}[lu] 
}
$$
such that $A_i \in \PP(\phi_i)$ and $\phi_1>\phi_2>\cdots>\phi_k$.
\item (Support property \cite{KS}) There exist a constant $C>0$ and a norm $|\mkern-2mu|-|\mkern-2mu|$
on $\NN(\DD_X) \otimes_\Z \R$ such that $|\mkern-2mu|F|\mkern-2mu|\leq C|\ZZ(F)|$ for any semistable object $E$.
\end{enumerate}
$\ZZ$ is called a central charge and an element $A \in \PP(\phi)$ is called a semistable object of phase $\phi$. 
\end{Def}

We denote by $\Stab(\DD_X)$ the set of stability conditions on $\DD_X$. 
There is a nice topology on it such that the forgetful map
$$
\Stab(\DD_X)\longrightarrow \Hom(\NN(\DD_X),\C), \ \ \ \sigma=(\ZZ,\PP) \mapsto \ZZ
$$
is a local homeomorphism \cite{Bri1, KS}. 
In other words, the deformations of the central charge lift uniquely to deformations of the stability condition. 
Therefore $\Stab(\DD_X)$ naturally becomes a complex manifold, locally modelled on the $\C$-vector space $\Hom(\NN(\DD_X),\C) \cong \C^{\sum_{i=0}^nh^{i,i}(X)}$.  

Moreover, $\Stab(\DD_X)$ naturally carries a right action of the group $\widetilde{\GL}^+(2,\R)$, 
the universal covering of the group of orientation-preserving linear transformations $\GL^+(2,\R)$,  
as well as a left action of the group $\Aut(\DD_X)$ of autoequivalences of $\DD_X$. 
The $\widetilde{\GL}^+(2,\R)$-action is given by post-composition on the central charge $\ZZ:\NN(\DD_X)\rightarrow \C \cong \R^2$ with a suitable relabelling of the phases $\phi$. 
We often restrict this action to the subgroup $\C \subset\widetilde{ \GL}^+(2,\R)$ which acts freely.


\subsection{Central charge via twisted Mukai pairing}\label{phyexp}
The Mukai pairing on $H^*(X,\C)$ is defined, for $v,w\in H^*(X,\C)$, 
$$
\langle v,w\rangle=\int_X e^\frac{c_1(X)}{2} v^{\vee}  w, 
$$
where $v=\sum_j v_j\in\oplus_j H^j(X,\C)$ and its Mukai dual $v^{\vee}=\sum_j\sqrt{-1}^jv_j$. 
Note it differs from the usual Mukai pairing for K3 surfaces by a sign. 
We define a twisted Mukai vector of $F\in\DD_X$ by 
$$
v_\Lambda(F)=\ch(F)\sqrt{\Td_X} e^{\sqrt{-1}\Lambda}
$$
for any $\Lambda \in H^*(X,\C)$ such that $\Lambda^\vee=-\Lambda$. 
A twisted Mukai pairing is compatible with the Euler pairing; 
by the Hirzebruch--Riemann--Roch theorem, 
\begin{equation}
\chi(E,F)=\int_X \ch(E^\vee) \ch(F)\Td_X= \langle v_\Lambda(E),v_\Lambda(F)\rangle. \label{HRR}
\end{equation}
A geometric twisting $\Lambda_X$ compatible with the integral structure on the quantum cohomology was introduced by Iritani \cite{Iri} and Katzarkov--Kontsevich--Pantev \cite{KKP}. 

It is called the the log Gamma class and, in the Calabi--Yau case, we can explicitly write it as
$$
\Lambda_X=-\frac{\zeta(3)}{(2 \pi)^3}c_3(X)+\frac{\zeta(5)}{(2\pi)^5}(c_5(X)-c_2(X)c_3(X))+\dots
$$
For K3 and abelian surfaces, there is no effect of twisting as $\Lambda_X=0$. 
For Calabi--Yau 3-folds, the modification is given by the first term, which is familiar in the B-model period computations.   
\begin{Def}
We define $v_X(F)$ to be the twisted Mukai vector of $F \in \NN(\DD_X)$ associated to the log Gamma class $\Lambda_X$, namely 
$$
v_X(F)=\ch(F)\sqrt{\Td_X} e^{\sqrt{-1}\Lambda_X}
$$
\end{Def}

Let $X$ be a projective Calabi--Yau manifold equipped with a complexified K\"ahler parameter 
$$
\omega=B+\sqrt{-1} \kappa \in H^{1,1}(X,\C),
$$   
where $\kappa$ is a K\"ahler class. 
The set of such classes is the complexified K\"ahler cone $\mathcal{K}^\C_X$. 
We set $q=e^{2 \pi \sqrt{-1}\omega}$.

\begin{Conj}[Bridgeland \cite{Bri1}]  \label{Bridgeland Conj}
Let $\mathfrak{M}_{\mathrm{Cpx}}$ be the K\"ahler moduli space of a projective Calabi--Yau manifold $X$.  
Then there exists an embedding 
$$
\mathfrak{per}^\vee: \mathfrak{M}_{\mathrm{Kah}} \hookrightarrow \mathrm{Aut}(\DD_X) \backslash \Stab(\DD_X)/ \C.
$$
The complexified K\"ahler cone $\mathcal{K}_X^\C$ gives a local chart of $\mathfrak{M}_{\mathrm{Kah}}$ and, 
near the large volume limit, 
there exists a stability condition $\sigma_\omega$ with central charge of the form 
\begin{equation} \label{Central charge}
\ZZ_{\sigma_\omega}(F)=-\left \langle (2\pi \sqrt{-1})^{-\frac{\deg}{2}}J(-2\pi \sqrt{-1}\omega), v_X(F) \right\rangle. 
\end{equation}
Here $J(\tau)=J(\tau,1)$ denotes the $J$-function of $X$ evaluated at the spectral parameter $z=1$ 
and $\deg$ is the degree operator defined by $\deg(\alpha)=2p\alpha$ for $\alpha \in H^{p,p}(X)$. 
This expression was introduced by Iritani in his study of integral structures of quantum cohomology \cite{Iri}. 
$\ZZ_{\sigma_\omega}$ is called the quantum cohomology central charge (cf. Hosono \cite{Hos}).  
The embedding $\mathfrak{per}^\vee$ is comparable with a period map in mirror symmetry.  
\end{Conj}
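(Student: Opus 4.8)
The plan is to decompose the conjecture into three largely independent pieces: (i) the existence of a distinguished connected component (or chart) of $\Stab(\DD_X)$ near the large volume limit, on which semistability is governed by a tilt of $\mathrm{Coh}(X)$; (ii) the identification of the central charge on that chart with the quantum cohomology central charge \eqref{Central charge}; and (iii) the construction of the period embedding $\mathfrak{per}^\vee$, including the verification that it is injective and that $\mathcal{K}^\C_X$ gives a local chart of $\mathfrak{M}_{\mathrm{Kah}}$.

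For (i) I would follow the double-tilting strategy of Bayer--Macr\`i--Toda: starting from slope (or Gieseker) stability one tilts $\mathrm{Coh}(X)$ once to obtain a weak stability condition, tilts again to produce a candidate heart $\mathcal{A}^{\omega}\subset\DD_X$, and then reduces the existence of a genuine Bridgeland stability condition --- in particular the support property (5) --- to a Bogomolov--Gieseker-type inequality for the (twisted) Chern characters $\ch_{\le 3}$ of tilt-semistable objects. This is where essentially all of the difficulty lies: such a cubic inequality in dimension three is known only in special cases (abelian $3$-folds after Maciocia--Piyaratne, certain Fano $3$-folds, the quintic threefold, and a few others), and its validity for a general projective Calabi--Yau $3$-fold is the principal open problem. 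In practice I would either restrict the class of $X$ to those for which the inequality is available, or else isolate the precise conjectural inequality as the single remaining hypothesis and proceed conditionally on it.

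Granting (i), step (ii) becomes a statement about the $\hat{\Gamma}$-integral structure on quantum cohomology. The tilted stability condition of step (i) has central charge given by the \emph{classical} polynomial $-\int_X e^{-\omega}\,v_X(F)$, which is holomorphic in $\omega$ and is exactly the leading ($q\to 0$) term of the pairing in \eqref{Central charge}; Iritani's work shows that the $J$-function completes this polynomial to a holomorphic function whose higher-order terms are precisely the instanton corrections predicted by mirror symmetry. I would then invoke the deformation theorem recalled in Section \ref{Stab}: since the forgetful map $\Stab(\DD_X)\to\Hom(\NN(\DD_X),\C)$ is a local homeomorphism, the holomorphic family of homomorphisms $\{\ZZ_{\sigma_\omega}\}_{\omega}$ lifts uniquely to a holomorphic family of stability conditions, and an analytic-continuation argument near the large volume limit identifies this lift with the one built in (i), so that $\sigma_\omega$ indeed has central charge \eqref{Central charge}.

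Finally, for (iii) I would define $\mathfrak{per}^\vee$ on the large-volume chart by $\omega\mapsto[\sigma_\omega]$ in $\Aut(\DD_X)\backslash\Stab(\DD_X)/\C$, and check that passing to the double quotient kills exactly the large-volume monodromy $\omega\mapsto\omega+\NS(X)$ (realized by $-\otimes\mathcal{O}(D)$) together with the overall $\C$-rescaling, so that $\mathcal{K}^\C_X$ descends to an open chart; the remaining boundary points (conifold, orbifold, $\dots$) would be covered by gluing charts via the known wall-crossing behaviour. Injectivity of $\mathfrak{per}^\vee$ reduces to a Torelli-type assertion for the A-model variation of Hodge structure, which should follow already from the fact that the classical part of the central charge determines $\omega$ near large volume. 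To reiterate, the hard part is step (i): without a Bogomolov--Gieseker inequality in dimension three the construction of the stability conditions themselves is unavailable, and the rest of the argument is comparatively routine bookkeeping resting on Iritani's integral structure and Bridgeland's deformation theorem.
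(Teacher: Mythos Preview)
The statement you are attempting to prove is labelled in the paper as a \emph{Conjecture} (Conjecture~\ref{Bridgeland Conj}), not a theorem, and the paper makes no attempt to prove it. There is therefore no proof in the paper to compare your proposal against: the authors invoke the conjecture as background and motivation (and later use only the well-defined quantum cohomology central charge $\ZZ_{\sigma_\omega}$, not the existence of the stability condition $\sigma_\omega$ itself --- see the remark following the proof of Theorem~\ref{Legendre immersion}).

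Your outline is a reasonable summary of the current state of the art and correctly locates the principal obstruction in step (i), namely a Bogomolov--Gieseker-type inequality for tilt-semistable objects on a Calabi--Yau $3$-fold. But you should not present this as a proof: the inequality is open in general, and even granting it, the global statement about $\mathfrak{per}^\vee$ (gluing charts across conifold/orbifold loci, injectivity away from large volume) is genuinely conjectural and not ``routine bookkeeping.'' The honest conclusion is that the conjecture remains open, which is exactly why the paper states it as such.
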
 
The asymptotic behavior of the quantum cohomology central charge $\ZZ_{\sigma_\omega}(F)$ is given by  
$$
\ZZ_{\sigma_\omega}(F) \sim -\int_X e^{- \omega} v_X(F)+ O(q). 
$$
The existence of a stability condition with the asymptotic central charge given by the leading term has been proven for various examples 
including K3 surfaces, abelian surfaces \cite{Bri2}, and abelian 3-folds \cite{BMS, MP}.


\subsection{Weil--Petersson geometry}
In this subsection we assume $X$ is a projective Calabi--Yau $n$-fold.  
Then the Serre duality implies that for $E, F \in \DD_X$, there is a natural functorial isomorphism
$$
\mathrm{Hom}^{*}_{\DD_X}(E,F)\cong\mathrm{Hom}^*_{\DD_X}(F,E[n])^{\vee}.
$$
An important consequence of the Calabi--Yau condition is that the Euler form on $\NN(\DD_X)$ is (skew-) symmetric if $n$ is even (odd). 
The following bilinear form is inspired by Proposition \ref{WPformula}. 

\begin{Def}
Let $\{F_i\}$ be a basis of  $\NN(\DD_X)$.  
We define a bilinear form $\mathfrak{b}: \Hom(\NN(\DD_X),\C)^2 \rightarrow \C$ by
$$
(\ZZ_1, \ZZ_2) \mapsto \sum_{i,j} \chi^{i,j}   \ZZ_1(F_i)\ZZ_2(F_j),  
$$
where $(\chi^{i,j})=(\chi(F_i,F_j))^{-1}$. 
Then $\mathfrak{b}$ is independent of the choice of a basis.  
\end{Def}
We think of $\Hom(\NN(\DD_X),\C)$ as the tangent space of $\Stab(\DD_X)$ at a point. 
Therefore $ \mathfrak{b}$ defines a holomorphic symplectic structure on $\Stab(\DD_X)$ for odd $n$.

\begin{Def} \label{Stab+}
We define $\Stab^+(\DD_X) \subset \Stab(\DD_X)$ by 
$$
\Stab^+(\DD_X)=\{ \sigma=(\ZZ,\PP) \ | \ \mathfrak{b}(\ZZ,\ZZ)=0, \ (\sqrt{-1})^{-n} \mathfrak{b}(\ZZ,\overline{\ZZ})>0\}.  
$$
If $n$ is odd, the first condition is vacuous as $\mathfrak{b}$ is skew-symmetric. 
\end{Def}

\begin{Rem}
It is worth mentioning that $\Stab^+(\DD_X)$ is an analogue of a period domain in the Hodge theory and the defining equations are an analogue of the Hodge--Riemann bilinear relations. 
The natural free $\C$-action on $\Stab(\DD_X)$ preserves $\Stab^+(\DD_X)$. 
\end{Rem}

\begin{Def}\label{WPpotential}
Let $s=(\ZZ_{\bar{\sigma}},\PP_{\bar{\sigma}})$ be a holomorphic local section of the $\C$-torsor $\Stab^+(\DD_X)\rightarrow\Stab^+(\DD_X)/\C$. 
Then 
\begin{equation}
K^A(\bar{\sigma})= -\log \left((\sqrt{-1})^{-n} \mathfrak{b}(\ZZ_{\bar{\sigma}},\overline{\ZZ_{\bar{\sigma}}}) \right) \label{AWP potential}
\end{equation}
defines a local smooth function on $\Stab^+(\DD_X)/\C$. 
We call $K^A$ the A-model Weil--Petersson potential.  
\end{Def}

\begin{Prop}[{\cite[Proposition 3.5]{FKY}}]
The complex Hessian $g^A=\frac{\sqrt{-1}}{2}\partial\overline{\partial}K^A$ of the A-model Weil--Petersson potential $K^A$ is independent of the choice of a local section $s$. 
Moreover, it descends to the quotient
$$
\Aut(\DD) \backslash\Stab^+(\DD_X)/ \C
$$
away from the singular loci. 
\end{Prop}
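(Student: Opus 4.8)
The plan is to exploit the $\C$-torsor structure of $\Stab^+(\DD_X) \to \Stab^+(\DD_X)/\C$ and examine how the potential $K^A$ transforms under a change of holomorphic local section. First I would note that any two holomorphic local sections $s, s'$ of the torsor differ by a holomorphic $\C$-valued function: $\ZZ_{\bar\sigma}' = e^{c(\bar\sigma)}\ZZ_{\bar\sigma}$ for some holomorphic $c$ (the $\C$-action rescales the central charge; this is the restriction of the $\widetilde{\GL}^+(2,\R)$-action to the free $\C$-subgroup, acting by post-composition). Since $\mathfrak{b}$ is bilinear, $\mathfrak{b}(\ZZ',\overline{\ZZ'}) = e^{c + \bar c}\,\mathfrak{b}(\ZZ,\overline\ZZ)$, so
\begin{equation}
K^A(\bar\sigma)' = K^A(\bar\sigma) - c(\bar\sigma) - \overline{c(\bar\sigma)} = K^A(\bar\sigma) - 2\,\re\, c(\bar\sigma). \notag
\end{equation}
Because $c$ is holomorphic, $\partial\overline\partial(c + \bar c) = 0$, and therefore $\partial\overline\partial K^A$ is unchanged. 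This is exactly the statement that $g^A = \frac{\sqrt{-1}}{2}\partial\overline\partial K^A$ is independent of the choice of local section, and it shows $g^A$ is a well-defined $(1,1)$-form on $\Stab^+(\DD_X)/\C$. (That it is a genuine Hermitian metric, i.e. positive, is the content of a separate computation identifying its Hessian, as in the proof of Theorem \ref{minimizer}; here we only need well-definedness and descent.)

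Next I would address descent along the $\Aut(\DD_X)$-action. An autoequivalence $\Phi \in \Aut(\DD_X)$ acts on $\Stab^+(\DD_X)$ by $\sigma = (\ZZ,\PP) \mapsto (\ZZ \circ \Phi_*^{-1}, \Phi(\PP))$, where $\Phi_*$ is the induced automorphism of $\NN(\DD_X)$. The key point is that $\Phi_*$ preserves the Euler form $\chi$ (autoequivalences preserve $\Hom$-spaces and hence $\chi(E,F) = \sum (-1)^i \dim\Hom^i(E,F)$), so it preserves the inverse matrix $(\chi^{i,j})$ in the appropriate sense; concretely, if $\{F_i\}$ is a basis then $\{\Phi(F_i)\}$ is again a basis and the bilinear form $\mathfrak{b}$ computed with either basis agrees (by the basis-independence already asserted in the definition of $\mathfrak{b}$). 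Hence $\mathfrak{b}(\ZZ\circ\Phi_*^{-1}, \overline{\ZZ\circ\Phi_*^{-1}}) = \mathfrak{b}(\ZZ,\overline\ZZ)$, so $K^A$ — and a fortiori $g^A$ — is $\Aut(\DD_X)$-invariant. Combined with the $\C$-invariance of $g^A$ established above, $g^A$ descends to $\Aut(\DD_X)\backslash\Stab^+(\DD_X)/\C$, at least on the locus where the action is nice enough for the quotient to be a (complex orbifold or manifold) — which is the meaning of "away from the singular loci."

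The main obstacle I anticipate is bookkeeping rather than conceptual: one must be careful that the holomorphic section $s$ exists locally (which follows since $\Stab^+(\DD_X) \to \Stab^+(\DD_X)/\C$ is a holomorphic principal $\C$-bundle, $\C$ being the free group acting), and that the quantity $(\sqrt{-1})^{-n}\mathfrak{b}(\ZZ,\overline\ZZ)$ is genuinely positive on $\Stab^+(\DD_X)$ so that its logarithm is defined — but this is built into the definition of $\Stab^+(\DD_X)$ (Definition \ref{Stab+}). A secondary subtlety is the behavior of the $\Aut(\DD_X)$-action: elements can act with fixed points, and the quotient is only an orbifold; the phrase "away from the singular loci" is precisely the hedge against this, and no further argument is needed there beyond noting that the $(1,1)$-form, being locally pulled back from an invariant form, descends on the smooth part of the quotient. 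I expect the bulk of the real work to lie in the companion positivity/Hessian computation referenced from \cite{FKY}, not in the descent statement itself, which is essentially the chain-rule observation above.
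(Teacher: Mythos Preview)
Your argument is correct and is exactly the standard one: a change of holomorphic section multiplies $\ZZ$ by a nowhere-vanishing holomorphic scalar, which shifts $K^A$ by a pluriharmonic function and hence leaves $\partial\overline\partial K^A$ unchanged; $\Aut(\DD_X)$ preserves $\chi$ and therefore $\mathfrak{b}$, so $K^A$ and $g^A$ are invariant. The paper itself does not supply a proof here but cites \cite[Proposition~3.5]{FKY}, whose proof is precisely this computation, so your approach coincides with the intended one.
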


We call $g^A$ the A-model Weil--Petersson metric on $\Aut(\DD_X) \backslash\Stab^+(\DD_X)/ \C$. 
Note that $g^A$ is in general a degenerate metric. 
The following examples are discussed in \cite{FKY}.

\begin{Ex} \label{ell curve}
Let $X$ an elliptic curve. 
Since $\widetilde{\GL}^+(2,\R)$-action on $\Stab(\DD_X)$ is free and transitive, we observe
$$
\Stab^+(\DD_X)=\Stab(\DD_X)  \cong \widetilde{\GL}^+(2,\R) \cong \C \times \HH. 
$$
Therefore we conclude
$$
\Aut(\DD_X) \backslash\Stab^+(\DD_X)/ \C \cong \mathrm{SL}(2,\Z) \backslash \HH. 
$$
This is indeed the expected K\"ahler moduli space of $X$. 
Up to the $\C$-action, the central charge at $\tau\in\HH$ is given by
$$
\ZZ(F)=-\deg(F)+ \tau \ \rank(F).
$$ 
Since $K(\DD_X)=\Z \mathcal{O}_X \oplus \Z \mathcal{O}_p$, the A-model Weil--Petersson potential is 
\begin{align}
K^A(\tau)&=-\log\left((\sqrt{-1})^{-1}
(\mathcal{Z}(\mathcal{O}_p)\overline{\mathcal{Z}}(\mathcal{O}_X)-
\mathcal{Z}(\mathcal{O}_X)\overline{\mathcal{Z}}(\mathcal{O}_p))\right) \notag \\
&=-\log(\im(\tau))-\log2. \notag
\end{align}
This is the Poincar\'e potential on $\HH$ and descends to $\mathrm{SL}(2,\Z) \backslash \HH$. 
Therefore $g^A$ is the Poincar\'e metric. 
This computation is compatible with the fact that a mirror of an elliptic curve is an elliptic curve.
\end{Ex}

\begin{Ex}
Let $X$ be the self-product $E_\tau \times E_\tau$ of an elliptic curve $E_\tau=\C/(\Z + \tau \Z)$.  
Then there is an identification 
$$
\overline{\Aut}_\CY(\DD_X)\backslash\Stab^+(\DD_X)/\C^\times\cong
\mathrm{Sp}(4,\Z) \backslash \mathfrak{H}_2
$$
where $\overline{\Aut}_\CY(\DD_X)$ is an appropriate group induced by the group of autoequivalences.   
Moreover, the A-model Weil--Petersson metric on the LHS is identified with
the Bergman metric on the RHS (a Siegel modular variety).  
This result is compatible with the mirror symmetry between $X$ and a principally polarized abelian surface. 
The complex moduli space of the latter is given by $\mathrm{Sp}(4,\Z) \backslash \mathfrak{H}_2$. 
\end{Ex}

\begin{Ex}
Let $X \subset \mathbb{P}^4$ be a quintic Calabi--Yau 3-fold, for which the existence of a stability condition is recently proven by Li \cite{Li}. 
Let $\tau H \in H^2(X,\C)$ be the complexified K\"ahler class, where $H$ is the hyperplane class and $\tau \in \HH$. 
Then we have 
$$
J(2\pi  \sqrt{-1}\tau H,z)=e^{\frac{2\pi  \sqrt{-1}\tau H}{z}}
( 1 + \frac{1}{z^2} \sum_{d\ge0} N_d^X q^d d \frac{H^2}{5} - \frac{2}{z^3} \sum_{d \ge 0} N_d^X q^d \frac{H^3}{5} )
$$
where $q=e^{2\pi  \sqrt{-1} \tau}$ and $N_d^X$ denotes the genus 0 Gromov--Witten invariant of $X$ of degree $d$ (cf. \cite{CK}).  
Then we have 
\begin{align*}
J(2\pi  \sqrt{-1}\tau H) = & e^{2\pi  \sqrt{-1}\tau H} ( 1 +  \frac{1}{5} \sum_{d\ge0} N_d^X q^d d H^2 - \frac{2}{5} \sum_{d \ge 0} N_d^X q^d H^3 ) \\
 =&  1 + 2\pi  \sqrt{-1}\tau H + (\frac{1}{2}(2\pi  \sqrt{-1}\tau)^2 + \frac{1}{5} \sum_{d\ge0} N_d^X q^d d ) H^2 \\
 & + (\frac{1}{6}(2\pi  \sqrt{-1}\tau)^3 + \frac{1}{5} 2\pi  \sqrt{-1}\tau \sum_{d\ge0} N_d^X q^d d - \frac{2}{5} \sum_{d \ge 0} N_d^X q^d)H^3. 
\end{align*}
Therefore the quantum central charge reads
\begin{align*}
\ZZ_{\sigma_{\tau H}}(F)=&-\left \langle (2\pi \sqrt{-1})^{-\frac{\deg}{2}}J(-2\pi \sqrt{-1}\tau), v_X(F) \right \rangle \\
=& -\int_X e^{-\tau H}v_X(F) + 2 \ch_0(F) (\pi  \sqrt{-1}\tau \sum_{d\ge0} N_d^X q^d d + \sum_{d \ge 0} N_d^X q^d) \\
&+  \frac{1}{5} \sum_{d\ge0} N_d^X q^d d \int_X \ch_1(F) H^2
\end{align*}

Hence, near the large volume limit, the A-model Weil--Petersson potential is given by
\begin{align*}
K^A(\tau) =  -\log(\frac{2^3 \cdot 5}{3!} \im(\tau)^3) +O(q). \notag
\end{align*}
and hence $g^A$ is a quantum deformation of the Poincar\'e metric.  
In particular, for sufficiently small $q$, it is non-degenerate and the Weil--Petersson distance to the large volume limit is infinite. 
\end{Ex}


\section{K\"ahler attractor mechanism} \label{Kahler attractor}

We will introduce the provisional definition of the K\"ahler attractor mechanism mirror to the complex attractor mechanism. 
Throughout this section $X$ is a projective Calabi--Yau 3-fold. 
Let $k=\dim H^{1,1}(X)$ be the expected dimension of the complexified K\"ahler moduli space. 
We define $H^{ev}(X,\C)=\oplus_{i=0}^3H^{i,i}(X,\C)$.  


\subsection{K\"ahler attractor mechanism of stability space}

We first investigate some fundamental structures on $\Stab^+(\mathcal{D}_X)$.  

\begin{Def}
For $\sigma=(\mathcal{Z},\mathcal{P}) \in \Stab^+(\mathcal{D}_X)$, we define the normalized K\"ahler central charge of $F \in \NN(\DD_X)$ by 
$$
V(\sigma,F)= e^{\frac{K^A(\sigma)}{2}} \ZZ(F), 
$$
where by abuse of notation $K^A(\sigma)$ is given by the Equation (\ref{AWP potential}) ($K^A(\sigma)$ really depends on $\sigma$, not on the class $\bar{\sigma}$).   
\end{Def}

Then $V(-,F)$ defines a smooth function
$$
V(-,F): \Stab^+(\DD_X) \longrightarrow \C. 
$$
Since the absolute value $W(\sigma,F)$ is invariant under the $\C$-action, we obtain a function
$$
|V(-,F)|: \Stab^+(\DD_X)/\C \longrightarrow \R_{\ge 0}. 
$$
We call it the K\"ahler mass function of $F \in \NN(\DD_X)$.  

Let us recall some basic facts about $\Stab^+(\DD_X)/\C$, a projectivization of a holomorphic symplectic manifold $\Stab^+(\DD_X)$. 
The tangent space of $\Stab^+(\DD_X)/\C$ at $\sigma=(\ZZ,\PP)$ is naturally identified with $ \Hom(\NN(\DD_X),\C)/\C\ZZ$, 
and hence $\Stab^+(\DD_X)/\C$ is a holomorphic contact manifold with the canonical contact form 
$$
\theta= \mathfrak{b}(d\ZZ,\ZZ). 
$$

There is a precise relation between Legendrian and Lagrangian submanifolds: the lift of a Legendrian submanifold in $\Stab^+(\DD_X)/\C$ is a Lagrangian submanifold in $\Stab^+(\DD_X)$. 
Moreover, a Legendrian subspace $L$ defines a polarized Hodge structure of weight 3 on $ \Hom(\NN(\DD_X),\Z)$ equipped with $\mathfrak{b}$, 
namely 
$$
H^{3,0}=\C \ZZ, \ \ \ H^{3,0} \oplus H^{2,1}=\pi^{-1}(L),
$$
where $\pi:\Stab^+(\DD_X) \rightarrow \Stab^+(\DD_X)/\C$ is the quotient map.  

\begin{Thm} \label{Kahler attractor on stab}
Let $F \in \NN(\DD_X)$ and $\sigma=(\mathcal{Z},\mathcal{P})\in\Stab^+(\mathcal{D}_X)/\C$ such that $\mathcal{Z}(F)\neq 0$. 
Given a Legendrian subspace $L\subset \Hom(\NN(\DD_X),\C)/\C\ZZ$, 
then $\sigma$ is a stationary point of the K\"ahler mass function $|V(-,F)|$ along the $L$-direction if and only if 
$$
\chi(F,-) = \mathrm{Re}(C \ZZ(-)) \ \ \ (\exists  C \in \C)
$$
holds in $\Hom(\NN(\DD_X),\C)$. 
\end{Thm}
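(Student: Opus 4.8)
The plan is to mirror, step for step, the proof of Theorem~\ref{attractor eq them}, under the dictionary in which the central charge $\ZZ$ of $\sigma$ plays the role of the holomorphic volume form $\Omega_{X_z}$; the pair $(\Hom(\NN(\DD_X),\C),\mathfrak{b})$ plays the role of $(H^3(X,\C),\int_X(-)\wedge(-))$; the Lagrangian lift $\pi^{-1}(L) = H^{3,0}\oplus H^{2,1}$ of the Legendrian subspace plays the role of $F^2H^3(X)$, so that the summand $H^{2,1}$ (a complement of $\C\ZZ$ inside $\pi^{-1}(L)$) is the mirror of the Kodaira--Spencer space $H^{2,1}(X)$; and the integral class $\chi(F,-)\in\Hom(\NN(\DD_X),\Z)$ plays the role of the Poincar\'e dual $\gamma^{PD}\in H^3(X,\Z)$.

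First I would fix a local chart. Since $n=3$ makes $\mathfrak{b}$ skew-symmetric, $\Stab^+(\DD_X)$ is open in $\Stab(\DD_X)$ and hence locally identified, via $\sigma\mapsto\ZZ$, with $\Hom(\NN(\DD_X),\C)$. Choosing a basis $\ZZ_1,\dots,\ZZ_k$ of a complement of $\C\ZZ$ inside $\pi^{-1}(L)$, which we may take to lie in $H^{2,1}$, the deformation $\ZZ' = \ZZ + \sum_i\epsilon_i\ZZ_i$ furnishes holomorphic coordinates $\epsilon = (\epsilon_1,\dots,\epsilon_k)$ for a neighborhood of $\sigma$ inside the $L$-direction of $\Stab^+(\DD_X)/\C$ (the A-side counterpart of the Kodaira--Spencer/Bogomolov--Tian--Todorov coordinatization used for Theorem~\ref{attractor eq them}). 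Two vanishings are needed: $\mathfrak{b}(\ZZ_i,\ZZ_j) = \mathfrak{b}(\ZZ,\ZZ_i) = 0$ because $\pi^{-1}(L)$ is Lagrangian, and $\mathfrak{b}(\ZZ_i,\overline{\ZZ}) = 0$ because the first Hodge--Riemann relation for the polarized Hodge structure attached to $L$ gives $\mathfrak{b}(H^{2,1},H^{0,3}) = 0$.

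Next comes the first-variation computation, exactly as in Theorem~\ref{attractor eq them}. Writing
\[
|V(\sigma',F)|^2 = \frac{|\ZZ'(F)|^2}{(\sqrt{-1})^{-3}\,\mathfrak{b}(\ZZ',\overline{\ZZ'})}
\]
and differentiating holomorphically in $\epsilon_i$ at $\epsilon=0$, the numerator contributes $\ZZ_i(F)\,\overline{\ZZ(F)}$ while the derivative of the denominator is $\mathfrak{b}(\ZZ_i,\overline{\ZZ}) = 0$, so
\[
\left.\frac{\partial}{\partial\epsilon_i}\right|_{\epsilon=0}|V(\sigma',F)|^2 = e^{K^A(\sigma)}\,\ZZ_i(F)\,\overline{\ZZ(F)}.
\]
Since $\ZZ(F)\neq 0$, the point $\sigma$ is stationary along $L$ if and only if $\ZZ_i(F) = 0$ for every $i$, i.e. if and only if the functional $\chi(F,-)$ is $\mathfrak{b}$-orthogonal to $H^{2,1}$; here I use the A-side analogue of $\int_\gamma\Omega_{X_z} = \int_X\gamma^{PD}\wedge\Omega_{X_z}$, namely the identity $\ZZ(F) = \pm\,\mathfrak{b}(\chi(F,-),\ZZ)$ valid for every central charge $\ZZ$, which is a routine consequence of $\mathfrak{b}$ being built from the inverse of the Euler matrix.

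Finally I would pin down the Hodge type. By the first Hodge--Riemann relation, $(H^{2,1})^{\perp_{\mathfrak{b}}} = H^{3,0}\oplus H^{2,1}\oplus H^{0,3}$, so $\chi(F,-)$ lies in this subspace; since $\chi(F,-)$ is integral, hence real, its $H^{2,1}$-component must vanish (its complex conjugate would otherwise contribute an $H^{1,2}$-component, which $\chi(F,-)$ does not have). Thus $\chi(F,-)\in H^{3,0}\oplus H^{0,3} = \C\ZZ\oplus\C\overline{\ZZ}$, and being real it equals $\mathrm{Re}(C\ZZ(-))$ for a unique $C\in\C$; the converse, that any $\mathrm{Re}(C\ZZ(-))$ is $\mathfrak{b}$-orthogonal to $H^{2,1}$, is immediate, which gives the stated equivalence. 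The step I expect to demand the most care is the dictionary itself --- establishing cleanly, with the correct skew-symmetric sign conventions, that evaluation $\ZZ\mapsto\ZZ(F)$ on $\Hom(\NN(\DD_X),\C)$ is computed, up to sign, by $\mathfrak{b}(\chi(F,-),-)$, and ensuring that the ``polarized Hodge structure'' language is invoked only where the genuine Hodge--Riemann relations hold, not merely the Lagrangian property of $\pi^{-1}(L)$.
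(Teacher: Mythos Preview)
Your proposal is correct and follows essentially the same route as the paper's proof: deform the central charge along lifts of a basis of $L$, compute the holomorphic derivative of $|V|^2$ to obtain $e^{K^A(\sigma)}\overline{\ZZ(F)}\,\ZZ_i(F)$, and then use the Lagrangian/Legendrian structure together with reality of $\chi(F,-)$ to conclude. The paper's own argument is considerably terser --- it simply asserts the derivative formula as a ``straightforward calculation'' and then says ``the Legendrian property of $L$ implies that $\chi(F,-)\in\C\ZZ\oplus\C\overline{\ZZ}$'' without unpacking either step. Your version makes explicit precisely the two ingredients the paper suppresses: the vanishing $\mathfrak{b}(\ZZ_i,\overline{\ZZ})=0$ (needed so the denominator contributes nothing to the first variation), and the evaluation identity $\ZZ(F)=\pm\mathfrak{b}(\chi(F,-),\ZZ)$ (needed to translate $\ZZ_i(F)=0$ into $\mathfrak{b}$-orthogonality of $\chi(F,-)$ to $H^{2,1}$). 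Your final caveat about the sign conventions and about where the genuine Hodge--Riemann relations (as opposed to the bare Lagrangian property) are actually invoked is well placed; the paper's assertion that a Legendrian subspace ``defines a polarized Hodge structure'' is exactly what licenses your choice of the $\ZZ_i$ in $H^{2,1}$, and hence the vanishing $\mathfrak{b}(H^{2,1},H^{0,3})=0$.
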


\begin{proof}
The proof is parallel to that of Theorem \ref{attractor eq them}. 
Let $L_1,\ldots,L_{k} \in \Hom(\NN(\DD_X),\C)$ be lifts of a basis of $L$.  
By Bridgeland's result \cite{Bri1}, the deformation of the central charge 
$$
\mathcal{Z}_\epsilon = \mathcal{Z}+\sum_{i=1}^{k}\epsilon_i L_i \in \Hom(\NN(\DD_X),\C)
$$ 
for small $\epsilon=(\epsilon_i)_{i=1}^{k} \in \C^{k}$ 
induces a unique deformation $\sigma_\epsilon$ of the stability condition $\sigma$.  
Then a straightforward calculation shows 
$$
\left.\frac{\partial}{\partial \epsilon_i}\right|_{\epsilon=0} |V(\sigma_\epsilon,F)|^2=e^{K^A(\sigma)} \overline{\ZZ(F)}L_i(F). 
$$
By the assumption that $\mathcal{Z}(F)\neq 0$, $\sigma$ is a stationary point of $|V(-,E)|^2$ along the $L$-direction if and only if $L_i(F)=0$ for $1 \le i \le k$.  
Then the Legendrian property of $L$ implies that $\chi(F,-) \in \C \ZZ \oplus \C \overline{\ZZ}$. 
Since $\chi(E,-) \in \Hom(\NN(\DD_X),\Z)$, we have 
$$
\chi(F,-) = (C \ZZ + \overline{C \ZZ}) =  \mathrm{Re}(C \ZZ(-)) 
$$
for some $C \in \C$. 
\end{proof}

%

\subsection{K\"ahler attractor mechanism of complexified K\"ahler cone}
We defined the normalized K\"ahler central charges on $\Stab^+(\DD_X)/\C$. 
However, $\Stab^+(\DD_X)/\C$ is in general conjectured to be much larger than (the universal covering of) the K\"ahler moduli space $\mathfrak{M}_{\mathrm{Kah}}$ (Conjecture \ref{Bridgeland Conj}).   
Therefore it is reasonable to restrict ourselves to a submanifold of the expected dimension $k$. 
In this section, we will consider the complexified K\"ahler cone $\mathcal{K}^\C_X$, a natural candidate of such a submanifold.  

Let $\omega=B+\sqrt{-1}\kappa \in \mathcal{K}^\C_X$ be a complexified K\"ahler class of $X$. 
Henceforth we consider the quantum cohomology central charge (Equation (\ref{Central charge}))
$$
\ZZ_{\sigma_\omega}(F)= - \left \langle \widetilde{J}(\omega), v_X(F) \right\rangle. 
$$
where we write $\widetilde{J}(\omega)= (2\pi \sqrt{-1})^{-\frac{\deg}{2}}J(-2\pi \sqrt{-1} \omega)$ for the sake of shorthand. 

Let $\phi_1,\dots,\phi_k \in H^{1,1}(X)$ be a basis and $t_1,\dots,t_k$ the linear coordinate system of $H^{1,1}(X)$ dual to the basis, i.e. we may write $\omega=\sum_{i=1}^k t_i\phi_i$. 
Let $L(\omega)$ be the fundamental solution of the quantum differential equation, that is the $\mathrm{End}(H^{ev}(X,\C))$-valued function 
satisfying
$$
\nabla^A L(\omega)=0, \ \ \ L(\omega) =\mathrm{id} + O(\omega),
$$
where $\nabla^A=d+\sum_{i=1}^k(\phi_i *) dt_i$ denotes the Dubrovin connection on $H^{ev}(X,\C)$.  
Then the $J$-function is obtained by applying the fundamental solution $L(\omega)$ to $1 \in H^0(X,\Z)$, i.e. $J(\omega)=L(\omega)1$. 

\begin{Prop}
Near the large volume limit (i.e. for sufficiently small $q$), $\sqrt{-1} \mathfrak{b}(\ZZ_{\sigma_\omega},\overline{\ZZ_{\sigma_\omega}})>0$ holds. 
\end{Prop}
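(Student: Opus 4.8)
The plan is to reduce the inequality to the positivity of the classical volume $\int_X\kappa^3$ of a K\"ahler class together with an openness argument near $q=0$. First I would rewrite the bilinear form $\mathfrak{b}$ via the Mukai pairing. By the Hirzebruch--Riemann--Roch identity (\ref{HRR}) the twisted Mukai vector $v_X$ intertwines the Euler form $\chi$ on $\NN(\DD_X)$ with the Mukai pairing $\langle-,-\rangle$, and by the rank count $\rank\NN(\DD_X)=\sum_i h^{i,i}(X)=\dim_\C H^{ev}(X,\C)$ it identifies $\NN(\DD_X)\otimes\C$ with $H^{ev}(X,\C)$. A short computation with the inverse Gram matrix $(\chi^{i,j})$ shows that for a central charge of the shape $\ZZ_a=-\langle a,v_X(-)\rangle$ with $a\in H^{ev}(X,\C)$ one has $\mathfrak{b}(\ZZ_a,\ZZ_b)=\langle b,a\rangle$. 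Applying this with $a=\widetilde{J}(\omega)$ and rewriting $\overline{\ZZ_{\sigma_\omega}}$ in the same shape --- using that $\langle-,-\rangle$ is conjugation-compatible on $H^{ev}(X,\C)$ and that $\overline{v_X(F)}=v_X(F)-2\sqrt{-1}\,\rank(F)\,\Lambda_X$ differs from $v_X(F)$ only in top degree --- one gets
\[
\mathfrak{b}(\ZZ_{\sigma_\omega},\overline{\ZZ_{\sigma_\omega}})=\langle\,\overline{\widetilde{J}(\omega)}\,,\,\widetilde{J}(\omega)\,\rangle+\sqrt{-1}\,c ,
\]
for a real constant $c$ depending only on $\int_X\Lambda_X$. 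Since $\langle-,-\rangle$ is skew-symmetric in the odd-weight (Calabi--Yau $3$-fold) case, $\langle\,\overline{\widetilde{J}(\omega)}\,,\,\widetilde{J}(\omega)\,\rangle$ is purely imaginary, so $\sqrt{-1}\,\mathfrak{b}(\ZZ_{\sigma_\omega},\overline{\ZZ_{\sigma_\omega}})$ is real.

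Next I would substitute the asymptotic expansion $\widetilde{J}(\omega)=e^{-\omega}+O(q)$ attached to (\ref{Central charge}). Because $\int_X$ only detects degree-$6$ components, the $q^0$-part of $\langle\,\overline{\widetilde{J}(\omega)}\,,\,\widetilde{J}(\omega)\,\rangle$ reduces to $\langle e^{-\bar\omega},e^{-\omega}\rangle=\int_X e^{\bar\omega-\omega}=\int_X e^{-2\sqrt{-1}\kappa}$, whose $\sqrt{-1}$-multiple equals $\frac{4}{3}\int_X\kappa^3$; the remaining $O(q)$ contributions are genuine quantum corrections decaying exponentially in $\im\omega$, and $c$ is bounded. (This matches the explicit computation $K^A(\tau)=-\log(\frac{2^3\cdot 5}{3!}\im(\tau)^3)+O(q)$ for the quintic in the excerpt, where $\int_X\kappa^3=5\,\im(\tau)^3$.) Since $\kappa$ is a genuine K\"ahler class, $\int_X\kappa^3>0$, and this term grows cubically as one approaches the large volume limit; hence for $q$ small enough it dominates both the bounded constant and the exponentially small $O(q)$, and $\sqrt{-1}\,\mathfrak{b}(\ZZ_{\sigma_\omega},\overline{\ZZ_{\sigma_\omega}})>0$. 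By Definition \ref{Stab+} this is exactly the statement that $\sigma_\omega\in\Stab^+(\DD_X)$ near the large volume limit.

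The main obstacle is entirely the sign and normalization bookkeeping in the first two steps --- tracking the Mukai-dual signs, the $(2\pi\sqrt{-1})^{-\deg/2}$ normalization and the log-Gamma correction $\Lambda_X$ carefully enough to be sure the surviving leading coefficient $\frac{4}{3}$ is positive; checking the formula against the explicit quintic potential of the excerpt is the safest way to pin these down. The conceptual core --- positivity of $\int_X\kappa^3$ for a K\"ahler class together with the exponential decay of the Gromov--Witten corrections, hence openness of the condition near $q=0$ --- is elementary.
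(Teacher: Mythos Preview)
Your approach is correct and is essentially the same as the paper's: reduce to the classical ($q=0$) case and conclude by openness, using $\widetilde{J}(\omega)=e^{\omega}+O(q)$. The only difference is that the paper simply cites \cite[Proposition~4.6]{FKY} for the classical positivity, whereas you reprove that computation directly via the Mukai pairing (note the paper's convention is $\widetilde{J}(\omega)\sim e^{\omega}$, not $e^{-\omega}$, so your sign bookkeeping needs one more pass --- but as you say, the quintic cross-check pins this down).
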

\begin{proof}
This  is a quantum corrected version of \cite[Proposition 4.6]{FKY}, where $\widetilde{J}(\omega)$ is replaced by $e^\omega$. 
Since $\widetilde{J}(\omega) = e^\omega + O(q)$, the assertion follows.  
\end{proof}

\begin{Thm} \label{Legendre immersion}
The conjectural embedding (Conjecture \ref{Bridgeland Conj})
$$
\iota:  \mathcal{K}^\C_X \longrightarrow \Stab^+(\DD_X)/\C, \ \ \  \omega \mapsto \sigma_\omega=(\ZZ_{\sigma_\omega}, \PP_{\sigma_\omega})
$$
is Legendrian (if it exists).  
\end{Thm}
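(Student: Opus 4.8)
The plan is to show that the pullback $\iota^*\theta$ of the canonical contact form $\theta = \mathfrak{b}(d\ZZ,\ZZ)$ vanishes identically on $\mathcal{K}^\C_X$; since $\iota$ is an immersion of a $k$-dimensional manifold into a holomorphic contact manifold whose Legendrian subspaces are exactly the maximal isotropic subspaces of the contact distribution, vanishing of $\iota^*\theta$ is precisely the Legendrian condition. Concretely, fixing the basis $\phi_1,\dots,\phi_k$ of $H^{1,1}(X)$ with dual coordinates $t_1,\dots,t_k$, the claim reduces to proving the $k(k+1)/2$ identities
$$
\mathfrak{b}\!\left( \partial_{t_i} \ZZ_{\sigma_\omega},\, \partial_{t_j} \ZZ_{\sigma_\omega} \right) = 0 \qquad (1 \le i,j \le k),
$$
together with $\mathfrak{b}(\partial_{t_i}\ZZ_{\sigma_\omega}, \ZZ_{\sigma_\omega}) = 0$; the latter is the differential consequence of $\mathfrak{b}(\ZZ_{\sigma_\omega},\ZZ_{\sigma_\omega}) \equiv 0$ (the first defining equation of $\Stab^+$), so once the pairing of the $\ZZ$ with itself is shown to vanish on the image, its derivatives give what is needed for the $\C$-direction automatically, and the real content is the isotropy of the span of the first derivatives.

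First I would reinterpret $\mathfrak{b}$ on the image in terms of the Mukai pairing. By the Hirzebruch--Riemann--Roch compatibility (\ref{HRR}), $\chi(F_i,F_j) = \langle v_X(F_i), v_X(F_j)\rangle$, so if $\{F_i\}$ is a basis of $\NN(\DD_X)$ then the inverse Euler matrix $(\chi^{i,j})$ is the Gram matrix of the dual basis with respect to $\langle-,-\rangle$; feeding $\ZZ_{\sigma_\omega}(F) = -\langle \widetilde{J}(\omega), v_X(F)\rangle$ through the definition of $\mathfrak{b}$ collapses the double sum and yields
$$
\mathfrak{b}(\ZZ_{\sigma_\omega}, \ZZ_{\sigma_\omega}) = \langle \widetilde{J}(\omega), \widetilde{J}(\omega)\rangle,
\qquad
\mathfrak{b}(\partial_{t_i}\ZZ_{\sigma_\omega}, \partial_{t_j}\ZZ_{\sigma_\omega}) = \langle \partial_{t_i}\widetilde{J}(\omega), \partial_{t_j}\widetilde{J}(\omega)\rangle.
$$
So the whole statement becomes: the image of the map $\omega \mapsto [\widetilde{J}(\omega)]$ into $\mathbb{P}(H^{ev}(X,\C))$ is a Legendrian submanifold for the contact structure induced by the Mukai pairing — which is nothing but the classical statement that the (twisted) $J$-function, being a fundamental solution of the quantum differential equation applied to $1$, sweeps out the Lagrangian cone of Givental, whose projectivization is Legendrian. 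I would make this precise using $\widetilde{J}(\omega) = L(\omega)\,1$ together with the two structural facts about the Dubrovin connection: $\nabla^A$ is flat, and it is graded-isometric for the Mukai pairing in the sense that $d\langle a,b\rangle = \langle \nabla^A a, b\rangle + \langle a, \nabla^A b\rangle$ (the Frobenius property $\langle \phi_i * a, b\rangle = \langle a, \phi_i * b\rangle$). Differentiating $\widetilde{J} = L\,1$ gives $\partial_{t_i}\widetilde{J} = L(\phi_i * 1) + (\text{grading shift})$, and since $\phi_i * 1 = \phi_i \in H^2$ (string equation / unit), the derivatives $\partial_{t_i}\widetilde{J}$ span, inside the $L$-transform, the degree-$2$ part; isotropy then follows because $H^2$ and its image under any single $\phi_j *$ pair trivially under the Mukai pairing for dimension reasons (the Mukai pairing pairs $H^{2a}$ with $H^{2(3-a)}$ on a threefold, and two degree-$2$-ish classes land in $H^{\le 4}$), combined with the isometry of $L$.

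The main obstacle is bookkeeping the degree operator $(2\pi\sqrt{-1})^{-\deg/2}$ and the Mukai dual $v^\vee$ correctly so that the formal manipulations above are legitimate: one must check that conjugating the quantum differential equation by the degree-rescaling $(2\pi\sqrt{-1})^{-\deg/2}$ produces a connection that is still flat and still Mukai-isometric (it is, because the rescaling is a constant-coefficient grading operator and the Mukai pairing is homogeneous of a fixed total degree), and that the twist $e^{\sqrt{-1}\Lambda_X}$ in $v_X$ — being multiplication by an even class — does not disturb isotropy. A cleaner route, which I would adopt if the direct computation gets unwieldy, is to invoke the known identification (Iritani \cite{Iri}, cf. the $\mathfrak{per}^\vee$ of Conjecture \ref{Bridgeland Conj}) of $\widetilde{J}(\omega)$ with the Givental $\mathcal{I}$-function up to the mirror map, under which the Lagrangian-cone property is a theorem; then Legendrianity of $\iota$ is immediate and only the compatibility of $\mathfrak{b}$ with the Mukai pairing (which is Proposition \ref{WPformula}'s philosophy, made precise by (\ref{HRR})) needs to be spelled out. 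I expect the write-up to be short modulo these normalization checks.
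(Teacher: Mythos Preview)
Your approach is essentially the same as the paper's: reduce to showing $\mathfrak{b}(\partial_{t_i}\ZZ_{\sigma_\omega},\partial_{t_j}\ZZ_{\sigma_\omega})=0$, translate $\mathfrak{b}$ to the Mukai pairing of derivatives of $\widetilde{J}$ via the Hirzebruch--Riemann--Roch compatibility (this is \cite[Lemma~4.4]{FKY} in the paper), use $\partial_{t_i}J=L\phi_i$ and the Mukai-isometry of the fundamental solution $L$ (the Frobenius property, quoted from \cite[Proposition~4.2]{Iri}), and conclude $\langle\phi_i,\phi_j\rangle=0$ by degree. The only cosmetic difference is that the paper handles the degree operator $(2\pi\sqrt{-1})^{-\deg/2}$ by pulling it out as the scalar $(-2\pi\sqrt{-1})^{-3}$ in one clean line rather than tracking a ``grading shift'', so your bookkeeping worry dissolves once you note the Mukai pairing is homogeneous of total degree $6$.
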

\begin{proof}
It suffices to check that 
$$
\C\langle \frac{\partial}{\partial t_1}\ZZ_{\sigma_{\omega}} ,\dots, \frac{\partial}{\partial t_k}\ZZ_{\sigma_{\omega}}  \rangle 
\subset \Hom(\NN(\DD_X),\C)/\C\ZZ_{\sigma_{\omega}}
$$
is a Legendrian subspace. 
This follows from the following calculation. 
\begin{align*}
\mathfrak{b}(\frac{\partial}{\partial t_i}\ZZ_{\sigma_{\omega}}, \frac{\partial}{\partial t_j}\ZZ_{\sigma_{\omega}}) 
 & =
 -\langle \frac{\partial}{\partial t_i} \widetilde{J}(\omega), \frac{\partial}{\partial t_j}\widetilde{J}(\omega) \rangle  \\
  & = 
   \frac{-1}{(-2\pi \sqrt{-1})^{3}} \langle \frac{\partial}{\partial t_i} J(-2\pi \sqrt{-1}\omega), \frac{\partial}{\partial t_j} J(-2\pi \sqrt{-1}\omega) \rangle  \\
 &=
   \frac{1}{2\pi \sqrt{-1}}  \langle L(-2\pi \sqrt{-1}\omega) \phi_i, L(-2\pi \sqrt{-1}\omega)\phi_j \rangle  \\
       &=
   \frac{1}{2\pi \sqrt{-1}}  \langle  \phi_i, \phi_j \rangle  \\
  & = 0
\end{align*}
The first equality is due to \cite[Lemma 4.4]{FKY}. 
Although \cite[Lemma 4.4]{FKY} is classical (no quantum correction), an identical proof works. 
The third equality follows from the definition of the $J$-function. 
The fourth equality follows from \cite[Proposition 4.2]{Iri} (essentially the Frobenius property of the quantum product).  
\end{proof}

Motivated by the Theorem \ref{Legendre immersion}, we define the normalized K\"ahler central charge of $F \in \NN(\DD_X)$ on $\mathcal{K}^\C_X$ by 
$$
W(\omega,F)= e^{\frac{K^A(\omega)}{2}} \ZZ_{\sigma_\omega}(F), 
$$
where $K^A$ denotes the A-model Weil--Petersson potential.   
Then $W(-,F)$ is a smooth function 
$$
W(-,F): \mathcal{K}^\C_X \longrightarrow \C
$$
and the K\"ahler mass function of $F \in \NN(\DD_X)$ is defined by
$$
|W(-,F)|: \mathcal{K}^\C_X \longrightarrow \R_{\ge 0}. 
$$ 
To summarize, we have the following commutative diagram
$$
\xymatrix{
\mathcal{K}^\C_X  \ar[rd]_{|W(-,F)|}   \ar@{^{(}->}[r]^-{\iota} & \Stab^+(\DD_X)/\C \ar[d]^{|V(-,F)|} \\
 &  \R_{\ge 0}
}
$$
where $\iota$ is in general hypothetical, but the K\"ahler mass functions are well-defined. 

\begin{Thm}
For $F \in \NN(\DD_X)$, a stationary point $\omega \in \mathcal{K}^\C_X$ of the K\"ahler mass function such that $W(\omega,F) \ne 0$ is characterized by the equation
\begin{equation}
\chi(F,-) = \mathrm{Re}(C \ZZ_{\sigma_\omega}(-)), \ \ \ (\exists C \in \C)\label{Attractor equation 2}
\end{equation}
in $\Hom(\NN(\DD_X),\C)$. 
We call Equation (\ref{Attractor equation 2}) the K\"ahler attractor equation. 
\end{Thm}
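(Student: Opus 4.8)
The plan is to reduce the statement to Theorem~\ref{Kahler attractor on stab} by using Theorem~\ref{Legendre immersion}. Concretely, $\iota$ is a Legendrian immersion, so its image is a Legendrian submanifold of $\Stab^+(\DD_X)/\C$, and the differential $d\iota_\omega$ identifies the tangent space $T_\omega\mathcal{K}^\C_X$ with a Legendrian subspace $L(\omega)\subset \Hom(\NN(\DD_X),\C)/\C\ZZ_{\sigma_\omega}$. Since the K\"ahler mass function $|W(-,F)|$ on $\mathcal{K}^\C_X$ is the pullback along $\iota$ of $|V(-,F)|$ on $\Stab^+(\DD_X)/\C$ (the commutative diagram above), a point $\omega$ is a stationary point of $|W(-,F)|$ if and only if $\sigma_\omega=\iota(\omega)$ is a stationary point of $|V(-,F)|$ restricted to the image of $\iota$, equivalently a stationary point of $|V(-,F)|$ along the Legendrian direction $L(\omega)=\mathrm{im}(d\iota_\omega)$. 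Theorem~\ref{Kahler attractor on stab} then immediately gives the characterization $\chi(F,-)=\mathrm{Re}(C\ZZ_{\sigma_\omega}(-))$ for some $C\in\C$.

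Alternatively, and perhaps cleanest for a self-contained argument, I would simply repeat the computation of Theorem~\ref{Kahler attractor on stab} directly in the coordinates $t_1,\dots,t_k$ on $\mathcal{K}^\C_X$. Using $\ZZ_{\sigma_\omega}(F)=-\langle \widetilde{J}(\omega), v_X(F)\rangle$ and the fact that $W(\omega,F)=e^{K^A(\omega)/2}\ZZ_{\sigma_\omega}(F)$ with $e^{-K^A(\omega)}=\sqrt{-1}\,\mathfrak{b}(\ZZ_{\sigma_\omega},\overline{\ZZ_{\sigma_\omega}})$ (positive near the large volume limit by the preceding Proposition), one computes
\begin{align*}
\left.\frac{\partial}{\partial t_i}\right|_\omega |W(\omega,F)|^2
&= \frac{\partial}{\partial t_i}\frac{|\ZZ_{\sigma_\omega}(F)|^2}{\sqrt{-1}\,\mathfrak{b}(\ZZ_{\sigma_\omega},\overline{\ZZ_{\sigma_\omega}})}
= e^{K^A(\omega)}\,\overline{\ZZ_{\sigma_\omega}(F)}\,\bigl(\tfrac{\partial}{\partial t_i}\ZZ_{\sigma_\omega}\bigr)(F),
\end{align*}
where the cross terms cancel exactly because $\mathfrak{b}\bigl(\tfrac{\partial}{\partial t_i}\ZZ_{\sigma_\omega},\ZZ_{\sigma_\omega}\bigr)=0$ along the image of $\iota$ (this is the Legendrian condition, which is the content of Theorem~\ref{Legendre immersion}, together with $\mathfrak{b}(\ZZ_{\sigma_\omega},\ZZ_{\sigma_\omega})=0$ since $\mathfrak{b}$ is skew-symmetric for $n=3$). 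Given $\ZZ_{\sigma_\omega}(F)\neq 0$, vanishing of all these derivatives is equivalent to $\bigl(\tfrac{\partial}{\partial t_i}\ZZ_{\sigma_\omega}\bigr)(F)=0$ for $1\le i\le k$, i.e. $\chi(F,-)$ annihilates the Legendrian subspace $L(\omega)$. The Legendrian property then forces $\chi(F,-)\in\C\ZZ_{\sigma_\omega}\oplus\C\overline{\ZZ_{\sigma_\omega}}$, and since $\chi(F,-)$ is real ($\chi(F,-)\in\Hom(\NN(\DD_X),\Z)$) we conclude $\chi(F,-)=\mathrm{Re}(C\ZZ_{\sigma_\omega}(-))$ for some $C\in\C$, and conversely.

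The key inputs are: Theorem~\ref{Legendre immersion}, which supplies the vanishing $\mathfrak{b}(\partial_{t_i}\ZZ_{\sigma_\omega},\partial_{t_j}\ZZ_{\sigma_\omega})=0$ and hence that the span of the $\partial_{t_i}\ZZ_{\sigma_\omega}$ (mod $\C\ZZ_{\sigma_\omega}$) is genuinely Legendrian; the Proposition guaranteeing $\sqrt{-1}\,\mathfrak{b}(\ZZ_{\sigma_\omega},\overline{\ZZ_{\sigma_\omega}})>0$ near the large volume limit so that $K^A(\omega)$ and hence $W(\omega,F)$ are well-defined; and the Legendrian-Hodge-structure dictionary recalled before Theorem~\ref{Kahler attractor on stab}, which translates "$\chi(F,-)$ kills $L(\omega)$'' into "$\chi(F,-)\in H^{3,0}\oplus H^{0,3}$''. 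I expect the main obstacle to be purely bookkeeping: carefully justifying the cancellation of the cross terms $\overline{\ZZ_{\sigma_\omega}(F)}\cdot(\text{terms from differentiating the denominator})$, which relies on $\mathfrak{b}(\partial_{t_i}\ZZ_{\sigma_\omega},\ZZ_{\sigma_\omega})=0$ holding pointwise on the image of $\iota$ (not just generically), and on keeping track of the $(2\pi\sqrt{-1})$-factors coming from $\widetilde{J}(\omega)=(2\pi\sqrt{-1})^{-\deg/2}J(-2\pi\sqrt{-1}\omega)$. Since this differentiation is "the same calculation'' as in Theorems~\ref{attractor eq them} and~\ref{Kahler attractor on stab}, I would present it tersely, citing those proofs, rather than redoing it in full.
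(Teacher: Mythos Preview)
Your proposal is correct and takes essentially the same approach as the paper: the paper's proof is a two-line reduction to Theorem~\ref{Kahler attractor on stab} via Theorem~\ref{Legendre immersion}, exactly as in your first paragraph. The paper additionally remarks that only $\ZZ_{\sigma_\omega}$ (not $\PP_{\sigma_\omega}$) is used in Theorem~\ref{Legendre immersion}, so the argument does not depend on the conjectural existence of $\iota$ as an embedding into $\Stab^+(\DD_X)/\C$---a point worth making explicit, since your presentation leans on $\iota$ being an actual immersion.
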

\begin{proof}
The assertion follows from Theorem \ref{Kahler attractor on stab} and  Theorem \ref{Legendre immersion}. 
Note that we do not used $\PP_{\sigma_\omega}$ but only $\ZZ_{\sigma_\omega}$ in Theorem \ref{Legendre immersion}.  
\end{proof}

\begin{Def}
A stationary point $\omega \in \mathcal{K}^\C_X$ with $W(\omega,F) \ne 0$ is called a K\"ahler attractor for $F$. 
The corresponding Calabi--Yau 3-fold $(X,\omega)$ is called a K\"ahler attractor variety for $F$.  
\end{Def}

\begin{Rem}
Near the large volume limit, we have an asymptotic expansion $K^A(\omega) = -\log \im (\omega)^3 + O(q)$ (up to constant term). 
Then the A-model Weil--Petersson metric $g^A$ is positive definite and we can show that the K\"ahler attractors are discrete by an almost identical argument to the complex side (Theorem \ref{minimizer}).  
Note that this sort of asymptotic metric has previously been investigated by Trenner and Wilson \cite{TW}. 
Our work \cite{FKY} can be considered as a globalization of their pioneering work. 
\end{Rem}


\subsection{K\"ahler attractor mechanism for torus} \label{Kahler attractor for T6}

Let us consider a complex 3-torus $Y=\C^3/(\Z^3+\sqrt{-1}\Z^3)$. 
We choose a symplectic basis of $H^{ev}(Y,\Z)=\oplus_{i=0}^3H^{i,i}(Y,\Z)$ as follows
\begin{align}
\delta_0 &= 1 \ \in H^{0,0}(Y,\Z), \notag \\ 
\delta_{ij} &= \frac{\sqrt{-1}}{2}dz_i \wedge d\bar{z_j} \ \in H^{1,1}(Y,\Z)\ \ \ (1 \le i,j \le 3) \notag \\
\epsilon^0 &= (\frac{\sqrt{-1}}{2})^3 dz_1 \wedge d\bar{z_1} \wedge dz_2 \wedge d\bar{z_2} \wedge dz_3 \wedge d\bar{z_3} \in H^{3,3}(Y,\Z) \notag   \\
\epsilon^{ij} &= (\frac{\sqrt{-1}}{2})^{-1}\frac{\partial}{\partial z_i}\lrcorner(\frac{\partial}{\partial \bar{z_j}}\lrcorner \epsilon^0) \in H^{2,2}(Y,\Z) \ \ \ (1 \le i,j \le 3).  \notag 
\end{align}
We introduce a complexified K\"ahler structure on $Y$ by
$$
\omega = B+ \sqrt{-1} \kappa = \sum_{i,j=1}^3 \omega^{ij} \delta_{ij} \in H^{1,1}(Y,\C). 
$$
and identify $\omega$ with the matrix $\Omega=(\omega^{ij}) \in \mathfrak{H}_3$ for the sake of convenience.

The twisted Mukai vector of $F \in \NN(\DD_Y)$ has an expansion
$$
v_Y(F)= v^0 \delta_0 + \sum_{i,j}^3 v^{ij}\delta_{ij} +  \sum_{i,j}^3 u_{ij}\epsilon^{ij} + u_0 \epsilon^0. 
$$
There is no quantum correction ($\ZZ_{\sigma_\omega}(F)=-\langle e^\omega, F \rangle$), 
and hence the K\"ahler attractor equation $\chi(F,-) = \mathrm{Re}(C \ZZ_{\sigma_\omega}(-))$ is equivalent to the following system of equations 
\begin{align}
\mathrm{Re}(C) &= -v^0 \notag \\
\mathrm{Re}(C \omega^{ij}) &=- v^{ij}  \notag \\
\mathrm{Re}(C \Cof(\Omega)_{ij}) &= u_{ij}  \notag \\
\mathrm{Re}(C \det(\Omega)) &= -u_0. \notag
\end{align}

They are parallel to the complex attractor equation for $T^6$. 
We are able to solve the equations to obtain the solutions in an explicit form. 

\begin{Thm}\label{thm:KahlerAbelThree}
Assume that the coefficient matrices $V=(v^{ij}), U=(u_{ij}) \in M_3(\Z)$ are symmetric. 
There exists a unique K\"ahler attractor 
$$
\Omega=((2VU-(v^0u_0+\tr(VU)E_3))+\sqrt{-D}E_3)(2R)^{-1} \in \mathfrak{H}_3
$$
where 
\begin{align}
R&=\Cof(V)+v^0U, \notag \\
D&=((\tr(VU))^2-\tr((VU)^2))-(v^0u_0+\tr(VU))^2+4(v^0 \det(U) -u_0 \det(V)). \notag 
\end{align}
\end{Thm}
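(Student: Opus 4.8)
The plan is to exploit the complete parallelism between the Kähler attractor equation for the complex $3$-torus $Y$ and the complex attractor equation for $T^6$ treated in Theorem~\ref{Thm:Moore}, so that the solution is obtained by a formal substitution of the data $(v^0, v^{ij}, u_{ij}, u_0)$ for $(p^0, P^{ij}, -Q_{ij}, q_0)$ up to the overall signs appearing in the two systems. Concretely, I would first record the displayed system of four real equations for $C \in \C$ and $\Omega = (\omega^{ij}) \in \mathfrak{H}_3$, and observe that it has exactly the same shape as the system $\mathrm{Re}(C) = p^0$, $\mathrm{Re}(C T^{ij}) = P^{ij}$, $\mathrm{Re}(C\,\Cof(T)_{ij}) = -Q_{ij}$, $\mathrm{Re}(C\det(T)) = q_0$ from Section~\ref{complex attractor mechanism for torus}, after the dictionary $p^0 \leftrightarrow -v^0$, $P^{ij} \leftrightarrow -v^{ij}$, $Q_{ij} \leftrightarrow -u_{ij}$, $q_0 \leftrightarrow -u_0$. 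Since the proof of Theorem~\ref{Thm:Moore} given in Appendix~A manipulates these equations purely algebraically, the same manipulations apply verbatim here.

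Second, I would carry out the substitution and simplify. The key point is that the formulae for $R$ and $D$ in Theorem~\ref{Thm:Moore} are each \emph{homogeneous of degree $2$} in the quadruple $(p^0, P, Q, q_0)$ when one counts entries of $P$ and $Q$ with weight one and $p^0, q_0$ with weight one; hence replacing every datum by its negative leaves $R$, $D$, and the numerator $2PQ - (p^0 q_0 + \tr(PQ))E_3$ unchanged. Writing the Moore solution for the substituted data therefore gives precisely
\begin{align*}
R &= \Cof(V) + v^0 U, \\
D &= \bigl((\tr(VU))^2 - \tr((VU)^2)\bigr) - (v^0 u_0 + \tr(VU))^2 + 4\bigl(v^0\det(U) - u_0\det(V)\bigr), \\
\Omega &= \bigl((2VU - (v^0 u_0 + \tr(VU))E_3) + \sqrt{-D}\,E_3\bigr)(2R)^{-1},
\end{align*}
which is the claimed answer. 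Existence and uniqueness, as well as the fact that $\Omega$ lies in $\mathfrak{H}_3$ (symmetry and positive-definite imaginary part), then follow from the corresponding assertions in Theorem~\ref{Thm:Moore} under the hypothesis that $V$ and $U$ are symmetric — the symmetry hypothesis on $P$, $Q$ is preserved under negation.

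The main obstacle is bookkeeping the signs correctly. The two systems are \emph{not} literally identical: the complex side has $\mathrm{Re}(C\,\Cof(T)_{ij}) = -Q_{ij}$ and $\mathrm{Re}(C\det(T)) = q_0$ (with a plus on the $q_0$ equation), whereas the Kähler side has $\mathrm{Re}(C\,\Cof(\Omega)_{ij}) = u_{ij}$ and $\mathrm{Re}(C\det(\Omega)) = -u_0$; and $\ZZ_{\sigma_\omega}(F) = -\langle e^\omega, F\rangle$ carries an extra global minus relative to $\int_\gamma \Omega_{X_T}$. I would track these through carefully, verifying that the net effect is exactly the uniform sign flip $(p^0,P,Q,q_0)\mapsto(-v^0,-V,-U,-u_0)$ described above, so that the degree-two homogeneity kills all signs in the final formulae. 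Once this is pinned down, the proof is a one-line appeal to Theorem~\ref{Thm:Moore}; alternatively, if one prefers a self-contained argument, one simply repeats the Appendix~A computation with the new symbols, which is routine. I would present it via the dictionary-and-homogeneity route, remarking that the detailed verification is identical to Appendix~A, and, if desired, defer the explicit check to an appendix.
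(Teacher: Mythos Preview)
Your approach is correct and is exactly what the paper does: both reduce the Kähler attractor equations to the system solved in Appendix~A (Theorem~\ref{Thm:AttractorSol}) via the substitution $(p^0,P,Q,q_0)=(-v^0,-V,-U,-u_0)$, and then read off the formula for $\Omega$. One minor slip: $D$ is homogeneous of degree $4$ (not $2$) in the data, since each term such as $(\tr(PQ))^2$ or $p^0\det(Q)$ has total degree $4$; but since $(-1)^4=1$ your conclusion that the uniform sign flip leaves $R$, $D$, and the numerator $2PQ-(p^0q_0+\tr(PQ))E_3$ unchanged remains valid.
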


\begin{proof}
The proof is based on a step-by-step explicit calculation given in Appendix A and is parallel to the complex attractor case. 
\end{proof}

In light of the B-model side, we introduce a covering of the K\"ahler attractor variety $(Y,\omega)$.  
The lattice embedding 
$$
\Z^3+\sqrt{-1}(2R)\Z^3 \hookrightarrow \Z^3 + \sqrt{-1} \Z^3
$$
induces a covering map 
$$
\phi^\vee: Y'= \C^3/(\Z^3+\sqrt{-1}(2R)\Z^3) \longrightarrow Y=\C^3(\Z^3 + \sqrt{-1} \Z^3). 
$$
Then $Y'$ carries a natural complexified K\"ahler structure $\omega'$ given by the pullback 
$$
\Omega'=(\phi^\vee)^*\Omega=(2VU-(v^0u_0+\tr(VU)E_3))+\frac{\sqrt{-D}}{2}E_3
$$
If we regard the complexified K\"ahler structures as elements of $H^{2}(Y',\C)/H^{2}(Y',\Z)$, 
then the B-field $\mathrm{Re}(\omega')$ becomes trivial and the K\"ahler structure reads
$$
\mathrm{Im}(\omega')=\frac{\sqrt{D}}{2}\sum_{1\le i,j\le 3} \delta_{ij} dz_i \wedge d\bar{z_j} = \sqrt{D} \sum_{1\le i \le 3} dx_i \wedge dy_i
$$
Hence $Y'$ is a principally polarized abelian 3-fold with the K\"ahler structure $\sqrt{D}$. 
This computation is compatible with the fact that the product $X'=(E_{\sqrt{-D}})^3$  of an elliptic curve $E_{\sqrt{-D}}$ 
is mirror symmetric to a principally polarized abelian 3-fold $Y'$ (c.f. \cite{KL}).  

We conclude that a complex attractor variety $X$ (resp. $X'$) is mirror symmetric to a K\"ahler attractor variety $Y$ (resp. $Y'$) 
provided that $\gamma \in H_3(X,\Z)$ and $F \in \NN(\DD_Y)$ are mirror cycles.  
 $$
\xymatrix{
(E_{\sqrt{-D}})^3 \  \ar[d]_\phi \ar@{<->}[rr]^-{\text{mirror}} & & \  (\C^3/(\Z^3+\sqrt{-1}(2R)\Z^3),\omega')  \ar[d]^{\phi^\vee} \\
 \C/(\Z^3+T \Z^3)                    & &  (\C^3/(\Z^3+\sqrt{-1}\Z^3),\omega) 
}
$$

\begin{Rem}
It is expected that if $\Omega$ is a complex attractor of a charge $\gamma$, then $\gamma$ supports BPS states with respect to $\Omega$ \cite[Section~2.6]{Moo}.
In terms of K\"ahler attractors and stability conditions on the mirror side, one expects that if $\sigma$ is a K\"ahler attractor of a class $v$, then $v$ should support a Bridgeland semistable object with repsect to $\sigma$.
The space of stability conditions on abelian threefolds has been studied in \cite{BMS}.
It would be interesting to verify that the K\"ahler attractors found in Theorem~\ref{thm:KahlerAbelThree} do indeed support semistable objects.
\end{Rem}


\subsection{K\"ahler attractor mechanism for $E \times S$} \label{Kahler attractor for E x S}
For an elliptic curve $E=\C/(\Z+\tau \Z)$ and a K3 surface $S$, we consider the product Calabi--Yau 3-fold $Y=S \times E$. 
In the following, we use $\left<-,-\right>$ for the Mukai pairing on $H^*(Y,\Z)$, $\left<-,-\right>_S$ for the Mukai pairing on $H^*(S,\Z)$, 
and $(-,-)$ for the cup pairing on $H^2(S,\Z)$. 
We also define the algebraic lattice by 
$$
NS'(S)=H^{0}(S,\Z) \oplus NS(S) \oplus H^{4}(S,\Z). 
$$

The twisted Mukai vector $v_Y(F)$ of $F \in \NN(\DD_Y)$ can be written as
\[
v_Y(F)=v_1+v_2 \frac{\sqrt{-1}}{2}dz\wedge d\bar z\in H^{ev}(Y,\Z)
\]
where for $i=1,2$ 
$$
v_i=(r_i,D_i,s_i)\in NS'(S). 
$$ 
We would like to find $\omega_S\in NS(S)_\C$ and $\omega_E\in\HH$ such that 
$$
\omega=\omega_S+\omega_E  \frac{\sqrt{-1}}{2} dz\wedge d\bar z\in H^{1,1}(Y)
$$
satisfies the K\"ahler attractor equation
\[
\re(CZ_\omega(-))=\left<v_Y(F),-\right> 
\]
for some $C\in\C$, where we rewrite the equation by the Hirzebruch--Riemann--Roch theorem (Equation (\ref{HRR})). 

\begin{Lem}
The K\"ahler attractor equation is equivalent to the following system of equations
\begin{align}
\re(C\left<\delta,-\right>_S) &=\left<v_1, -\right>_S,  \notag \\
\re(C\omega_E\left<\delta,-\right>_S) & =\left<v_2,-\right>_S, \notag 
\end{align}
where $\delta = e^{\omega_S}$.  
\end{Lem}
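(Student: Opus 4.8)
The plan is to reduce the K\"ahler attractor equation on $Y=S\times E$ to the K3 factor by combining the product geometry with the absence of quantum corrections. First I would record that for $Y=S\times E$ the quantum cohomology central charge is purely classical. Indeed $\Td_Y=\Td_S\boxtimes\Td_E=\Td_S\boxtimes 1$ and $c_3(Y)=0$ (as $S$ is a surface), so $\Lambda_Y=0$ and $v_Y(F)=\ch(F)\sqrt{\Td_S}$; moreover the small $J$-function of a product is the external product of the $J$-functions, and $J_S(\omega_S)=e^{\omega_S}$ (all genus-zero Gromov--Witten invariants of a K3 surface vanish) while $J_E(\omega_E\eta)=e^{\omega_E\eta}$ (a genus-one curve carries no nonconstant rational curve), so $\widetilde J(\omega)=J_Y(\omega)=e^{\omega_S}\boxtimes e^{\omega_E\eta}=e^{\omega}$ and hence $Z_\omega(F)=-\langle e^{\omega},v_Y(F)\rangle$.

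Next comes the K\"unneth bookkeeping. Writing $\eta=\frac{\sqrt{-1}}{2}dz\wedge d\bar z$ and using $\eta^2=0$ gives $e^{\omega}=e^{\omega_S}(1+\omega_E\eta)=\delta+\omega_E\delta\,\eta$ with $\delta=e^{\omega_S}\in NS'(S)_\C$. Since $H^{\mathrm{odd}}(S)=0$, the K\"unneth decomposition identifies $H^{ev}(Y,\C)$ with $H^{ev}(S,\C)\otimes(\C\oplus\C\eta)$, and a direct computation from the definition of the Mukai pairing, tracking the Mukai dual on each tensor factor, shows that for $v=a+b\,\eta$ and $w=a'+b'\,\eta$ one has $\langle v,w\rangle_Y=\langle a,b'\rangle_S-\langle b,a'\rangle_S$ (up to a nonzero real normalization constant that is irrelevant below); in particular this form is skew, as it must be on a Calabi--Yau threefold, and it couples the "$1$-component" of one argument with the "$\eta$-component" of the other. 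Substituting $e^{\omega}=\delta+\omega_E\delta\,\eta$ and $v_Y(F)=v_1+v_2\,\eta$ and evaluating on a test class $w=a'+b'\,\eta$ gives
$$
Z_\omega(w)=-\langle\delta,b'\rangle_S+\omega_E\langle\delta,a'\rangle_S,\qquad \langle v_Y(F),w\rangle_Y=\langle v_1,b'\rangle_S-\langle v_2,a'\rangle_S .
$$

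Finally I would split the equation. The K\"ahler attractor equation is an identity of $\R$-linear functionals of $w\in\NN(\DD_Y)$, and since $\NN(\DD_Y)\otimes\Q\cong NS'(S)_\Q\otimes H^{ev}(E)_\Q$ admits a $\Q$-basis consisting of classes $u+0\cdot\eta$ and $0+u\,\eta$ with $u$ running over a basis of $NS'(S)$ --- for instance $v_Y(F_S\boxtimes\mathcal O_E)$ and $v_Y(F_S\boxtimes\mathcal O_p)$ --- the single equation is equivalent to its restrictions to the slices $b'=0$ and $a'=0$. Reading off the two displayed formulas on these slices produces two equations that, after matching the sign conventions for $\langle-,-\rangle_S$ fixed in the statement, are precisely the asserted system; conversely the two equations reassemble into the original by $\R$-linearity. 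The only point demanding genuine care is this sign-and-normalization bookkeeping in the K\"unneth splitting of the Mukai pairing and its reconciliation with the conventions in the statement; the vanishing of the quantum corrections, though it relies on the product structure, is standard.
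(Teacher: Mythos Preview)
Your proposal is correct and follows essentially the same route as the paper: both arguments note that $\widetilde J(\omega)=e^\omega$ (no quantum corrections on $S\times E$), expand $e^\omega=\delta+\omega_E\delta\,\eta$ via the K\"unneth splitting, and then obtain the two equations by testing against classes $\alpha\in H^{ev}(S)$ and $\beta\,\eta$ separately. The paper simply computes $\langle v_Y(F),\alpha\rangle$ and $\langle e^\omega,\alpha\rangle$ (and likewise with $\beta\,\eta$) directly, whereas you first write out the general K\"unneth formula $\langle a+b\eta,\,a'+b'\eta\rangle_Y=\langle a,b'\rangle_S-\langle b,a'\rangle_S$ and then specialize; this is a difference in presentation, not in substance, and your honest flag about the sign/normalization bookkeeping matches the paper's own implicit absorption of the constant $\int_E\eta$.
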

\begin{proof}
Recall first that there is no quantum correction for $Y$, and hence we have $\widetilde{J}(\omega)=e^\omega$. 
We plug $\alpha\in\oplus_{i=0}^2 H^{i,i}(S)$ in the K\"ahler attractor equation to obtain 
$$
\left<v_2,\alpha\right>_S=\left< v_Y(F),\alpha \right>=\re(C\left<e^\omega,\alpha\right>)=\re(C\omega_E\left<\delta,\alpha\right>).
$$
Similarly for $\beta dz\wedge d\bar z$ where $\beta\in\oplus_{i=0}^2 H^{i,i}(S)$ we obtain obtain 
$$
\left<v_1,\beta \right>_S=\left<v_Y(F),\beta \right>=\re(C\left<e^\omega,\beta \right>)=\re(C\left<\delta,\beta \right>).
$$
\end{proof}

Let us write $D_{v_1,v_2}=v_1^2v_2^2-\langle v_1,v_2\rangle^2_S \in \Z$, where $v_i^2=\langle v_i,v_i\rangle_S$.

\begin{Prop} \label{Kahler attractor 1}
There exist $\omega_S \in H^{1,1}(S)$ and $\omega_E \in \HH$ satisfying the K\"ahler attractor equation for $F$ 
if and only if the lattice $\Z v_1+ \Z v_2$ is positive definite.  
Moreover, they are unique and given by
$$
\delta=\frac{- \sqrt{-1}}{C \ \im(\omega_E)}(v_2-\overline{\omega}_Ev_1), \ \ \ 
\omega_E=\frac{\left<v_1,v_2\right>_S + \sqrt{-D_{v_1,v_2}}}{v_1^2}. 
$$
where the constant $C$ is chosen so that the degree 0 part of $\delta$ is $1 \in H^0(S,\C)$.   
\end{Prop}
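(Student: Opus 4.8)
The plan is to run, on the mirror side, the argument used for Theorem~\ref{K3 complex attractor}: the holomorphic volume form $\Omega_S$ is replaced by $\delta=e^{\omega_S}$, the Hodge--Riemann relation $\Omega_S\wedge\Omega_S=0$ is replaced by the identity $\langle\delta,\delta\rangle_S=0$ (automatic for $\delta=e^{\omega_S}$ with $\omega_S\in H^{1,1}(S)$, since then $\langle e^{\omega_S},e^{\omega_S}\rangle_S=(\omega_S,\omega_S)-(\omega_S,\omega_S)=0$), and the transcendental lattice is replaced by $\Z v_1+\Z v_2\subset NS'(S)$. Throughout one works with the reduced system of the Lemma,
\begin{equation*}
\re(C\langle\delta,-\rangle_S)=\langle v_1,-\rangle_S,\qquad \re(C\omega_E\langle\delta,-\rangle_S)=\langle v_2,-\rangle_S,\qquad \delta=e^{\omega_S}.
\end{equation*}

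First I would solve the linear part. Viewing the first two equations as identities of real-linear functionals on $NS'(S)_\R$ and using that the Mukai pairing is non-degenerate on $NS'(S)$, they become $v_1=\re(C\delta)$ and $v_2=\re(C\omega_E\delta)$ in $NS'(S)_\R$; since $\im\omega_E\neq 0$, this $2\times 2$ real system has the unique solution $C\delta=\frac{-\sqrt{-1}}{\im\omega_E}(v_2-\overline{\omega}_E v_1)$, which is the first displayed formula of the statement after dividing by $C$. (This is the specialization of the linear-algebra step in the proof of Proposition~\ref{attractor rank 2}, with $C_1=C$, $C_2=C\omega_E$ and $(\gamma_1^{PD},\gamma_2^{PD},\Omega)=(v_1,v_2,\delta)$.) Next I would pin down $\omega_E$ by imposing $\langle\delta,\delta\rangle_S=0$: substituting the solution and clearing the real denominator gives $v_1^2\,\overline{\omega}_E^{\,2}-2\langle v_1,v_2\rangle_S\,\overline{\omega}_E+v_2^2=0$, and conjugating (the coefficients being integers) $v_1^2\,\omega_E^2-2\langle v_1,v_2\rangle_S\,\omega_E+v_2^2=0$ --- the exact mirror of $u_1^2\tau^2-2(u_1,u_2)\tau+u_2^2=0$. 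From here the case analysis copies that of Theorem~\ref{K3 complex attractor}: if $v_1^2=\langle v_1,v_2\rangle_S=0$ then $\Z v_1+\Z v_2$ is degenerate, hence not definite, contradiction; if $v_1^2=0\neq\langle v_1,v_2\rangle_S$ then $\omega_E=\frac{v_2^2}{2\langle v_1,v_2\rangle_S}\in\R\setminus\HH$, contradiction; hence $v_1^2\neq 0$ and $\omega_E=(\langle v_1,v_2\rangle_S\pm\sqrt{-D_{v_1,v_2}})/v_1^2$, which can lie in $\HH$ only if $D_{v_1,v_2}>0$. Finally, the positivity condition cutting out $\Stab^+(\DD_Y)$, namely $\sqrt{-1}\,\mathfrak{b}(\ZZ_{\sigma_\omega},\overline{\ZZ_{\sigma_\omega}})>0$ --- which holds identically on $\mathcal{K}^\C_Y$ because $Y=S\times E$ carries no quantum correction --- restricts on the K3 factor to a positivity of $\langle\delta,\overline{\delta}\rangle_S$; evaluating $\langle\delta,\overline{\delta}\rangle_S$ on the solved-for $\delta$ expresses it, up to a positive factor, through the sign of $v_1^2$, thereby forcing $v_1^2>0$, so that $\omega_E$ is the stated root and $\Z v_1+\Z v_2$ is positive definite. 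The converse runs backwards: positive definiteness yields $v_1^2>0$ and $D_{v_1,v_2}>0$, hence $\omega_E\in\HH$; rescaling $C$ so that the degree-$0$ component of $\delta$ equals $1$ and writing $\delta=(1,\omega_S,s)$, the relation $\langle\delta,\delta\rangle_S=0$ forces $s=\frac{1}{2}(\omega_S,\omega_S)$, i.e.\ $\delta=e^{\omega_S}$ with $\omega_S\in NS(S)_\C$, while the positivity above places $\im\omega_S$ in the positive cone. Uniqueness is immediate from the explicit formulas.

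The step I expect to be the main obstacle is this last one, passing from the $\Stab^+$-positivity to the definiteness of $\Z v_1+\Z v_2$: on the complex side the relevant signature of the transcendental lattice is handed over by Hodge theory, whereas here one must extract the sign of $v_1^2$ from $\sqrt{-1}\,\mathfrak{b}(\ZZ_{\sigma_\omega},\overline{\ZZ_{\sigma_\omega}})>0$, which comes down to an explicit (elementary but bookkeeping-heavy) computation of $\langle\delta,\overline{\delta}\rangle_S$ in terms of $v_1^2,\langle v_1,v_2\rangle_S,v_2^2,D_{v_1,v_2}$ on the solution locus, together with the verification that $\im\omega_S$ is a genuine K\"ahler class rather than merely a positive-norm class. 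Solving the linear system and reducing $\langle\delta,\delta\rangle_S=0$ to the quadratic is routine and parallels Theorem~\ref{K3 complex attractor} line by line.
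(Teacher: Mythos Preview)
Your outline is essentially the paper's proof: invoke Proposition~\ref{attractor rank 2} to solve the linear system for $\delta$, impose $\langle\delta,\delta\rangle_S=0$ to obtain the quadratic in $\omega_E$, run the case analysis of Theorem~\ref{K3 complex attractor}, and for the converse normalize $C$ so that the degree-$0$ component of $\delta$ is $1$ and set $\omega_S=\log\delta$. All of this matches the paper line by line.

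The one place you diverge is your device for forcing $v_1^2>0$: you appeal to the $\Stab^+$ inequality $\sqrt{-1}\,\mathfrak{b}(\ZZ_{\sigma_\omega},\overline{\ZZ_{\sigma_\omega}})>0$ on $\mathcal{K}^\C_Y$. But the proposition's hypothesis is only $\omega_S\in H^{1,1}(S)$ and $\omega_E\in\HH$, not $\omega\in\mathcal{K}^\C_Y$ --- the paper remarks immediately after the proof that the solved-for $\omega_S$ is \emph{not} in general a complexified K\"ahler class, and the weaker fact $(\im\omega_S)^2>0$ is isolated as the separate Proposition~\ref{positive cone}. So your positivity input is not available in the stated generality, and likewise your plan in the converse to place $\im\omega_S$ in the K\"ahler cone overshoots what the proposition actually asserts. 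You are right that this is where the real content lies: the paper itself is terse at exactly this step, writing only ``by a parallel argument to the proof of [Theorem~\ref{K3 complex attractor}]'' without spelling out what replaces the transcendental-lattice signature constraint on the K\"ahler side.
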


\begin{proof}
First, by Proposition \ref{attractor rank 2}, we can solve the K\"ahler attractor equation to obtain 
$$
\delta
=\frac{\sqrt{-1}}{\im(|C|^2 \overline{\omega}_E)}(\overline{C}v_2-\overline{C \omega_E}v_1)
=\frac{- \sqrt{-1}}{C \ \im(\omega_E)}(v_2-\overline{\omega}_Ev_1).
$$
Assume that there exist $\omega_S \in H^{1,1}(S)$ and $\omega_E \in \HH$ satisfying the K\"ahler attractor equation. 
The condition that $\delta$ is of the form $\delta=e^{\omega_S}$ implies that $\left<\delta,\delta\right>=0$ and hence
$$
\omega_E^2v_1^2-2 \omega_E\left<v_1,v_2\right>_S+v_2^2 = 0. 
$$
By a parallel argument to the proof of Proposition \ref{K3 complex attractor}, we conclude that 
the lattice $\Z v_1 + \Z v_2$ is positive definite and 
$$
\omega_E=\frac{\left<v_1,v_2\right>_S + \sqrt{-D_{v_1,v_2}}}{v_1^2} \in \HH. 
$$
On the other hand, assume that $\Z v_1 + \Z v_2$ is positive definite, then $(r_1,r_2) \ne (0,0)$.  
Moreover, we can choose a constant $C$ so that the degree 0 part of 
$$
\delta=\frac{- \sqrt{-1}}{C \ \im(\omega_E)}(v_2-\overline{\omega}_Ev_1)
$$
is $1$, and then $\delta=e^{\omega_S}$ can be solved for $\omega_S=\log \delta  \in H^{1,1}(S)$. 
$\omega_E$ is uniquely determined in a similar manner. 
\end{proof}

Note that $\omega_S \in H^{1,1}(S)$ in Proposition \ref{Kahler attractor 1} is not necessarily a complexified K\"ahler class. 
The best we could prove is the following (cf. Remark \ref{positivity by gCY}).

\begin{Prop} \label{positive cone}
Assume the lattice $\Z v_1+ \Z v_2$ is positive definite. 
For $\omega_S=\log \delta \in H^{1,1}(S)$, we have $ \im(\omega_S)^2>0$.  
\end{Prop}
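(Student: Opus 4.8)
The plan is to recast the target inequality $\im(\omega_S)^2>0$ as a positivity statement for the Mukai self-pairing of $\delta=e^{\omega_S}$, and then to evaluate that pairing using the closed formulas for $\delta$ and $\omega_E$ supplied by Proposition~\ref{Kahler attractor 1}. First I would record the elementary identity $\langle e^{\omega_S},e^{\overline{\omega_S}}\rangle_S=2\,\im(\omega_S)^2$, valid for any $\omega_S\in H^{1,1}(S)$: expanding $e^{\omega_S}=1+\omega_S+\frac{1}{2}\omega_S^2$ and writing $\omega_S=B+\sqrt{-1}K$ with $B,K\in NS(S)_\R$ — legitimate because $\delta$ is built from the algebraic classes $v_i$ and hence its degree-$2$ part lies in $NS(S)_\C$ — the Mukai pairing collapses degree by degree to $(B^2+K^2)-(B^2-K^2)=2K^2$. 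Hence it suffices to prove $\langle\delta,\overline\delta\rangle_S>0$.

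For this I would use the explicit expression $\delta=\mu(v_2-\overline{\omega}_E v_1)$ with $\mu=\frac{-\sqrt{-1}}{C\,\im(\omega_E)}\neq0$, which is nonzero because the degree-$0$ part of $\delta$ is normalized to $1$. Since $v_1,v_2\in NS'(S)$ are real, $\overline\delta=\overline\mu(v_2-\omega_E v_1)$, so that $\langle\delta,\overline\delta\rangle_S=|\mu|^2\langle v_2-\overline{\omega}_E v_1,\;v_2-\omega_E v_1\rangle_S$. Expanding by bilinearity and symmetry of $\langle-,-\rangle_S$, then feeding in the relation $\omega_E^2 v_1^2-2\omega_E\langle v_1,v_2\rangle_S+v_2^2=0$ (which holds since $\langle\delta,\delta\rangle_S=0$, exactly as in the proof of Proposition~\ref{Kahler attractor 1}) together with $\langle v_1,v_2\rangle_S-\omega_E v_1^2=-\sqrt{-D_{v_1,v_2}}$ (immediate from the formula for $\omega_E$ there), a short computation yields
\[
\langle v_2-\overline{\omega}_E v_1,\;v_2-\omega_E v_1\rangle_S=2\,\im(\omega_E)\sqrt{D_{v_1,v_2}}=\frac{2\,D_{v_1,v_2}}{v_1^2}.
\]
Since $\Z v_1+\Z v_2$ is positive definite we have $D_{v_1,v_2}>0$ and $v_1^2>0$, and $\im(\omega_E)>0$ because $\omega_E\in\HH$; therefore $\langle\delta,\overline\delta\rangle_S>0$, and the first step gives $\im(\omega_S)^2>0$.

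The step deserving the most care in a full write-up is the first one: one must track the sign and normalization in the Mukai pairing $\langle-,-\rangle_S$ precisely so that the positivity of $\langle\delta,\overline\delta\rangle_S$ translates into $\im(\omega_S)^2>0$ rather than the opposite inequality; once this is fixed, the middle computation is entirely routine. A conceptually cleaner alternative, which avoids the quadratic relations altogether, is to note that $w:=r_2 v_1-r_1 v_2\in\Z v_1+\Z v_2$ has vanishing degree-$0$ component when $v_i=(r_i,D_i,s_i)$, so $\langle w,w\rangle_S=\pm(r_2 D_1-r_1 D_2)^2$; here $w\neq0$ because $v_1,v_2$ are linearly independent (as $D_{v_1,v_2}\neq0$) and $(r_1,r_2)\neq(0,0)$, so positive definiteness of $\Z v_1+\Z v_2$ fixes the sign of $(r_2 D_1-r_1 D_2)^2$, while a one-line computation shows $\im(\omega_S)$ is a positive real multiple of the algebraic class $r_2 D_1-r_1 D_2$, whence $\im(\omega_S)^2$ has the same sign as $(r_2 D_1-r_1 D_2)^2$.
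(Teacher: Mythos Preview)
Your main argument is correct and genuinely different from the paper's. You reduce the claim to $\langle\delta,\overline\delta\rangle_S>0$ via the identity $\langle e^{\omega_S},e^{\overline{\omega_S}}\rangle_S=2\,\im(\omega_S)^2$, and then evaluate this pairing using the closed formulas for $\delta$ and $\omega_E$ from Proposition~\ref{Kahler attractor 1}; the computation $\langle v_2-\overline{\omega_E}v_1,\,v_2-\omega_E v_1\rangle_S=2D_{v_1,v_2}/v_1^2$ is correct and the conclusion follows immediately from positive definiteness. Your caution about the sign convention of $\langle-,-\rangle_S$ is well placed: under the paper's general convention in Section~\ref{phyexp} the sign flips, but since ``positive definite'' in Proposition~\ref{Kahler attractor 1} is taken in the same convention, the inequality survives.

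The paper's own proof is essentially your ``alternative''. It writes $v_i=(r_i,D_i,s_i)$, identifies $\im(\omega_S)$ as a positive multiple of the class $r_2D_1-r_1D_2\in NS(S)_\R$, and then applies positive definiteness to the element $\frac{1}{r_1}v_1-\frac{1}{r_2}v_2$ (treating the cases $r_1r_2\neq0$ and $r_1r_2=0$ separately), whose Mukai square is the cup-square of $r_2D_1-r_1D_2$ because its degree-$0$ component vanishes. Your formulation with $w=r_2v_1-r_1v_2$ is in fact tidier than the paper's, since it avoids the case split; the non-vanishing of $w$ follows, as you note, from linear independence of $v_1,v_2$ together with $(r_1,r_2)\neq(0,0)$. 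In summary: your principal route is more invariant and needs no case analysis, at the cost of invoking the explicit quadratic relations for $\omega_E$; the paper's route (your alternative) is more elementary and makes the class $\im(\omega_S)$ explicit.
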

\begin{proof}
Recall that we write $v_i=(r_i,D_i,s_i) \in NS'(S)$. 
A straightforward computation of shows that $\im(\omega_S)^2=(r_2D_1-r_1D_2)^2$. 
Since $\Z v_1 + \Z v_2$ is positive definite, $(r_1,r_2) \ne (0,0)$. 
Assume first that $r_1r_2 \ne 0$. 
Then, since $\Z v_1+\Z v_2$ is positive definite
$$
0<(\frac{1}{r_1}v_1-\frac{1}{r_2}v_2)^2=(0,\frac{1}{r_1}D_1-\frac{1}{r_2}D_2,\frac{s_1}{r_1}-\frac{s_2}{r_2})^2=(\frac{1}{r_1}D_1-\frac{1}{r_2}D_2)^2. 
$$
Therefore we have
$$
\im(\omega_S)^2=(r_1r_2)^2(\frac{1}{r_1}D_1-\frac{1}{r_2}D_2)^2>0. 
$$
Assume next that $r_1=0$ and $r_2 \ne 0$.  
Then we have 
$$
\im(\omega_S)^2=r_2^2 D_1^2=r_2^2 v_1^2>0. 
$$
Similarly for $r_1 \ne 0$ and $r_2 = 0$. 
\end{proof}

\begin{Ex} \label{rigid Kahler}
Let $S$ be a K3 surface such that $NS(S)=\Z H$, where $H$ is ample with $H^2=2n>0$. 
It is known that for $v \in NS'(S)$ with $v^2 >0$ there is a sheaf $\mathcal{E}$ such that $v_S(\mathcal{E})=v$. 
In particular there are sheaves $\mathcal{E}_1,\mathcal{E}_2$ whose twisted Mukai vectors are 
$$
v_1=(1,0,-n), \ v_2=(0,-H,0) \in NS'(S). 
$$ 
For $Y=S \times E$, let us consider
$$
F=\mathcal{E}_1 \boxtimes \mathcal{O}_E \oplus \mathcal{E}_2 \boxtimes \mathcal{O}_p \in \NN(Y). 
$$
Then
$$
e^{\sqrt{-1}H} \in (\Z v_1+\Z v_2)_\C, \ \ \ \omega_E=\sqrt{-1} \in \HH
$$
satisfy the K\"ahler attractor equation for $F$.  
\end{Ex}

Let us take a close look at Example \ref{rigid Kahler}.  
\begin{align}
e^{\sqrt{-1}H} &= (1,\sqrt{-1}H,-n) \notag \\
&=v_1+\sqrt{-1}v_2 \in (\Z v_1 + \Z v_2)_\C \subsetneq NS'(S)_\C \notag 
\end{align}
On the other hand, for $\epsilon^2 \notin \Q$, 
\begin{align}
e^{\sqrt{-1}\epsilon H} &=(1,\sqrt{-1} \epsilon H,-\epsilon^2n) \notag \\
&= (1,0,-\epsilon^2n) + \sqrt{-1}\epsilon(0, H, 0) \notag \\
&= (1,0,0) -\epsilon^2 (0,0,n) + \sqrt{-1}\epsilon(0, H, 0) \in NS'(S)_\C. \notag 
\end{align}
Hence there is no proper sublattice $L \subsetneq NS'(S)$ such that $e^{\sqrt{-1}\epsilon H} \in L_\C$. 
Therefore the K\"ahler structure $H$ is not deformable in such a way that $e^{\omega_S} \in L_\C$ for some lattice $L \subset NS'(S)$ of rank 2. 
This calculation illustrates that $e^{B+\sqrt{-1}\omega}$ is able to detect a fine integral structure of the K\"ahler moduli space. 

\begin{Def}
A complexified K\"ahler structure $\omega_S$ is called K\"ahler rigid if there exists a rank $2$ lattice $L\subset NS'(S)$ such that $e^{\omega_S} \in L_\C$.  
\end{Def}

\begin{Thm}[cf. \cite{Kan}]
A complexified K\"ahler structure $B+\sqrt{-1}\kappa \in H^{1,1}(S)$ is K\"ahler rigid if and only if $B \in H^{1,1}(S,\Q)$ and $\kappa^2 \in H^4(S,\Q)$   
\end{Thm}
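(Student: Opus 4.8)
The plan is to compute $e^{\omega_S}$ explicitly in terms of $B$ and $\kappa$ and then analyze when the $\R$-span of its three homogeneous components (together with the integral lattice $NS'(S)$) is cut out to rank $2$. Write $\omega_S = B + \sqrt{-1}\kappa$ with $B, \kappa \in H^{1,1}(S,\R)$. Since $H^{i,i}(S)$ vanishes for $i \ge 3$ on a surface, the exponential truncates: $e^{\omega_S} = 1 + (B + \sqrt{-1}\kappa) + \tfrac12(B+\sqrt{-1}\kappa)^2$, so the degree-$4$ component is $\tfrac12(B^2 - \kappa^2) + \sqrt{-1}(B,\kappa)$ times the point class. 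Separating real and imaginary parts, $e^{\omega_S}$ lies in the complexification of the lattice $L = \Z v_1 + \Z v_2$ of rank $2$ precisely when both $\re(e^{\omega_S}) = (1,\,B,\,\tfrac12(B^2-\kappa^2))$ and $\im(e^{\omega_S}) = (0,\,\kappa,\,(B,\kappa))$ lie in $L_\R$ and span it; equivalently, the $\Q$-span of these two vectors should be defined over $\Q$ (i.e., spanned by rational vectors), since a rank-$2$ integral lattice exists with the right complexification iff the $2$-plane they span in $NS'(S)_\R$ is rational.

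The key steps, in order: first, establish the reduction ``$e^{\omega_S} \in L_\C$ for some rank-$2$ $L \subset NS'(S)$ $\iff$ the real $2$-plane $P = \re(e^{\omega_S})\R + \im(e^{\omega_S})\R \subset NS'(S)_\R$ is a rational subspace,'' using that $NS'(S)$ has finite rank and that a rational subspace always contains a full-rank sublattice of every rank up to its dimension. Second, for the ``if'' direction: assume $B \in H^{1,1}(S,\Q)$ and $\kappa^2 \in H^4(S,\Q)$. Then $\re(e^{\omega_S})$ has rational components: the degree-$0$ and degree-$2$ parts are manifestly rational, and the degree-$4$ part $\tfrac12(B^2 - \kappa^2)$ is rational since $B^2$ and $\kappa^2$ both are. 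Hence $\re(e^{\omega_S}) \in NS'(S)_\Q$. For $\im(e^{\omega_S})$, note it is not individually rational, but one checks that $P$ is still rational by exhibiting a second rational vector in $P$: write $\im(e^{\omega_S}) = (0,\kappa,(B,\kappa))$ and observe $(B,\kappa) = $ the degree-$4$ part of $B \cdot (\text{degree-}2 \text{ part of } \im(e^{\omega_S}))$; more efficiently, multiply $\im(e^{\omega_S})$ by $e^{-B}$ (an invertible rational operator on $NS'(S)_\Q$ since $B$ is rational) — this sends $\im(e^{\omega_S}) = \im(e^B e^{\sqrt{-1}\kappa}) \cdot(\text{real part stripped})$; more directly, $e^{-B} e^{\omega_S} = e^{\sqrt{-1}\kappa} = (1,\sqrt{-1}\kappa, -\tfrac12\kappa^2)$, whose imaginary part is $(0,\kappa,0)$ and whose real part is $(1,0,-\tfrac12\kappa^2)$, both rational under our hypotheses. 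Since $e^{-B}$ is a rational automorphism of $NS'(S)_\Q$ preserving rationality of subspaces, $P$ is rational iff $e^{-B} P$ is, and $e^{-B}P$ is spanned by the two rational vectors $(1,0,-\tfrac12\kappa^2)$ and $(0,\kappa,0)$. Third, for the ``only if'' direction: run the same twist. If $e^{\omega_S} \in L_\C$, then $P$ is rational, so $e^{-B}P$ is rational provided $B$ is rational — but we do not yet know $B$ is rational, so one must first recover rationality of $B$. Here I would argue: rationality of $P$ forces $\re(e^{\omega_S})$ to be a rational multiple of... — no, rather, $P$ contains $\re(e^{\omega_S}) = (1,B,\ast)$, and a rational plane containing a vector with degree-$0$ component $1$ contains a rational vector $(1, B', \ast)$; comparing degree-$2$ parts of rational combinations, one deduces $B \in H^{1,1}(S,\Q)$. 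Then, with $B$ rational, apply $e^{-B}$ to reduce to $e^{\sqrt{-1}\kappa} \in (e^{-B}L)_\C$ with $e^{-B}L$ rational of rank $2$; the imaginary part $(0,\kappa,0)$ and real part $(1,0,-\tfrac12\kappa^2)$ then lie in $(e^{-B}L)_\Q$, giving $\kappa \in H^{1,1}(S,\Q)$ and $\kappa^2 \in H^4(S,\Q)$ — in particular $\kappa^2 \in H^4(S,\Q)$. (Note $\kappa$ itself need not be rational in the intended statement, only $\kappa^2$; the asymmetry in the theorem comes from the fact that $\kappa$ enters $e^{\sqrt{-1}\kappa}$ linearly in one component and quadratically in another, whereas $B$ enters $e^B$ and genuinely rationality of $B$ is needed for the twist to be rational — so I should double-check whether the ``only if'' direction actually yields rational $\kappa$ or only rational $\kappa^2$, and adjust the argument to match the stated hypothesis.)

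The main obstacle I anticipate is precisely this last point: cleanly separating which rationality hypotheses are \emph{forced} versus which merely \emph{suffice}, i.e., showing in the ``only if'' direction that rationality of the $2$-plane $P$ extracts exactly ``$B$ rational and $\kappa^2$ rational'' and not more (and not less). The twist by $e^{-B}$ is the natural tool, but it is only a rational operator once we know $B$ is rational, so the argument has to bootstrap: first extract rationality of $B$ from the structure of $P$ directly (via the component with leading term $1$), then twist. Getting this bootstrap right, and confirming the precise statement (whether $\kappa \in H^{1,1}(S,\Q)$ or only $\kappa^2 \in H^4(S,\Q)$ is the correct necessary condition), is where the real care is needed; the rest is the truncated-exponential bookkeeping already sketched.
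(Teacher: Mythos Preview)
Your ``if'' direction has a genuine gap. After the twist by $e^{-B}$ you claim $(1,0,-\tfrac12\kappa^2)$ and $(0,\kappa,0)$ are ``both rational under our hypotheses'', but the second is rational only when $\kappa$ itself is, and the hypothesis supplies merely $\kappa^2\in H^4(S,\Q)$. When $\rho(S)\ge 2$ this cannot be repaired: for instance, with $NS(S)$ spanned by orthogonal classes $H_1,H_2$ of squares $2,-2$ and with $B=0$, $\kappa=H_1+\tfrac{1}{\sqrt 3}H_2$, one has $\kappa^2=\tfrac43\in\Q$, yet $(0,\kappa,0)$ lies on no rational line and $e^{\sqrt{-1}\kappa}$ is not contained in any rank-$2$ sublattice $L_\C$. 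The paper's argument proceeds instead by writing $\kappa=kH$ with $H\in H^2(S,\Z)$ and $k^2\in\Q$, so that $\im(e^{\omega_S})=k\cdot(0,H,(B,H))$ is a real multiple of an integral vector; this step is automatic when $\rho(S)=1$ (the setting of the preceding Example, which is the intended context), but for general $S$ it amounts to the extra hypothesis that $\kappa$ lie on a rational ray.

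There is also a smaller slip in your ``only if'' direction: from $e^{\omega_S}\in L_\C$ one gets $\re(e^{\omega_S}),\im(e^{\omega_S})\in L_\R$, not $L_\Q$. After twisting this yields $(0,\kappa,0)\in(e^{-B}L)_\R$, hence only that $\kappa$ is proportional to a rational class; $\kappa^2\in\Q$ then follows from the degree-$4$ part of the real vector. Your bootstrap for $B\in NS(S)_\Q$ likewise needs more than the degree-$2$ comparison you indicate: that alone gives only $B-B'\in\R\kappa$, and it is the degree-$4$ component of $\im(e^{\omega_S})$ (namely $(B,\kappa)$ lying on the same rational ray as $\kappa$) that forces the proportionality constant, and hence $B$, to be rational.
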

\begin{proof}
We consider an existence condition of a rank $2$ sublattice $L \subset H^*(M,\Z)$ such that 
$$
e^{B+\sqrt{-1}\kappa}=1 + B + \frac{1}{2}(B^2-\kappa^2)+\sqrt{-1}(\kappa+ B \wedge \kappa) \in L_\C. 
$$
First, $B$ needs to be rational, and hence so is $\kappa^2$. 
Then we may write $\kappa=k H$ for $k^2 \in \Q$ and $H \in H^2(S,\Z)$ with $H^2>0$. 
Indeed, in this case, there exist $m,n \in \N$ such that 
$$
m \mathrm{Re} (e^{B+\sqrt{-1}k H}), \ n \mathrm{Im} (e^{B+\sqrt{-1}kH})  \in H^*(S,\Z). 
$$
and the complexification $L_\C$ of the lattice 
$$
L=\Z m \mathrm{Re} (e^{B+\sqrt{-1} kH}) + \Z n \mathrm{Im} (e^{B+\sqrt{-1} k H}) \subset H^*(S,\Z)
$$
contains $e^{B+\sqrt{-1}k H}$. 
\end{proof}

It is natural to expect that a K\"ahler rigid K3 surface is mirror to a singular K3 surface. 
However, there is an obvious puzzle. 
The dimension of the K\"ahler moduli space of a singular K3 surface is $20$, while the dimension of the complex moduli space of a K\"ahler rigid K3 surface is at most $19$. 
It turns out that the correct framework of mirror symmetry for K3 surfaces is the generalized Calabi--Yau structures developed by Hitchin \cite{Hit} and Huybrechts \cite{Huy}. 
To solve the above puzzle, we need to incorporate deformations as a generalized K3 surface (namely a K\"ahler rigid K3 surface should be defined as a generalized K3 surface).   
A recent article \cite{Kan} investigates mirror symmetry for generalized K3 surfaces with particular emphasis on complex and K\"ahler rigid structures.  

\begin{Rem} \label{positivity by gCY}
As to Proposition \ref{positive cone}, 
the reason why the imaginary part of $\omega_S=\log \delta \in H^{1,1}(S)$ is not necessarily K\"ahler but merely positive is also well-explained from the viewpoint of the generalized Calabi--Yau structures.  
\end{Rem}


\subsection{K\"ahler constellation} \label{Kahler constellation}

From the perspective of homological mirror symmetry, the derived equivalent Calabi--Yau 3-folds share the same mirror Calabi--Yau 3-fold (remember that birationality implies derived equivalence for Calabi--Yau 3-folds \cite{Bri0}). 
It is a folklore conjecture that the complexified K\"ahler cones of the derived equivalent Calabi--Yau 3-folds give local charts of the K\"ahler moduli space $\mathfrak{M}_{\mathrm{Kah}}$ (Conjecture \ref{Bridgeland Conj}).  
Indeed, the union of the K\"ahler cones of birational Calabi--Yau 3-folds form a cone, known as the movable cone, and has been extensively studied in birational geometry. 
From this point of view, the K\"ahler constellation $\Attr_{\mathrm{Kah}}$ of a Calabi--Yau 3-fold $Y$ is defined as the union of the K\"ahler attractors of Calabi--Yau 3-folds derived equivalent to $Y$. 

In light of mirror symmetry, if $X$ and $Y$ are mirror Calabi--Yau 3-folds, then the mirror map should induce a bijective correspondence 
between the complex constellation $\Attr_{\mathrm{Cpx}}^X$ of $X$ and the K\"ahler constellation $\Attr_{\mathrm{Kah}}^Y$ of $Y$. 
$$
 \xymatrix@=18pt{
  \mathfrak{M}_{\mathrm{Cpx}}^X\ar@{}[d]|{\bigcup}  \ar@{}[r]|*{\cong} &   \mathfrak{M}_{\mathrm{Kah}}^Y \ar@{}[d]|{\bigcup} \\
 \Attr^X_{\mathrm{Cpx}}  \ar@{}[r]|*{\cong}& \Attr^Y_{\mathrm{Kah}}
}
$$
The mirror correspondence of the complex rigid and K\"ahler rigid K3 surfaces is one occurrence of such \cite{Kan} (cf. Section \ref{complex attractor mechanism for  ExS} and Section \ref{Kahler attractor for E x S}).


\appendix

\section{}

Let $p^0,q_0\in\R$ and $P=(P^{ij}),Q=(Q_{ij})\in M_3(\R)$. 
We investigate conditions on $p^0,q_0, P, Q$ under which there exist $C\in\C$ and $A=(A^{ij})\in M_3(\C)$ such that the following system of equations hold. 
\begin{align}
    \re(C) &= p^0, \label{eqn:A1} \\
    \re(CA^{ij}) &=P^{ij}, \label{eqn:A2} \\
    \re(C\Cof(A)_{ij}) &=-Q_{ij}, \label{eqn:A3} \\
    \re(C\det(A)) & =q_0. \label{eqn:A4}
\end{align}
Here $\Cof(A)_{ij}$ denotes the $(i,j)$-th entry of the cofactor matrix of $A$.

We first define 
\begin{align}
R \coloneqq & \Cof(P)+p^0Q, \notag \\
M \coloneqq & 2\det(P)+(p^0)^2q_0+p^0\tr(P^TQ), \notag \\
D \coloneqq & 2\left((\tr P^TQ)^2-\tr((P^TQ)^2)\right)-(p^0q_0+\tr(P^TQ))^2 \notag \\
 &  \ +4(p^0\det(Q)-q_0\det(P)). \notag 
\end{align}

\begin{Lem}
\label{lem:RMD}
We have the identity
\begin{equation}
\label{eqn:RMD}
4\det(R)-M^2=(p^0)^2D.
\end{equation}
\end{Lem}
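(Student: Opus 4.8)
The plan is to regard the asserted identity as an equality of polynomials in $p^0$, $q_0$ and the $18$ entries of $P$ and $Q$, and to verify it by expanding both sides in powers of $p^0$. Both $4\det(R)$ and $M^2$ are polynomials in $p^0$ of degree $\le 4$, so it suffices to compare their difference with $(p^0)^2D$ coefficient by coefficient; no reality or positivity hypothesis on $p^0,q_0,P,Q$ will be used.

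First I would expand $\det(R)=\det(\Cof(P)+p^0Q)$ by multilinearity of the determinant in its three columns, grouping the eight resulting terms according to how many columns are taken from $p^0Q$:
\[
\det(R)=\det(\Cof(P))+p^0\,\alpha_1+(p^0)^2\,\alpha_2+(p^0)^3\det(Q).
\]
The four coefficients are then computed from the adjugate calculus for $3\times3$ matrices (here $\Cof(A)^{T}=\mathrm{adj}(A)$ is the classical adjoint, so $A\,\Cof(A)^{T}=\det(A)I$). One has $\det(\Cof(P))=(\det P)^2$; by Jacobi's formula $\alpha_1=\tr\big(\Cof(\Cof(P))^{T}Q\big)$, and since $\Cof(\Cof(P))=(\det P)\,P$ for $3\times3$ matrices this gives $\alpha_1=(\det P)\tr(P^{T}Q)$; and, using the homogeneity $\det(\Cof(P)+p^0Q)=(p^0)^3\det\big(Q+(p^0)^{-1}\Cof(P)\big)$ together with Jacobi's formula applied to $\det(Q+sM)$, one finds $\alpha_2=\tr\big(\Cof(Q)^{T}\Cof(P)\big)$. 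The key point is this last coefficient: writing $\Cof(Q)^{T}=\mathrm{adj}(Q)$ and $\Cof(P)=\mathrm{adj}(P)^{T}=\mathrm{adj}(P^{T})$, using $\mathrm{adj}(P^{T}Q)=\mathrm{adj}(Q)\,\mathrm{adj}(P^{T})$, and then invoking $\tr(\mathrm{adj}(A))=\tfrac12\big((\tr A)^2-\tr(A^2)\big)$ for $3\times3$ matrices, we obtain
\[
\alpha_2=\tr\big(\mathrm{adj}(P^{T}Q)\big)=\tfrac12\big((\tr P^{T}Q)^2-\tr((P^{T}Q)^2)\big).
\]
Hence $4\det(R)=4(\det P)^2+4p^0(\det P)\tr(P^{T}Q)+2(p^0)^2\big((\tr P^{T}Q)^2-\tr((P^{T}Q)^2)\big)+4(p^0)^3\det Q$.

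Finally I would expand $M^2$ directly from $M=2\det P+p^0\tr(P^{T}Q)+(p^0)^2q_0$, subtract, and observe that the degree-$0$ and degree-$1$ terms in $p^0$ cancel identically. Comparing the remaining coefficients of $(p^0)^2$, $(p^0)^3$, $(p^0)^4$ with those of $(p^0)^2D$ — after expanding $(p^0q_0+\tr P^{T}Q)^2$ in the definition of $D$ — reduces to three elementary scalar identities, which completes the proof. The only delicate step is the computation of $\alpha_2$: one must keep careful track of transposes through the adjugate identities $\Cof(\Cof(P))=(\det P)P$, $\mathrm{adj}(AB)=\mathrm{adj}(B)\,\mathrm{adj}(A)$, $\mathrm{adj}(A^{T})=\mathrm{adj}(A)^{T}$, and $\tr(\mathrm{adj}(A))=\tfrac12((\tr A)^2-\tr(A^2))$. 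All of these are polynomial identities in the matrix entries (they may be checked first for invertible $P,Q$ and then extended by continuity), so everything goes through over $\R$ — or indeed any commutative ring.
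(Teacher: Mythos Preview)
Your proof is correct. Both you and the paper compute $\det(R)$ as a polynomial in $p^0$ and then compare with $M^2+(p^0)^2D$, so the overall strategy is the same; the only difference is how the expansion is obtained. The paper assumes $P$ invertible, left-multiplies by $P^T$ to reduce to $\det(P)^2\det\!\big(E_3+\tfrac{p^0}{\det P}P^TQ\big)$, applies the $3\times3$ formula $\det(E_3+B)=1+\tr B+\tfrac12((\tr B)^2-\tr B^2)+\det B$, and then passes to all $P$ by density. You instead expand $\det(\Cof(P)+p^0Q)$ directly via multilinearity and the adjugate identities $\Cof(\Cof(P))=(\det P)P$, $\mathrm{adj}(AB)=\mathrm{adj}(B)\,\mathrm{adj}(A)$, and $\tr(\mathrm{adj}(A))=\tfrac12((\tr A)^2-\tr A^2)$, which yields exactly the same coefficients $\alpha_0,\alpha_1,\alpha_2,\alpha_3$ without ever inverting $P$. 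Your route is marginally more self-contained; the paper's is slightly slicker once one thinks of the $P^T$-trick. Either way the final coefficient comparison in $p^0$ is identical.
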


\begin{proof}
We first assume that $P$ is invertible. 
Then $\det(R)$ can be expressed as 
\begin{align*}
    \det\left(\Cof(P)+p^0Q\right) & =\det(P)^{-1}\det\left(P^T\Cof(P)+p^0P^TQ\right)\\
    &=\det(P)^2\det\left(E_3+\frac{p^0}{\det(P)}P^TQ\right)
\end{align*}
where $E_3 \in M_3(\C)$ denotes the identity matrix. 
Using the standard formula 
\[
\det(E_3+B)=1+\tr(B)+\frac12\left((\tr B)^2-\tr(B^2)\right)+\det(B)
\]
for $B \in M_3(\C)$, we can check by direct computation that the Equation (\ref{eqn:RMD}) holds.

Since the Equation (\ref{eqn:RMD}) is an algebraic identity for $(p^0,q_0,P,Q) \in \R^2\times M_3(\R)^2$ 
which holds in the open dense subset $\{\det(P)\neq0\} \subset \R^2\times M_3(\R)^2$, 
it remain valid in $\R^2\times M_3(\R)^2$. 
\end{proof}

\begin{Thm}
\label{Thm:AttractorSol}
Suppose that $p^0,q_0,P,Q$ are not all zero. The following two statements are equivalent:
    \begin{enumerate}
        \item There exists $(C,A)$ satisfying (\ref{eqn:A1})-(\ref{eqn:A4}) in which $\im(A)$ is invertible;
        \item $\det(R)>0$ and $D>0$.
    \end{enumerate}
In this case, there are exactly two solutions $(C,A)$, given by
\begin{align}
C &= p^0\pm \sqrt{-1} \frac{M}{\sqrt{D}}, \notag \\
A &= \left(2PQ^T-(p^0q_0+\tr(P^TQ))E_3\mp \sqrt{-D}\right)(2R)^{-1,T}. \notag 
\end{align}

Moreover, the following two statements are equivalent:
    \begin{enumerate}
        \item There exists $(C,A)$ satisfying (\ref{eqn:A1})-(\ref{eqn:A4}) in which $A$ is symmetric and $\im(A)$ is positive definite;
        \item $P$ and $Q$ are symmetric, $R$ is positive definite, and $D>0$.
    \end{enumerate}
In this case, the unique solution is given by
\begin{align}
C &= p^0- \sqrt{-1} \frac{M}{\sqrt{D}}, \notag \\
A &= \left(2PQ-(p^0q_0+\tr(PQ))E_3+\sqrt{-D}\right)(2R)^{-1}. \notag 
\end{align}
\end{Thm}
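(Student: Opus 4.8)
The plan is to reduce the real system (\ref{eqn:A1})--(\ref{eqn:A4}) to an explicit system in the two real unknowns $\nu := \im(C)$ and $S := \im(CA) \in M_3(\R)$, to solve that system, and to read off the existence conditions; Lemma~\ref{lem:RMD} will be precisely the identity that turns the discriminant of the resulting quadratic into $D$. First note $C \ne 0$ (otherwise $p^0,q_0,P,Q$ all vanish, which is excluded). Writing $C = p^0 + \sqrt{-1}\,\nu$ and $CA = P + \sqrt{-1}\,S$, so that $A = (P + \sqrt{-1}\,S)\,C^{-1}$, and introducing the real quantities $\widetilde S = \im(C\,\Cof(A))$ and $\widetilde d = \im(C\det(A))$, equations (\ref{eqn:A3}) and (\ref{eqn:A4}) become, via $\Cof(\lambda B) = \lambda^2\Cof(B)$ and $\det(\lambda B) = \lambda^3\det(B)$,
$$\Cof\big(P + \sqrt{-1}\,S\big) = C\big(-Q + \sqrt{-1}\,\widetilde S\big), \qquad \det\big(P + \sqrt{-1}\,S\big) = C^{2}\big(q_0 + \sqrt{-1}\,\widetilde d\big).$$
Expanding the left sides into real and imaginary parts — these are quadratic and cubic polynomial maps of $S$, with $\Cof(P)$ and $\det(P)$ the $S$-free terms — and separating, the imaginary parts express $\widetilde S$ and $\widetilde d$ polynomially in $(\nu,S)$ when $p^0 \ne 0$; feeding these back into the real parts leaves, after collecting terms, a relation forcing $S$ to be an explicit affine function of $\nu$ with coefficient matrix $R = \Cof(P) + p^0 Q$ (so solvability requires $\det R \ne 0$), together with one scalar equation which, after eliminating $S$, is a quadratic in $\nu$. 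Every step is reversible since $C \ne 0$, so this is a faithful reformulation of the system. The case $p^0 = 0$ is separate and shorter: then $C = \sqrt{-1}\,\nu$ forces $\im(A) = -P/\nu$, invertible exactly when $\det P \ne 0$, i.e. $\det R = \det(P)^2 > 0$, and $\nu$ is read off directly.

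The tedious heart of the argument is the computation matching the discriminant of the quadratic for $\nu$, up to a positive factor, with $D$; this is where Lemma~\ref{lem:RMD} and the adjugate identities $P\,\Cof(P)^T = \Cof(P)^T P = \det(P)E_3$ enter, and a convenient way to package the outcome is the relation $|C|^2 D = 4\det R$. It then follows that a solution with $\im(A)$ invertible forces $D > 0$ — when $D \le 0$ the reformulation yields a real (or, if $M \ne 0$ and $D = 0$, undefined) pair $(C,A)$ — and then $\det R > 0$ (automatic from (\ref{eqn:RMD}) when $p^0 \ne 0$, and treated directly when $p^0 = 0$). Conversely, if $\det R > 0$ and $D > 0$, the quadratic has the real roots $\nu = \pm M/\sqrt D$, the corresponding $S$ exists, and $\im(A) = \mp\sqrt D\,(2R)^{-1,T}$ is invertible; the two choices of sign of $\sqrt{-D}$ give exactly the two displayed pairs $(C,A)$, distinct since $D \ne 0$. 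This settles the first equivalence.

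For the refined statement I would argue as follows. If a solution has $A$ symmetric and $\im(A)$ positive definite, then $CA$ is symmetric (as $C$ is a scalar), hence $P = \re(CA)$ is symmetric, hence $\Cof(A)$ is symmetric and $Q = -\re(C\,\Cof(A))$ is symmetric, so $R$ is symmetric; since $\im(A) = \pm\tfrac{\sqrt D}{2}R^{-1}$, positive definiteness selects the sign with $+\sqrt{-D}$ in $A$ (and $-\sqrt{-1}\,M/\sqrt D$ in $C$), forces $R$ positive definite and $D > 0$, and hence gives uniqueness. Conversely, if $P,Q$ are symmetric, $R$ is positive definite, and $D > 0$, then the displayed pair solves the system by the first part; writing $A = Y(2R)^{-1}$ with $Y = 2PQ - (p^0 q_0 + \tr(PQ))E_3 + \sqrt{-D}\,E_3$, the symmetry of $A$ is equivalent to $RY = Y^{T}R$, i.e. to $R\,PQ = QP\,R$, which holds because $R\,PQ = \det(P)Q + p^0\,QPQ = QP\,R$ using $\Cof(P)P = P\,\Cof(P) = \det(P)E_3$ for symmetric $P$; and $\im(A) = \tfrac{\sqrt D}{2}R^{-1}$ is positive definite. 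The main obstacle will be the elimination in the first paragraph and the discriminant identification in the second; a secondary nuisance is keeping the degenerate case $p^0 = 0$ in line with the uniform statement at each stage.
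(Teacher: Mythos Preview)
Your strategy is essentially the paper's: parametrize by $\nu=\im(C)$ and a matrix unknown, reduce via the cofactor and determinant equations, and invoke Lemma~\ref{lem:RMD} to identify $D$. The paper uses $X=\im(A)$ rather than your $S=\im(CA)$; with $X$, equation~(\ref{eqn:A3}) collapses directly to the clean relation $\Cof(X)=R/|C|^{2}$, giving $\det(R)>0$ immediately, and together with~(\ref{eqn:A4}) one gets $2\nu\,|C|^{2}\det(X)=-M$, from which $\nu=\pm M/\sqrt{D}$ and then $X$, $\re(A)$ follow. Your claim that $S$ comes out as an \emph{affine} function of $\nu$ with ``coefficient matrix $R$'' is not right: after eliminating $\widetilde S$ the matrix equation you obtain is $p^{0}R+\nu^{2}Q+\nu\,\Cof_2(P,S)-p^{0}\Cof(S)=0$, which is quadratic in $S$, not linear. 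You can repair this either by switching to $X$ (where the quadratic is exactly a cofactor and hence tractable) or by combining your cofactor and determinant identities multiplicatively via $\Cof(B)^{T}B=\det(B)E_3$ instead of trying to isolate $S$ linearly; either route lands on the key identity $|C|^{2}D=4\det(R)$ you already anticipate. Your treatment of the second equivalence --- in particular the verification $R(PQ)=(QP)R$ forcing $A$ symmetric --- is correct and in fact more explicit than what the paper supplies.
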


\begin{proof}
We first prove the theorem under the assumption that $p^0\neq0$ and $M\neq0$.  
Suppose that $(C,A)$ satisfies the Equation (\ref{eqn:A1})-(\ref{eqn:A4}) and $\im(A)$ is invertible. 
By the Equation (\ref{eqn:A1}), $C \in \C$ can be written as
\[
C=p^0+i\zeta^0
\]
for some $\zeta^0\in\R$.
We denote
\[
X\coloneqq\im(A)\in M_3(\R).
\]
The Equation (\ref{eqn:A2}) gives
\begin{align*}
    P^{ij} & =\re\left((p^0+i\zeta^0)(\re(A^{ij}) +  \sqrt{-1} \im(A^{ij}))\right)\\
    & =p^0\re(A^{ij})-\zeta^0\im(A^{ij}).
\end{align*}
Since we assume $p^0\neq0$, therefore
\[
\re(A^{ij})=\frac{1}{p^0}(P^{ij}+\zeta^0 X^{ij}).
\]
Using these expressions of the real and imaginary parts of $A$, we have
\begin{align*}
    \re(CA^{ij}A^{kl}) &=\re\left((p^0+i\zeta^0)(\re(A^{ij}) +  \sqrt{-1} \im(A^{ij}))(\re(A^{kl}) +  \sqrt{-1} \im(A^{kl}))\right)\\
    &=\frac{1}{p^0}(P^{ij}P^{kl}-|C|^2X^{ij}X^{kl})
\end{align*}
for any $1\leq i,j,k,l\leq3$.
Together with the Equation (\ref{eqn:A3}), we have
\[
-Q=\re(C\Cof(A))=\frac{1}{p^0}(\Cof(P)-|C|^2\Cof(X)).
\]
Hence
\begin{equation}\label{eqn:cofactorquad}
    \Cof(X)=\frac{1}{|C|^2}(\Cof(P)+p^0Q)=\frac{1}{|C|^2}R.
\end{equation}
Since $X=\im(A)$ is invertible, we have 
$$
\det(\Cof(X))=\det(X)^2>0
$$
therefore $\det(R)>0$.

Using again the expressions of $\re(A)$ and $\im(A)$, we have
\begin{multline*}
    \re(CA^{ij}A^{kl}A^{mn})=\frac{1}{(p^0)^2}\Big(-2\zeta^0|C|^2X^{ij}X^{kl}X^{mn}+P^{ij}P^{kl}P^{mn}\\-|C|^2(P^{ij}X^{kl}X^{mn}+P^{kl}X^{ij}X^{mn}+P^{mn}X^{ij}X^{kl})\Big)
\end{multline*}
Together with the Equation (\ref{eqn:A4}) and (\ref{eqn:cofactorquad}), we obtain
\begin{align*}
    q_0&=\re(C\det(A) \\
    &=\frac{1}{(p^0)^2}\left(-2\zeta^0|C|^2\det(X)+\det(P)-|C|^2\tr(P^T\Cof(X))\right)\\
    &=\frac{1}{(p^0)^2}\left(-2\zeta^0|C|^2\det(X)+\det(P)-\tr\left(P^T(\Cof(P)+p^0Q)\right)\right)\\
    &=\frac{1}{(p^0)^2}\left(-2\zeta^0|C|^2\det(X)-2\det(P)-p^0\tr(P^TQ)\right).
\end{align*}
Hence
\begin{equation}
\label{eqn:fromA4}
2\zeta^0|C|^2\det(X)=-M
\end{equation}
by the definition of $M$. Since we assume that $M\neq0$, therefore $\zeta^0\neq0$. Using again Equation (\ref{eqn:cofactorquad}), we have
\[
\frac{M^2}{4(\zeta^0)^2|C|^4}=\det(X)^2=\det(\Cof(X))=\frac{1}{|C|^6}\det(R).
\]
By Lemma~\ref{lem:RMD},
\[
(p^0)^2D=4\det(R)-M^2=M^2\left(\frac{|C|^2}{(\zeta^0)^2}-1\right)=\frac{M^2(p^0)^2}{(\zeta^0)^2}.
\]
Hence $D>0$ and
\[
\zeta^0=\pm\frac{M}{\sqrt{D}}.
\]
By Equation (\ref{eqn:cofactorquad}) and (\ref{eqn:fromA4}), we have
\[
X=|C|^2\det(X)R^{-1,T}=\mp\sqrt{D}(2R)^{-1,T}.
\]
Hence
\begin{align*}
\re(A)&=\frac{1}{p^0}(P+\zeta^0X) \\
&=\frac{1}{p^0}(P-\frac{M}{2}R^{-1,T})=\frac{1}{p^0}(PR^T-\frac{M}{2}E_3)R^{-1,T}\\
&=\left(PQ^T-\frac{1}{2}\left(p^0q_0+\tr\left(P^TQ\right)\right)E_3\right)R^{-1,T}.
\end{align*}
Therefore
\[
A=\left(2PQ^T-(p^0q_0+\tr(P^TQ))E_3\mp \sqrt{-D}\right)(2R)^{-1,T}.
\]
This proves the statement under the assumption that $p^0\neq0$ and $M\neq0$. 

Next we assume that $p^0\neq0$ and $M=0$. 
Then the above argument up to the Equation \ref{eqn:fromA4} is still valid. 
By Lemma~\ref{lem:RMD} and (\ref{eqn:cofactorquad}), we have
\[
D=\frac{4\det(R)}{(p^0)^2}>0.
\]

Now assume that $p^0=0$. 
First we observe that the solutions are still valid in this case. 
Hence it suffices to show that they are the only two solutions.
Since $p^0,q_0,P,Q$ are not all zero, we have $c=i\zeta^0\neq0$. Then Equation (\ref{eqn:A2}) gives
\[
X\coloneqq\im(A)=\frac{-1}{\zeta^0}P.
\]
Note that $P$ is invertible since $X=\im(A)$ is assumed to be invertible.
We denote
\[
Y\coloneqq\re(A).
\]
Then
\begin{align*}
    \re(CA^{ij}A^{kl}) &=\re\left((i\zeta^0)(\re(A^{ij}) + \sqrt{-1} \im(A^{ij}))(\re(A^{kl}) +  \sqrt{-1} \im(A^{kl}))\right)\\
    &=P^{ij}Y^{kl}+P^{kl}Y^{ij}.
\end{align*}
By Equation (\ref{eqn:A3}), we have
\[
\begin{pmatrix}
0 & 0 & 0 & 0 & P^{33} & -P^{32} & 0 & -P^{23} & P^{22} \\
0 & 0 & 0 & -P^{33} & 0 & P^{31} & P^{23} & 0 & -P^{21} \\
0 & 0 & 0 & P^{32} & -P^{31} & 0 & -P^{22} & P^{21} & 0 \\
0 & -P^{33} & P^{32} & 0 & 0 & 0 & 0 & P^{13} & -P^{12} \\
P^{33} & 0 & -P^{31} & 0 & 0 & 0 & -P^{13} & 0 & P^{11} \\
-P^{32} & P^{31} & 0 & 0 & 0 & 0 & P^{12} & -P^{11} & 0 \\
0 & P^{23} & -P^{22} & 0 & -P^{13} & P^{12} & 0 & 0 & 0 \\
-P^{23} & 0 & P^{21} & P^{13} & 0 & -P^{11} & 0 & 0 & 0 \\
P^{22} & -P^{21} & 0 & -P^{12} & P^{11} & 0 & 0 & 0 & 0 \\
\end{pmatrix}
\begin{pmatrix}
Y^{11}\\Y^{12}\\Y^{13}\\
Y^{21}\\Y^{22}\\Y^{23}\\
Y^{31}\\Y^{32}\\Y^{33}\\
\end{pmatrix}
=-
\begin{pmatrix}
Q_{11}\\Q_{12}\\Q_{13}\\
Q_{21}\\Q_{22}\\Q_{23}\\
Q_{31}\\Q_{32}\\Q_{33}\\
\end{pmatrix}
\]
The $9\times9$ matrix on the left hand side has determinant $-2(\det(P))^3\neq0$, hence $Y$ can solved uniquely.
\[
\re(CA^{ij}A^{kl}A^{mn})=(P^{ij}Y^{kl}Y^{mn}+P^{kl}Y^{ij}Y^{mn}+P^{mn}Y^{ij}Y^{kl})-\frac{1}{(\zeta^0)^2}P^{ij}P^{kl}P^{mn}.
\]
By Equation (\ref{eqn:A4}),
\[
q_0=\tr(P^T\Cof(Y))-\frac{1}{(\zeta^0)^2}\det(P).
\]
This solves $\zeta^0$ up to sign. This proves that there are no other solutions.
\end{proof}

\section{}

\begin{Thm}[Thoerem \ref{attractor = 9}]
The complex constellation $\Attr_{\mathrm{Cpx}}$ bijectively corresponds to the abelian 3-folds with Picard number $9$.  
\end{Thm}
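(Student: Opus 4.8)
The plan is to establish the two set-theoretic inclusions separately; the ``bijective correspondence'' is then a matter of unwinding definitions, since the points of $\mathfrak{M}_{\mathrm{Cpx}}$ are by construction the abelian $3$-folds $X_T=\C^3/(\Z^3+T\Z^3)$ (up to the relevant equivalence) and the Picard number is an isomorphism invariant; one should nonetheless pin down in what sense $\mathfrak{M}_{\mathrm{Cpx}}$ remembers these abelian $3$-folds (e.g.\ whether a polarization is part of the datum) so that the word ``bijective'' is literally correct. The inclusion $\Attr_{\mathrm{Cpx}}\subseteq\{\,X_T:\rho(X_T)=9\,\}$ is exactly the Corollary following Theorem~\ref{Thm:Moore}. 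So everything reduces to the converse: \emph{if $\rho(X_T)=9$ then $X_T$ is a complex attractor variety for some $\gamma\in H_3(X_T,\Z)$}. By the converse implication in Theorem~\ref{attractor eq them}, this amounts to producing a nonzero class $\gamma^{PD}\in H^3(X_T,\Z)$ lying in $H^{3,0}(X_T)\oplus H^{0,3}(X_T)$; the nonvanishing $Z(\Omega_{X_T},\gamma)\neq0$ demanded in the definition of a complex attractor is then automatic, for writing the real class as $\gamma^{PD}=a\,\Omega_{X_T}+\bar a\,\overline{\Omega_{X_T}}$ with $a\neq0$ (possible since it lies in that plane) gives $\int_\gamma\Omega_{X_T}=\bar a\int_{X_T}\overline{\Omega_{X_T}}\wedge\Omega_{X_T}$, which is nonzero by the Hodge--Riemann bilinear relation together with $\Omega_{X_T}\wedge\Omega_{X_T}=0$.

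Thus the whole content is to show that $\left(H^{3,0}(X_T)\oplus H^{0,3}(X_T)\right)\cap H^3(X_T,\Z)\neq0$ whenever $\rho(X_T)=9$. First I would invoke Theorem~\ref{maximal picard number}: $\rho(X_T)=9$ forces $X_T\cong E_1\times E_2\times E_3$ with the $E_i$ pairwise isogenous elliptic curves admitting complex multiplication, hence all three with CM by orders in one and the same imaginary quadratic field $K=\Q(\sqrt{-D})$. Then I would pass to the rational Hodge structure on $H^3$: by the K\"unneth theorem $H^3(X_T,\Q)$ contains the sub-Hodge structure $H^1(E_1,\Q)\otimes_\Q H^1(E_2,\Q)\otimes_\Q H^1(E_3,\Q)$, the CM action identifies each $H^1(E_i,\Q)$ with $K$ as a $\Q$-vector space, so this tensor product is $\cong K^{\otimes 3}$, and its complexification splits into the eight lines $\C_{\psi_1}\otimes\C_{\psi_2}\otimes\C_{\psi_3}$ indexed by triples of embeddings $\psi_i\colon K\hookrightarrow\C$.

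The key observation is that under this identification $H^{3,0}(X_T)=H^{1,0}(E_1)\otimes H^{1,0}(E_2)\otimes H^{1,0}(E_3)$ is one of these lines, say $\C_{\phi_1}\otimes\C_{\phi_2}\otimes\C_{\phi_3}$, and $H^{0,3}(X_T)$ is its complex conjugate $\C_{\bar\phi_1}\otimes\C_{\bar\phi_2}\otimes\C_{\bar\phi_3}$. Since every $\psi_i$ factors through $K$, the group $\mathrm{Gal}(\bar\Q/\Q)$ permutes the eight lines through its quotient $\mathrm{Gal}(K/\Q)=\{1,c\}$, and $c$ acts diagonally by $\psi_i\mapsto\bar\psi_i$; hence $\{(\phi_1,\phi_2,\phi_3),(\bar\phi_1,\bar\phi_2,\bar\phi_3)\}$ is a single Galois orbit, so $H^{3,0}(X_T)\oplus H^{0,3}(X_T)$ is the complexification of a $2$-dimensional $\Q$-subspace $W\subset H^3(X_T,\Q)$. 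Intersecting $W$ with the lattice $H^3(X_T,\Z)$ (which spans $H^3(X_T,\Q)$ over $\Q$) yields a rank-$2$ lattice, every nonzero element of which is a $\gamma^{PD}$ for a complex attractor $\gamma$ (indeed of rank $2$, matching the Corollary). This shows $X_T\in\Attr_{\mathrm{Cpx}}$ and completes the argument.

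The step I expect to be the main obstacle is the explicit identification in the last two paragraphs --- matching the analytic Hodge decomposition of $\bigwedge^3H^1$ of the product with the arithmetic CM decomposition, and in particular upgrading ``the plane $H^{3,0}\oplus H^{0,3}$ is defined over $\Q$'' to ``it meets the integral lattice nontrivially'', i.e.\ keeping integral versus rational structures straight. There is also the (minor but genuine) bookkeeping point about ``bijective'' mentioned above. An alternative, more computational route --- the one indicated for Appendix~B --- avoids CM language altogether: one translates $\rho(X_T)=9$ into the statement that the period point $T$ satisfies a system of rational relations among its entries, those of $\Cof(T)$, and $\det T$, exhibits symmetric $P,Q\in M_3(\Q)$ and $p^0,q_0\in\Q$ solving Moore's system, and invokes Theorem~\ref{Thm:Moore} together with Theorem~\ref{Thm:AttractorSol}; the equivalence between the Hodge-theoretic condition $\rho(X_T)=h^{1,1}(X_T)=9$ and the arithmetic condition that $T$ lies on Moore's attractor locus is then the (tedious) heart of the matter.
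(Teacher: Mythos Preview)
Your proposal is correct and takes a genuinely different route from the paper's. The paper's Appendix~B follows exactly the computational alternative you sketch in your final paragraph: starting from the isogeny datum $TR\in M_3(\Z)+\sqrt{-D}\,E_3$ (with $R=\sqrt{D}\,\mathrm{Im}(T)^{-1}$ symmetric positive definite, coming from Theorem~\ref{maximal picard number}), it writes down by hand $p^0,q_0\in\Z$ and symmetric $P,Q$ obtained by formally inverting Moore's formulas, and then feeds these back into Theorem~\ref{Thm:AttractorSol} to check that the resulting attractor point $\widetilde T$ coincides with $T$. Your CM/Hodge-theoretic argument bypasses this computation entirely: the key step --- that $K\otimes_\Q K\otimes_\Q K\cong K^4$ as $\Q$-algebras, with one of the four primitive idempotents cutting out precisely the line-pair $\{(\phi,\phi,\phi),(\bar\phi,\bar\phi,\bar\phi)\}$ --- immediately shows $H^{3,0}\oplus H^{0,3}$ is defined over~$\Q$, and moreover yields rank~$2$ for free. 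The paper's approach has the virtue of producing an explicit charge $\gamma$ and staying entirely within the concrete $\mathfrak{H}_3$-framework of Section~\ref{complex attractor mechanism for torus}; yours is shorter, more conceptual, and makes the link to complex multiplication transparent. The worry you flag about integral versus rational structures is not a genuine obstacle: any $\Q$-subspace $W\subset H^3(X_T,\Q)$ meets the full-rank lattice $H^3(X_T,\Z)$ in a sublattice of rank $\dim_\Q W$, since a $\Q$-basis of $W$ can be scaled into $H^3(X_T,\Z)$.
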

\begin{proof}
It suffices to show that an abelian 3-fold $X_T = \C^3/(\Z^3 + T \Z^3)$ with $\rho(X_T)=9$ is a complex attractor variety. 
By Theorem \ref{maximal picard number}, $X_T$ is isogenous to $(E_{\sqrt{-D}})^3$ for some $D\in\N$. 
Therefore there exists a lattice embedding $\Z^3 + \sqrt{-D}\Z^3 \hookrightarrow \Z^3 + T \Z^3$ induced by $R\in M_3(\Z)$ such that
\[
TR\in M_3(\Z)+\sqrt{-D}E_3.
\]
Then $R=\sqrt{D} \mathrm{Im}(T)^{-1}$ is symmetric and positive definite. 
We will find a $3$-cycle $\gamma \in H_3(X_T,\Z)$ for which $T$ is a complex attractor point.  
We write $\gamma$ as 
$$
\gamma \ = q_0 A_0+ \sum_{i,j=1}^3Q_{ij} A_{ij}+ \sum_{i,j=1}^3P^{ij} B^{ij}+p^0 B^0
$$
as in Section \ref{complex attractor mechanism for torus}. 
We introduce 
\begin{itemize}
\item $n\coloneqq(D+1)\det(R)\in\N$.
\item $M\coloneqq2n\det(R)\in\N$.
\item $S\coloneqq 2n(TR-\sqrt{-D}E_3)=2n\mathrm{Re}(T)R\in M_3(\Z)$.
\item $p^0\coloneqq\det(R)\in\Z$.
\item $P\coloneqq(p^0S+ME_3)(2nR)^{-1}\in M_3(\Q)$.
\item $Q\coloneqq\frac{1}{p^0}(nR-\mathrm{Cof}(P))\in M_3(Q)$.
\item $q_0\coloneqq\frac{1}{(p^0)^2}(2n\det(R)-2\det(P)-p^0\tr(PQ))\in\Q$.
\end{itemize}
Now we show that $T$ is an attractor of $\gamma$ given by $(p^0,P,Q,q_0)$. 
Define as in Appendix A:
\begin{align}
\widetilde R  \coloneqq & \Cof(P)+p^0Q, \notag \\
\widetilde M  \coloneqq & 2\det(P)+(p^0)^2q_0+p^0\tr(PQ),\notag \\
\widetilde D  \coloneqq & 2\left((\tr PQ)^2-\tr((PQ)^2)\right)-(p^0q_0+\tr(PQ))^2 \notag \\
& \ +4(p^0\det(Q)-q_0\det(P)). \notag 
\end{align}
Then we have 
$$
\widetilde R=nR, \ \ \ \widetilde M=2n\det(R)=M.
$$ 
Moreover, by Lemma~\ref{lem:RMD}, 
\begin{align*}
	\widetilde D & =\frac{1}{(p^0)^2}(4\det(\widetilde R)-\widetilde M^2) \\
	& = \frac{1}{\det(R)^2}(4n^3\det(R)-4n^2\det(R)^2)\\
	& = 4(D+1)^2\det(R)(n-\det(R))\\
	& = 4D(D+1)^2\det(R)^2\\
	& = 4n^2D>0.
\end{align*}
By Theorem~\ref{Thm:AttractorSol}, an attractor is given by
\[
\widetilde T=\left(2PQ-(p^0q_0+\tr(PQ))E_3+\sqrt{-\widetilde D}\right)(2\widetilde R)^{-1}.
\]
Hence
\[
\mathrm{Im}(\widetilde T)=\sqrt{\widetilde D}(2\widetilde R)^{-1}=\sqrt{D}R^{-1}=\mathrm{Im}(T),
\]
and
\begin{align*}
2n\mathrm{Re}(\widetilde T)R & =2PQ-(p^0q_0+\tr(PQ))E_3\\
& = \frac{2}{p^0}(P\widetilde R-\det(P)E_3) - \frac{1}{p^0}(\widetilde M-2\det(P))E_3\\
& = \frac{1}{p^0}(2P\widetilde R-\widetilde M E_3)\\
& = \frac{1}{p^0}(2(p^0S+ME_3)(2nR)^{-1}(nR)-ME_3)\\
& = S = 2n\mathrm{Re}(T)R
\end{align*}
since
$P\widetilde R=\det(P)E_3+p^0PQ$.
Therefore we have $\widetilde T=T$, so $T$ is an attractor. 
\end{proof}


\par\noindent{\scshape \small
Department of Mathematics, Evans Hall \\
University of California, Berkeley \\
Berkeley, CA 94720}
\par\noindent{\ttfamily ywfan@berkeley.edu}
\ \\
\par\noindent{\scshape \small
Faculty of Policy Management, Keio University \\
Endo 5322, Fujisawa, Kanagawa, 252-0882, Japan}
\par\noindent{\ttfamily  atsushik@sfc.keio.ac.jp}
\end{document}